\documentclass{article}%
\usepackage{amsmath}
\usepackage{amsfonts}
\usepackage{amssymb}
\usepackage{graphicx}
\usepackage{scalefnt}
\usepackage{tikz-cd}
\usepackage{adjustbox}%
\providecommand{\U}[1]{\protect\rule{.1in}{.1in}}
\newtheorem{theorem}{Theorem}
\newtheorem{acknowledgement}[theorem]{Acknowledgement}

\newtheorem{corollary}[theorem]{Corollary}

\newtheorem{definition}[theorem]{Definition}
\newtheorem{example}[theorem]{Example}

\newtheorem{lemma}[theorem]{Lemma}
\newtheorem{notation}[theorem]{Notation}

\newtheorem{proposition}[theorem]{Proposition}
\newtheorem{remark}[theorem]{Remark}

\newenvironment{proof}[1][Proof]{\noindent\textbf{#1.} }{\ \rule{0.5em}{0.5em}}
\begin{document}

\title{\bigskip Skeletal Homology}
\author{Ivy Dey and Conrad Plaut\\Department of Mathematics\\The University of Tennessee\\Knoxville TN 37996\\idey@vols.utk.edu, cplaut@utk.edu}
\date{}
\maketitle

\begin{abstract}
\textquotedblleft Skeletal homology\textquotedblright\ $K_{n}^{\varepsilon
}(X)$ refers to the homology of the chain complex $S_{n}^{\varepsilon}(X)$
generated by skeletal $(n,\varepsilon)$-simplices, i.e. functions from the
$0$-skeleton of the standard simplex into a metric space $X$, with image
diameter less than $\varepsilon>0$. This homology was previously defined by
Goldfarb, who showed that for finite metric spaces, it is isomorphic to the
simplicial homology $H_{n}^{\Delta}(VR_{\varepsilon}(X))$ of the VR complex
$VR_{\varepsilon}(X)$. We give a proof of this isomorphism for arbitrary
metric spaces, and introduce new methods to understand homology at scale. We
define an invariant metric on $S_{n}^{\varepsilon}(X)$, called the
ultradiamond metric, that extends the uniform metric on skeletal simplices,
such that the boundary map is $1$-Lipschitz. With this metric we prove that
\textquotedblleft close cycles are homologous\textquotedblright, which quickly
leads to a host of stability results for $K_{n}^{\varepsilon}(X)\equiv
H_{n}^{\Delta}(VR_{\varepsilon}(X))$. We modify methods from singular homology
to prove a strong generalization of Hausmann's Theorem, one of the two main
justifications to use $H_{n}^{\Delta}(VR_{\varepsilon}(X))$ as a proxy for
homology in discrete metric spaces. The second justification is Latchev's
Theorem, for which we also prove a strong generalization. We define a
homomorphism $\rho_{\varepsilon}:H_{n}(X)\rightarrow K_{n}^{\varepsilon}(X)$
induced by repeated barycentric subdivision and restriction, the image of
which we call \textquotedblleft real homology\textquotedblright\ $H_{n}%
^{\varepsilon}(X)$ at scale. We argue that $H_{n}^{\varepsilon}(X)$ better
represents \textit{bona fide} homology at scale than $H_{n}^{\Delta
}(VR_{\varepsilon}(X))$. To distinguish them, we define \textquotedblleft
phantom homology\textquotedblright\ to be $P_{n}^{\varepsilon}(X)=K_{n}%
^{\varepsilon}(X)/H_{n}^{\varepsilon}(X)$, and use the stability of
$K_{n}^{\varepsilon}(X)$ to show that in collapse of Riemannian manifolds
(e.g. the Berger Spheres), phantom homology can \textquotedblleft
anticipate\textquotedblright\ the abrupt drop in dimension that occurs in the limit.

\end{abstract}

\section{Introduction}

What is now known as the Vietoris-Rips (VR) complex was first introduced by
Vietoris in 1927 \cite{V}, and was later independently defined and used by
Rips. The VR complex $VR_{\varepsilon}(X)$ takes as its simplices at scale
$\varepsilon>0$ all subsets of a metric space $X$ of diameter less than
$\varepsilon$. For nearly a century, the VR complex has been the most widely
used way to understand the homology of metric spaces at a given scale, along
with other proxy complexes, such as the \v{C}ech complex. We will refer to
these methods generally as \textit{topology by simplicial proxy}. Restating
the obvious, these methods are both \textit{indirect} and \textit{simplicial}.

Because topology by simplicial proxy is \textit{indirect}, some justification
is needed for the idea that these constructions actually \textquotedblleft
see\textquotedblright\ the topology of the ambient metric space. There are two
important justifications for this idea. The first is Hausmann's Theorem
(\cite{Hau}), which says that for a compact Riemannian manifold, at a small
enough scale, the simplicial homology of the VR complex (which will abbreviate
as the VR homology) is isomorphic to the homology of the ambient manifold. The
second is Latschev's Theorem, which says that given a compact Riemannian
manifold $M$ and a particular scale $\varepsilon>0$, there exists some
$\delta_{\varepsilon}>0$ such that if $X$ is a compact metric space with
Gromov-Hausdorff $d_{GH}(X,M)<\delta_{\varepsilon}$ then $X$ and $M$ have the
same VR homology. Another issue arising from indirectness is that the proxy
complexes tend to be huge; for example, the vertex set of the VR complex is
equal to the set of points in the space. This can create computational
difficulties for large data sets. For any non-trivial path-connected metric
space, the VR complex has uncountably many simplices in every dimension.

Because these methods are \textit{simplicial}, they share the same
deficiencies that motivated the wholesale shift from simplicial homology to
singular homology in algebraic topology. One of the most significant issues is
the fact that a continuous map between triangulated spaces may not be
simplicial, i.e., functorality is problematic. This issue manifests itself in
the study of VR-complexes because if $f:X\rightarrow Y$ is a function between
metric spaces, there may be no induced map on the VR complexes unless $f$ is,
say, $1$-Lipschitz. Yet many modern problems in geometry rely on functions
that are only \textquotedblleft continuous relative to scale\textquotedblright%
, such as quasi-isometries.

Some alternatives to topology by simplicial proxy have been considered in this
century, in chronological order: Discrete homotopy theory for the fundamental
group due to Berestovskii-Plaut-Wilkins, (\cite{BPTG}, \cite{BPUU},
\cite{PW1}, \cite{PW2}); discrete methods for Hodge and de Rham cohomology due
to Bartholdi-Schick-Smale-Smale (\cite{BSSS}); homology at scale in the
context of computation of finite data spaces due to Goldfarb (\cite{G});
homotopy at scale via \v{C}ech closure spaces due to Rieser (\cite{Rieser2021}%
); nine singular homology theories for closure spaces, including three
simplicial--cubical pairs, yielding six distinct theories in general due to
Bubenik-Mili\'{c}evi\'{c} (\cite{BM}); and additional approaches to homotopy
at scale due to M\'{e}moli-Zhou (\cite{MZ}). In this paper, we focus on
Goldfarb's approach, which is also one of the homologies described by
Bubenik-Mili\'{c}evi\'{c}. We will refer to this homology as \textit{skeletal
homology} to emphasize what we regard as its most important feature:\ the
finite domain of definition of the simplices.

For $n\in\mathbb{N}$ and $\varepsilon>0$, an $(n,\varepsilon)$%
-\textit{skeletal simplex }is function $\sigma$ from the $0$-skeleton $V^{n}$
of the standard simplex $\Delta^{n}$ into a metric space $X$, such that the
diameter of the image of $\sigma$ is less than $\varepsilon$. On the free
abelian group $S_{n}^{\varepsilon}(X)$ generated by $(n,\varepsilon)$-skeletal
simplices, a boundary map may be defined by restricting the usual boundary map
to $0$-skeleta. That is, $\partial_{\varepsilon}\sigma=\sum_{j=0}^{n}%
(-1)^{j}\sigma^{j}$, where $\sigma^{j}$ is the restriction of $\sigma$ to the
$0$-skeleton of the appropriate face. With this notion of boundary, the groups
$S_{n}^{\varepsilon}(X)$ form what we will call the \textit{skeletal chain
complex}. The homology of this chain complex will be called \textit{skeletal
homology }$K_{n}^{\varepsilon}(X)$. For $0<\delta<\varepsilon$, the inclusion
maps $i_{\varepsilon\delta}:S_{n}^{\delta}(X)\rightarrow S_{n}^{\varepsilon
}(X)$ are chain maps that induce change-of-scale homomorphisms $j_{\varepsilon
\delta}:K_{n}^{\delta}(X)\rightarrow K_{n}^{\varepsilon}(X)$, which together
form an inverse system. Equivalently, these maps form what is called a
\textit{persistence module} by those working in applied topology and
topological data analysis. We will use the language of persistence modules
when citing theorems using that language.

Goldfarb showed (\cite{G}) that for a \textit{finite} metric space $X$,
$K_{n}^{\varepsilon}(X)$ is isomorphic to the simplicial homology
$H_{n}^{\Delta}(VR_{\varepsilon}(X))$; see also \cite{MS} for an alternative
proof.  We generalize this result to arbitrary metric spaces (Theorem
\ref{VR}). Our proof is analogous to the classical proof of the equivalence of
simplicial homology of a simplicial complex $C$ with the singular homology of
its underlying topological space $\left\vert C\right\vert $. That proof proceeds by
induction on the simplicial $k$-skeleta of $C$, using the long exact sequence
for pairs from the $k$-skeleta. In our case, the role of the \textquotedblleft
underlying space\textquotedblright\ is played by $X$, which generally has no
simplicial structure. Therefore, we define a long exact sequence of
\textquotedblleft fake skeletal pairs\textquotedblright\ (\ref{2rr}) involving
the cardinalities of the images of skeletal simplices (Definition
\ref{fakepair}). We mention this detail because it is one of many instances in
which the finiteness of the domains of skeletal simplices plays an essential
role in our work. Our proof also reveals additional information about the
generators of $K_{n}^{\varepsilon}(X)$ that may have computational
consequences for finite data spaces (Remark \ref{VRR}).

There is a natural homomorphism $\rho_{\varepsilon}:H_{n}(X)\rightarrow
K_{n}^{\varepsilon}(X)$ from singular homology to skeletal homology, for all
$\varepsilon>0$. The map is induced by repeated barycentric subdivision
followed by the restriction of each continuous simplex to its $0$-skeleton.
The map $\rho_{\varepsilon}$ is the link between singular homology and
skeletal homology, and plays a fundamental role in this paper. We use it to
state and prove the following strong generalization of Hausmann's Theorem:

\begin{theorem}
\label{HM}Let $X$ be a metric space that is uniformly locally contractible in
the sense that there exists some $\psi_{0}>0$ such that for every
$0<r<\psi_{0}$, the open metric ball $B(x,r)$ in $X$ is contractible. Then for
every $n$, if $0<\varepsilon<2^{-(n+1)}\psi_{0}$ the map $\rho_{\varepsilon
}:H_{n}(X)\rightarrow K_{n}^{\varepsilon}(X)$ is an isomorphism.
\end{theorem}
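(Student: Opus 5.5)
The strategy is to build a chain map in the opposite direction, $\phi:S_{n}^{\varepsilon}(X)\rightarrow S_{n}(X)$ into singular chains, by filling skeletal simplices using the contractibility of small balls. We then show that $\phi$ induces an inverse of $\rho_{\varepsilon}$ on homology. This is the classical "acyclic carrier" or "filling" argument. It is modeled on the proof that singular homology can be computed with small simplices, with the contractible balls replacing convexity.

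\textbf{Step 1 (the filling map $\phi$).} We define $\phi$ by induction on the dimension $k$, together with a control on diameters. Write $\psi_{k}=2^{-(n+1)+k}\psi_{0}$, or an equivalent geometric sequence. We require that for every $(k,\varepsilon)$-skeletal simplex $\sigma$, the singular chain $\phi(\sigma)$ is a single singular simplex with vertex values $\sigma$, whose image lies in a ball $B(\sigma(v_{0}),r_{k})$ with $r_{k}<\psi_{0}$.

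\textbf{Step 2 (construction of $\phi$).}
\begin{itemize}
\item For $k=0$, let $\phi$ be the identity on points.
\item Given $\phi$ on the faces of a $(k,\varepsilon)$-skeletal simplex $\sigma$, the faces glue to a continuous map $\partial\Delta^{k}\rightarrow X$, since faces are defined compatibly by restriction.
\item Each face lies in a ball of radius $r_{k-1}$ about one of its vertices, and all vertices are within $\varepsilon$ of $\sigma(v_{0})$. Hence the whole boundary sits in $B(\sigma(v_{0}),r_{k-1}+\varepsilon)$.
\item Set $r_{k}=2r_{k-1}+\varepsilon$, or a similar recursion. This ball is contractible provided the radius is less than $\psi_{0}$. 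The contraction extends the boundary map to $\Delta^{k}$ inside the ball, by coning along the contraction.
\end{itemize}
Solving the recursion gives $r_{k}\le(2^{k}-1)\varepsilon$. So for $k\le n+1$ we need roughly $2^{n+1}\varepsilon<\psi_{0}$, which is exactly the hypothesis. We need dimension $n+1$ as well, to handle boundaries and homotopies. By construction $\partial\phi=\phi\partial_{\varepsilon}$, so $\phi$ is a chain map, and restricting $\phi(\sigma)$ to the $0$-skeleton returns $\sigma$.

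\textbf{Step 3 (the composite $\rho_{\varepsilon}\circ\phi_{*}$).} If $r$ denotes restriction to vertices, then $r\circ\phi=\mathrm{id}$. However, $\rho_{\varepsilon}$ first subdivides repeatedly. So we must show that restricting the subdivision $\mathrm{Sd}^{m}\phi(c)$ is homologous to $c$ in $S^{\varepsilon}$. To do this, construct a skeletal chain homotopy between $r\circ\mathrm{Sd}^{m}$ and $r$ on singular chains whose simplices have small image. We use the standard prism operator for $\mathrm{Sd}$, restricted to skeleta. Its skeletal simplices have vertices in the image of a single singular simplex, which here has diameter below $2r_{n}$, so we need this diameter to be under $\varepsilon$. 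That need not hold, which is the first subtlety. The alternative is to note that $r\circ\mathrm{Sd}^{m}\circ\phi$ is a chain map $S^{\varepsilon}\rightarrow S^{\varepsilon}$ that is "carried" by small sets. We then prove it is chain homotopic to the identity by an acyclic-carrier argument inside the skeletal complex itself, where any finite set of diameter $<\varepsilon$ is a cone and hence acyclic. I would try this acyclic-carrier route first.

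\textbf{Step 4 (the composite $\phi\circ\rho_{\varepsilon}$).} For singular cycles we use $H_{n}(X)\cong H_{n}^{\mathcal{U}}(X)$ for the cover $\mathcal{U}$ by small balls. A sufficiently subdivided cycle $\mathrm{Sd}^{m}z$ consists of simplices in balls of radius less than $\varepsilon/2$. For each such singular simplex $\tau$, the simplex $\tau$ and $\phi(r\tau)$ both lie in a common contractible ball. Build a singular chain homotopy between $\mathrm{id}$ and $\phi\circ r$ on small chains, by induction on dimension, with fillings in contractible balls. Equivalently, invoke the acyclic carrier theorem with carriers given by singular chains in contractible balls.

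The main obstacle is the radius bookkeeping. Each inductive filling step roughly doubles the radius, and the homotopies in Steps 3 and 4 live one dimension higher, so the constraint must close up exactly at $2^{-(n+1)}\psi_{0}$. I would organize every filling as taking place in a ball centered at the first vertex, with radius governed by the recursion $r_{k}=2r_{k-1}+\varepsilon$. I would then check that all constructions up to dimension $n+1$ stay below $\psi_{0}$.
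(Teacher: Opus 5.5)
Your Steps 1, 2 and 4 match the paper. Your $\phi$ is its filling chain map $F$: you glue the face fillings and cone once, while the paper cones the chain $F_{i-1}(\partial\sigma)$. Your Step 4 is its injectivity argument via the maps $P_q$ with $\partial P_q+P_{q-1}\partial=\mathrm{id}-F_q r_\#$.

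\textbf{The gap is Step 3, i.e.\ surjectivity of $\rho_\varepsilon$, and it is not a bookkeeping issue.} In fact, since you contract inside the ball $B(\sigma(v_0),r_{k-1}+\varepsilon)$ and the contraction stays in that ball, $r_k=r_{k-1}+\varepsilon$ already suffices. Your acyclic-carrier fix does not work. A carrier for $\sigma$ must contain every vertex of $r\,\mathrm{Sd}^m\phi(\sigma)$. These vertices are spread over $\mathrm{im}\,\phi(\sigma)$, which can have diameter well above $\varepsilon$. So $r\circ\mathrm{Sd}^m\circ\phi$ is \emph{not} carried by sets of diameter $<\varepsilon$.

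For $n=1$ you can still cone. The filling path of $[x_0,x_1]$ lies in $B(x_0,\varepsilon)$, so for finely spaced points $y_j$ along it, every $[x_0,y_j,y_{j+1}]$ is an $\varepsilon$-simplex. Hence $[x_0,x_1]-r\,\mathrm{Sd}^m\phi([x_0,x_1])$ is an $\varepsilon$-boundary.

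For $n\ge 2$ this breaks. The filling of the face opposite $v_0$ lives near $\sigma(v_1)$ and can leave $B(\sigma(v_0),\varepsilon)$. In general no point of $X$ is within $\varepsilon$ of all of $\mathrm{im}\,\phi(\sigma)$, so no cone is available. Acyclicity at scale $\varepsilon$ of the skeletal complex of such a set is a localized form of the very theorem you are proving. Contractibility of balls only produces \emph{continuous} fillings, and their restrictions must again be compared with the original by an $\varepsilon$-chain. That comparison is exactly what is missing.

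The paper handles this step with $B=r_\#\circ D\circ F$, where $D$ is the small-simplices subdivision homotopy and $R=\mathbb{I}-\partial D-D\partial$. This is the prism route you rejected, and your objection applies to it verbatim. For a simplex $\tau$ of $F(\sigma)$ that is not $\varepsilon$-small, $D\tau$ contains $T\tau$, whose simplices have diameter up to $\operatorname{diam}\tau<2^{n+1}\varepsilon$. So $B$ takes values in $S_{n+1}^{2^{n+1}\varepsilon}(X)$, not in general in $S_{n+1}^{\varepsilon}(X)$. As written, that argument only gives $j_{2^{n+1}\varepsilon,\varepsilon}(K_n^\varepsilon(X))\subseteq H_n^{2^{n+1}\varepsilon}(X)$, not $K_n^\varepsilon(X)=H_n^\varepsilon(X)$. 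The paper therefore does not supply the missing ingredient either.

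To close your argument you need an additional lemma that moves between scales without loss. Two candidates:
\begin{itemize}
\item a local acyclicity statement: every $\varepsilon$-cycle supported in a ball of radius $<\psi_0$ bounds an $\varepsilon$-chain in a somewhat larger ball;
\item surjectivity of $j_{\varepsilon\delta}$ for $\delta<\varepsilon$ in this range.
\end{itemize}
With the second, run the paper's homotopy at scale $\delta=2^{-(n+1)}\varepsilon$. There all fillings have diameter $<\varepsilon$, so $B$ does land in $S_{n+1}^{\varepsilon}(X)$, and this finishes the proof.
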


We note that Hausmann's Theorem is an immediate consequence of a stronger
theorem by him, namely that a compact Riemannian manifold has the same
homotopy type as the underlying space of its VR complex at small enough
scales. On the one hand, it is unclear to us whether this stronger theorem is
true under the much weaker assumption of Theorem \ref{HM}. On the other hand,
one of the goals of this paper is to show that by avoiding proxy complexes
altogether, one can obtain stronger results about homology at scale with much
simpler arguments, often reminiscent of those in classical singular homology.
Indeed, our proof of Theorem \ref{HM} takes about two pages. We use the
uniform local contractibility to \textquotedblleft fill in\textquotedblright%
\ skeletal simplices, and barycentric subdivision to reduce the size of the
resulting continuous cycles. As in many classical arguments, this process is
algebraically formalized by suitable chain homotopies.

Hausmann's original theorem is qualitative in the sense that it asserts the
existence of a scale below which $H_{n}^{\Delta}(VR_{\varepsilon}(X))\equiv
H_{n}(X)$. It has since been observed (\cite{Ma}) that for a compact
Riemannian manifold, this isomorphism exists for any $\varepsilon>0$ less than
the convexity radius of $M$. We believe that our estimate can be improved
for Riemannian manifolds through a careful consideration of the fillings that arise from the
exponential map, but we will not consider this here. In fact we prove a more
general theorem (Theorem \ref{HM2}) for spaces that may not be uniformly
locally contractible, such as some metric cones, that have a bounded
\textquotedblleft contractibility ratio\textquotedblright\ (Definition
\ref{fcr}). In a very recent preprint, Hausmann's theorem was extended to
spaces of curvature bounded above by $\kappa$ (CBA($\kappa$) spaces) by
Oudot-Wass (\cite{O2}). Since compact CBA spaces are uniformly locally
contractible, Theorem \ref{HM} also implies this result. Moreover, in a
CBA($\kappa$) space with an upper curvature bound, it is not hard to argue
inductively that skeletal simplices in a neighborhood in which Alexandrov's
comparisons are valid have continuous extensions of the same diameter. This
means that our short proof of Theorem \ref{HM2} can be modified to show that
$\rho_{\varepsilon}:H_{n}(X)\rightarrow K_{n}^{\varepsilon}(X)$ is true when
$\varepsilon>0$ is less than any uniform \textquotedblleft comparison radius"
of a CBA($\kappa$) space. We leave the details to the reader. As we will
explain later, Theorem \ref{HM} also implies (at the level of homology) the
versions of Hausmann's Theorem in \cite{AA2} and \cite{AM}.

In \cite{LM}, Lim-M\'{e}moli-Okutana conjectured (Conjecture 9.5) that
Hausmann's Theorem could be generalized to compact metric ANRs. We will not
recall the definition of ANR here since we do not need it. Gillespie showed
(\cite{Gi}) that there are compact, locally contractible geodesic spaces
(which are known to be ANRs) for which Hausmann's Theorem fails, disproving
Conjecture 9.5. But as is easily checked, Gillespie's examples are not
\textit{uniformly} locally contractible, so they do not contradict Theorem
\ref{HM}.

We next define an invariant ultrametric on the skeletal chain complex, called
the \textit{ultradiamond metric }(Definition \ref{udm}), which extends the
uniform metric on skeletal simplices, and has the property that the boundary
map is $1$-Lipschitz. Here again, the finiteness of the domains of skeletal
simplices is essential. With this metric, we prove a stability result at the
level of cycles: \textquotedblleft close cycles are
homologous\textquotedblright\ (Theorem \ref{bdy}). This is the first instance
of a major theme of this paper, namely that \textquotedblleft
homotopic\textquotedblright\ in singular homology often corresponds to
\textquotedblleft close\textquotedblright\ in skeletal homology. All of our
stability-type theorems follow relatively easily from this chain-level stability.

As for functorality, given a function $f:X\rightarrow Y$ between metric
spaces, possibly not continuous, we may define an induced chain map on the
skeletal chain complex, hence on skeletal homology, in the usual way. The
difference here is that the scales of the domain and range are determined by
an extended notion of\textit{ modulus of continuity} (Definition \ref{moc} and
Remark \ref{mc}). We may then define concepts such as (possibly not
continuous) $\delta$-homotopy equivalence (Definition \ref{dhi} and see also
\cite{BM}) and $\delta$-deformation retraction between metric spaces.

Latschev-type theorems are essentially a combination of a Hausmann-type
theorem and a stability theorem for homology at scale. In fact, Oudot-Wass
\cite{OW} give a formal process for deriving Latschev-type theorems, which may
be applicable to our results. However, because the proofs are short and
self-contained, we directly show a generalization of Latschev's theorem in our
setting (Theorem \ref{qlatschev}) for uniformly locally contractible spaces.

As Majhi commented in \cite{Ma}, Latschev's theorem \textquotedblleft has been
regarded as a stepping stone to the finite reconstruction of an abstract
Riemannian manifold from a noisy sample\textquotedblright. However, the
assumption that the underlying space is a Riemannian manifold seems a bit
strong. For example, what if the sampled space happens to be a graph, which is
probably more easily analyzed than some approximation of it by a Riemannian
manifold? Theorem \ref{qlatschev} applies to a much wider class of spaces,
including geodesic graphs.

By Theorem \ref{HM}, for a uniformly locally contractible space, singular
homology and skeletal homology are isomorphic at small enough scales. The
natural question arises: how are skeletal and singular homology related at
larger scales? To understand the difference between these homologies, we let
$H_{n}^{\varepsilon}(X)$ denote the image $\rho_{\varepsilon}(H_{n}(X))\subset
K_{n}^{\varepsilon}(X)$, which we call the \textit{real homology} of $X$ at
the scale of $\varepsilon>0$ (Definition \ref{rf}). We define \textit{phantom
homology} to be $P_{n}^{\varepsilon}(X)=K_{n}^{\varepsilon}(X)/H_{n}%
^{\varepsilon}(X)$. Phantom homology is not necessarily a bad thing. For
example, discrete metric spaces have only phantom homology for $n\geq1$.
Phantom homology can also \textquotedblleft anticipate\textquotedblright\ the
abrupt changes in singular homology that can occur in Gromov-Hausdorff
convergence (Example \ref{berg}). But if one is interested in understanding
\textit{bona fide }homology at scale, we believe that $H_{n}^{\varepsilon}(X)$
provides a more faithful description than $H_{n}^{\Delta}(VR_{\varepsilon
}(X))$--see Diagram (\ref{rhdia}) and the subsequent comments.

\begin{remark}
Theorems \ref{VR} and \ref{HM} are stated for coefficients in $\mathbb{Z}$,
but the proofs are the same when the coefficients are in any abelian group.
However, the definition of the ultradiamond metric, which uses a modified
Cayley graph, requires that the coefficient group be cyclic. This includes the
important case of $\mathbb{Z}/(2)$, which is heavily used in computation. For
simplicity, our statements and proofs are for coefficients in $\mathbb{Z}$. We
describe the necessary modifications of Theorem \ref{bdy} for finite cyclic
coefficients in Remark \ref{cyclic}.
\end{remark}

Despite the issues that we have described, topology by simplicial proxy has
played a significant role in applied topology and topological data analysis
(\cite{EDEL}, \cite{CV}, among many good general references), as well as
metric geometry and geometric group theory (\cite{AA}, \cite{AA2}, \cite{AFV},
\cite{LM}, \cite{BCM}). We believe that the methodology of skeletal homology
may have additional applications in these areas of mathematics. For example,
the fact that skeletal simplices have finite domains suggests that there may
be relatively efficient ways to directly calculate it for finite metric
spaces, including data. We explore these approaches in \cite{SH2}. We list
some open questions at the end of the next section.

\section{Main Results}

To have a common reference point for classical singular homology, we will
generally use adaptations of the notation and conventions in Hatcher's book
\cite{H}. Let $X$ be a metric space. The \textit{skeletal chain complex} for
$\varepsilon>0$ consists of
\[
\cdots\longrightarrow S_{n+1}^{\varepsilon}%
(X)\xrightarrow{\partial_{\varepsilon}}S_{n}^{\varepsilon}%
(X)\xrightarrow{\partial_{\varepsilon}}\cdots
\xrightarrow{\partial_{\varepsilon}}S_{0}^{\varepsilon}(X)\longrightarrow0
\]
where each $S_{n}^{\varepsilon}(X)$ is generated by skeletal simplices
$\sigma:V^{n}\rightarrow X$ such that the diameter of the image is less than
$\varepsilon$. The boundary $\partial_{\varepsilon}$ is defined by letting
$\sigma^{j}$ be the restriction of $\sigma$ to the subset $\{v_{0}%
\ldots,\widehat{v_{j}},\ldots,v_{n}\}$, and letting $\partial_{\varepsilon}$
be the linear extension to $S_{n+1}^{\varepsilon}(X)$ of $\partial
_{\varepsilon}\sigma=\sum_{j=0}^{n}(-1)^{j}\sigma^{j}$. Strictly speaking, in
order for the face $\sigma^{j}$ to be a skeletal $\varepsilon$-simplex, it
must be pre-composed with the order-preserving linear map $i_{j}$ from the
$0$-skeleton of standard simplex $\Delta^{n-1}$ to the face that is the domain
of $\sigma^{j}$ (see Hatcher's comment on p. 108). Including this
pre-composition in the notation, i.e. $\partial\sigma=\sum_{j=0}^{n}%
(-1)^{j}\sigma^{j}\circ i_{j}$, complicates the exposition and is inessential
to the proofs. The fact that $\partial_{\varepsilon}^{2}=0$ follows from the
standard argument, which is purely combinatorial and involves restrictions to
the faces $\sigma^{j}$, which are $(\varepsilon,n-1)$-skeletal simplices. In
fact, the purely \textquotedblleft combinatorial/algebraic
parts\textquotedblright\ of proofs from classical singular homology often
carry over to this setting.

We will refer to the elements of $S_{n}^{\varepsilon}(X)$ as $\varepsilon
$\textit{-chains} and elements of the kernel of $\partial_{\varepsilon}$ as
$\varepsilon$\textit{-cycles}. Except where needed for clarity, we will
generally denote $\partial_{\varepsilon}$ by $\partial$. The skeletal homology
class of an $\varepsilon$-cycle $c$ will be denoted by $[c]_{\varepsilon}\in
K_{n}^{\varepsilon}(X)$. In \cite{PW1} the term \textquotedblleft$\varepsilon
$-chain\textquotedblright\ and notation $[\lambda]_{\varepsilon}$ refer to
concepts related to the groups $\pi_{\varepsilon}(X,\ast)$, which we will
discuss in more detail later. However, the two contexts are entirely
different, and there should be no confusion. Also, for simplicity, we will
only prove our theorems for $n\geq1$, except for a couple of inductive proofs
that begin with $0$-simplices. $S_{0}^{\varepsilon}(X)$ is the free abelian
group generated by constant maps, and $K_{0}^{\varepsilon}(X)=H_{0}%
^{\varepsilon}(X)$ is the free abelian group generated by the so-called
\textquotedblleft$\varepsilon$-components\textquotedblright. The exposition of
singular $0$-homology readily carries over to skeletal $0$-homology (\cite{DD}).

For every $n$ and $0<\delta<\varepsilon$, the inclusion of $S_{n}^{\delta}(X)$
into $S_{n}^{\varepsilon}(X)$ is a chain map that induces a \textquotedblleft
change of scale\textquotedblright\ homomorphism $j_{\varepsilon\delta}%
:K_{n}^{\delta}(X)\rightarrow K_{n}^{\varepsilon}(X)$. Explicitly,
$j_{\varepsilon\delta}([c]_{\delta})=[c]_{\varepsilon}$. Note that if $c$ is a
$\delta$-cycle then $c$ is also an $\varepsilon$-cycle for any $\varepsilon
>\delta$, and it is possible that $c$, considered as an $\varepsilon$-cycle,
is the boundary of an $\varepsilon$-chain. This explicitly means that
$[c]_{\delta}\in\ker j_{\varepsilon\delta}$ and we will call $c$ an
$\varepsilon$-boundary.

We will refer to the singular homology groups of $X$ as $H_{n}(X)$. Since we
are working with simplices that are even more \textquotedblleft
singular\textquotedblright\ than continuous ones, we will often simply use the
adjective \textquotedblleft continuous\textquotedblright\ when referring to
classical singular simplices and chains. The singular homology class of a
continuous cycle will be denoted by $[c]$. For every $\varepsilon>0$, let
$H_{S,n}^{\varepsilon}(X)$ denote the homology of the chain complex
$C_{S,n}^{\varepsilon}(X)$, where the generators of $C_{S,n}^{\varepsilon}(X)$
are continuous simplices having images of diameter less than $\varepsilon$. An
unsurprising tweak of a classical result (Theorem \ref{rh}) says the
inclusion-induced map $\beta_{\varepsilon}:H_{S,n}^{\varepsilon}(X)\rightarrow
H_{n}(X)$ is an isomorphism. That is, restricting the \textquotedblleft
size\textquotedblright\ of continuous simplices produces nothing new in
singular homology. A key to understanding the relationship between singular
homology and skeletal \ homology is the restriction map $r$, defined for any
continuous simplex $\sigma$ by $r(\sigma)=\sigma\mid V^{n}$. For any fixed
$\varepsilon>0$, $r$ extends to a chain map $r_{\varepsilon}:C_{S,n}%
^{\varepsilon}(X)\rightarrow S_{n}^{\varepsilon}(X)$, which induces a
homomorphism $r_{\varepsilon}^{\ast}:H_{S,n}^{\varepsilon}(X)\rightarrow
K_{n}^{\varepsilon}(X)$.

\begin{definition}
\label{rf}We denote the map $r_{\varepsilon}^{\ast}\circ\beta_{\varepsilon
}^{-1}$ by $\rho_{\varepsilon}:H_{n}(X)\rightarrow K_{n}^{\varepsilon}(X)$.
(As is often done in classical homology and as we will do throughout this
paper, to avoid notational clutter, the \textquotedblleft$n$\textquotedblright%
\ in many expressions will often be suppressed, as will an occasional
\textquotedblleft$\varepsilon$\textquotedblright\ in designation of induced
maps and the boundary.) The image $H_{n}^{\varepsilon}(X)$ of $\rho
_{\varepsilon}$ is called the \textit{real homology} of $X$ (at scale
$\varepsilon$). The quotient group $P_{n}^{\varepsilon}(X)=K_{n}^{\varepsilon
}(X)/H_{n}^{\varepsilon}(X)$ is called the \textit{phantom homology of }$X$.
\end{definition}

\begin{theorem}
\label{cpd}For every $n$ and $0<\delta<\varepsilon$ we have the following
commutative diagram
\begin{equation}
\begin{tikzcd}[column sep=2.0em, row sep=2.0em] 0 \arrow[r] & H_{n}^{\delta}(X) \arrow[r, "i_{\delta}"] \arrow[d, "\eta_{\varepsilon\delta}"'] & K_{n}^{\delta}(X) \arrow[r, "\pi_{\delta}"] \arrow[d, "j_{\varepsilon\delta}"'] & P_{n}^{\delta}(X) \arrow[r] \arrow[d, "\phi_{\varepsilon\delta}"] & 0 \\ 0 \arrow[r] & H_{n}^{\varepsilon}(X) \arrow[r, "i_{\varepsilon}"'] & K_{n}^{\varepsilon}(X) \arrow[r, "\pi_{\varepsilon}"'] & P_{n}^{\varepsilon}(X) \arrow[r] & 0 \end{tikzcd} \label{is}%
\end{equation}
where each horizontal sequence is exact, and the vertical (\textquotedblleft
change of scale\textquotedblright) maps form an inverse system indexed on the
real numbers with order reversed. Here $\pi_{\varepsilon}$ and $\pi_{\delta}$
are the quotient maps, $i_{\varepsilon}$ and $i_{\delta}$ are inclusion maps
and the maps $\phi_{\varepsilon\delta}$ are induced by $j_{\varepsilon\delta}%
$. Moreover, the maps $\eta_{\varepsilon\delta}$ are surjective.
\end{theorem}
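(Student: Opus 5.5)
The plan is to establish one key identity, $j_{\varepsilon\delta}\circ\rho_{\delta}=\rho_{\varepsilon}$ as maps $H_{n}(X)\rightarrow K_{n}^{\varepsilon}(X)$; everything else in the statement is formal algebra. Exactness of each row is immediate, since $H_{n}^{\varepsilon}(X)$ is by Definition \ref{rf} a subgroup of $K_{n}^{\varepsilon}(X)$, $i_{\varepsilon}$ is its inclusion, and $P_{n}^{\varepsilon}(X)$ is the quotient with projection $\pi_{\varepsilon}$.

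To prove the identity, I would compare the chain complexes $C_{S,n}^{\delta}(X)\subset C_{S,n}^{\varepsilon}(X)$ and $S_{n}^{\delta}(X)\subset S_{n}^{\varepsilon}(X)$. The restriction maps commute with these inclusions on the nose: a continuous simplex of diameter $<\delta$ restricts to the same skeletal simplex whether we view it at scale $\delta$ or $\varepsilon$. So, writing $k$ for the map $H_{S,n}^{\delta}(X)\rightarrow H_{S,n}^{\varepsilon}(X)$ induced by inclusion, we get $j_{\varepsilon\delta}\circ r_{\delta}^{\ast}=r_{\varepsilon}^{\ast}\circ k$. Also $\beta_{\varepsilon}\circ k=\beta_{\delta}$, since both are induced by inclusions into $C_{n}(X)$. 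By Theorem \ref{rh} both $\beta$'s are isomorphisms, so $k=\beta_{\varepsilon}^{-1}\beta_{\delta}$. Hence
\[
j_{\varepsilon\delta}\rho_{\delta}=j_{\varepsilon\delta}r_{\delta}^{\ast}\beta_{\delta}^{-1}=r_{\varepsilon}^{\ast}k\beta_{\delta}^{-1}=r_{\varepsilon}^{\ast}\beta_{\varepsilon}^{-1}=\rho_{\varepsilon}.
\]
This commutation of restriction with change of scale is the only step needing care, and it is really bookkeeping.

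From the identity, $j_{\varepsilon\delta}(H_{n}^{\delta}(X))=j_{\varepsilon\delta}\rho_{\delta}(H_{n}(X))=\rho_{\varepsilon}(H_{n}(X))=H_{n}^{\varepsilon}(X)$. So $\eta_{\varepsilon\delta}$, defined as the restriction of $j_{\varepsilon\delta}$ to $H_{n}^{\delta}(X)$, lands in $H_{n}^{\varepsilon}(X)$ and is surjective, and the left square commutes by construction. Since $j_{\varepsilon\delta}$ maps $H_{n}^{\delta}(X)$ into $H_{n}^{\varepsilon}(X)$, it descends to a well-defined $\phi_{\varepsilon\delta}:P_{n}^{\delta}(X)\rightarrow P_{n}^{\varepsilon}(X)$ with $\phi_{\varepsilon\delta}\pi_{\delta}=\pi_{\varepsilon}j_{\varepsilon\delta}$, which gives commutativity of the right square.

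Finally, for the inverse-system property, the $j$'s satisfy $j_{\varepsilon\delta}j_{\delta\gamma}=j_{\varepsilon\gamma}$ and $j_{\varepsilon\varepsilon}=\mathrm{id}$, because the underlying chain inclusions compose. The $\eta$'s are restrictions of the $j$'s, so they inherit these relations. The $\phi$'s also inherit them, because the quotient maps $\pi_{\delta}$ are surjective and so a map out of $P_{n}^{\delta}(X)$ is determined by its composite with $\pi_{\delta}$.
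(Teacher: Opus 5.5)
Your proof is correct and follows the same formal route as the paper's: the rows are trivially exact, the squares commute by definition, and the inverse-system property comes from composing chain inclusions. The one place you go beyond the paper is the identity $j_{\varepsilon\delta}\circ\rho_{\delta}=\rho_{\varepsilon}$, which you derive from the compatibility of restriction with inclusion together with Theorem~\ref{rh}. The paper's proof simply takes for granted that $j_{\varepsilon\delta}$ carries $H_{n}^{\delta}(X)$ into $H_{n}^{\varepsilon}(X)$, which is needed for $\eta_{\varepsilon\delta}$ and $\phi_{\varepsilon\delta}$ to be well defined, and it never argues the claimed surjectivity of $\eta_{\varepsilon\delta}$; your identity supplies both.
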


Recall that by definition, a collection of maps $\{f_{\varepsilon\delta}\}$ is
an inverse system if $f_{\varepsilon\sigma}=f_{\varepsilon\delta}\circ
f_{\delta\sigma}$ whenever $0<\sigma<\delta<\varepsilon$. We choose to express
the maps as an inverse system with parameters going from large to small, as
opposed to a direct system with parameters going from small to large, because
for path-connected spaces, as $\varepsilon\rightarrow0$, more and more
homology \textquotedblleft becomes visible\textquotedblright. For spaces of
infinite topological type, such as the Hawaiian Earring, the inverse limit may
be of interest as a kind of \textquotedblleft shape
homology\ group\textquotedblright. As previously mentioned, these vertical
sequences also comprise\textit{ persistence modules, }which are functors from
partially ordered sets into systems of algebraic objects with precisely the
same compatibility condition as an inverse limit. We will use the inverse
limit and persistence module notations interchangeably, depending on the context.

The above diagram shows that skeletal homology is a bridge between classical
singular homology (on the left) and homology of discrete spaces, which have
only phantom homology (on the right) for $n\geq1$. One can now clearly see the
difference between $H_{n}^{\varepsilon}(X)$ and $H_{S,n}^{\varepsilon}(X)$.
Both groups are generated by cycles with continuous representatives, but for
$H_{n}^{\varepsilon}(X)$ the chains that cycles may bound are permitted to be
skeletal, and therefore do not \textquotedblleft see topological holes smaller
than the given scale\textquotedblright. The link between singular homology and
homology of discrete spaces is strengthened by the following theorem. If $X$
is a metric space and $\delta>0$, we denote the Vietoris-Rips complex of $X$
at the scale of $\delta$ by $VR_{\delta}(X)$.

\begin{theorem}
\label{VR}If $X$ is any metric space, then for all $n$ and $\varepsilon>0$,
there is a natural isomorphism $\kappa_{\ast}:H_{n}^{\Delta}(VR_{\varepsilon
})\rightarrow K_{n}^{\varepsilon}(X)$ from the simplicial homology of
$VR_{\varepsilon}(X)$ to the skeletal homology of $X$ such that the following
diagram commutes for all $n$ and $0<\delta<\varepsilon$:
\begin{equation}
\begin{tikzcd}[column sep=2.0em, row sep=2.0em] H_{n}^{\Delta}(VR_{\delta}) \arrow[r, "f_{\varepsilon\delta}"] \arrow[d, "\kappa_{\ast}"'] & H_{n}^{\Delta}(VR_{\varepsilon}) \arrow[d, "\kappa_{\ast}"] \\ K_{n}^{\delta}(X) \arrow[r, "j_{\varepsilon\delta}"] & K_{n}^{\varepsilon}(X) \end{tikzcd} \label{dr}%
\end{equation}
Put another way, the maps $\kappa_{\ast}$ comprise an isomorphism of
persistence modules.
\end{theorem}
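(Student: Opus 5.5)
We construct a chain map from the simplicial chains of $VR_{\varepsilon}(X)$ to $S_{n}^{\varepsilon}(X)$ and show it is a quasi-isomorphism. The argument is an analogue of the classical proof that simplicial and singular homology agree, by induction over skeleta.

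\textbf{The chain map.} Fix a total (well-)order on $X$; the well-ordering theorem provides one. Use it to define ordered simplicial chains: an $n$-simplex $\{x_0<\cdots<x_n\}$ of $VR_{\varepsilon}(X)$ is sent to the skeletal simplex $\kappa(\{x_0,\ldots,x_n\}):V^{n}\to X$, $v_j\mapsto x_j$. This map is injective with image diameter less than $\varepsilon$, so it lies in $S_n^{\varepsilon}(X)$. The faces match the simplicial boundary with signs, so $\kappa$ is a chain map $C_n^{\Delta}(VR_{\varepsilon})\to S_n^{\varepsilon}(X)$. It obviously commutes with the inclusions for $\delta<\varepsilon$, because the same order is used at every scale. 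This gives commutativity of (\ref{dr}) and naturality once we know $\kappa_*$ is an isomorphism.

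\textbf{Filtration by image cardinality.} For $k\ge 0$, let $S^{\varepsilon,k}_n(X)$ be the subgroup generated by skeletal simplices whose image has at most $k+1$ points. Faces do not increase image cardinality, so this is a subcomplex. These are the ``fake skeletal pairs'' of Definition \ref{fakepair}. Similarly, let $C^{\Delta,k}$ be the simplicial chains of the $k$-skeleton of $VR_{\varepsilon}$, and note that $\kappa$ maps $C^{\Delta,k}$ into $S^{\varepsilon,k}$.

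The relative complex $S^{\varepsilon,k}/S^{\varepsilon,k-1}$ splits as a direct sum over $(k+1)$-element subsets $A\subset X$ with $\mathrm{diam}\,A<\varepsilon$. The summand for $A$ is the complex of all maps $V^n\to A$ modulo the non-surjective ones, i.e.\ $S_*(\Delta_A)/S_*(\partial\Delta_A)$. Here $S_*(\Delta_A)$ denotes the skeletal chains of the full simplex on $A$ (all maps $V^n\to A$), a finite metric space of diameter less than $\varepsilon$.

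The key lemma is that for a finite set $A$ of diameter less than $\varepsilon$, the complex of all maps $V^n\to A$ is acyclic in positive degrees. The proof uses the cone operator $c\mapsto a_0 * c$ (prepend a fixed point $a_0$), which gives $\partial(a_0*c)+a_0*\partial c=c$ exactly as in the proof that convex sets are acyclic. Applying the long exact sequence of the pair, together with induction on $k$ using the fact that $\partial\Delta_A$ is covered by lower-cardinality pieces, we get that the relative homology $H_n(S^{\varepsilon,k}/S^{\varepsilon,k-1})$ is $\mathbb{Z}$ in degree $n=k$ and zero otherwise, for each $A$. This mirrors $H_n(\Delta^k,\partial\Delta^k)$. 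Moreover, a generator is represented by the injective ordered simplex $\kappa(A)$.

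\textbf{Comparison.} The simplicial relative groups $C^{\Delta,k}/C^{\Delta,k-1}$ are likewise $\bigoplus_A\mathbb{Z}$, concentrated in degree $k$, and $\kappa$ induces an isomorphism on relative homology. By induction on $k$ and the five lemma applied to the long exact sequences of the pairs (\ref{2rr}), $\kappa_*:H_n(C^{\Delta,k})\to H_n(S^{\varepsilon,k})$ is an isomorphism for all $k$. Finally, $S_n^{\varepsilon}(X)=\bigcup_k S^{\varepsilon,k}_n$, and every chain and every bounding chain has finitely many simplices, each with finite image. Homology therefore commutes with this directed union, giving the isomorphism $\kappa_*:H_n^{\Delta}(VR_\varepsilon)\to K_n^{\varepsilon}(X)$.

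\textbf{Main obstacle.} The hardest step is computing the relative homology of the fake pair $(\Delta_A,\partial\Delta_A)$, together with its inductive dependence on the lower filtration levels. Here $\partial\Delta_A$ is a union of the complexes $\Delta_B$ for $B\subsetneq A$, and one needs a Mayer--Vietoris or inclusion--exclusion argument to see that its homology is that of a $(k-1)$-sphere. I would first try a direct argument: show that the complex of maps onto subsets of $A$ is chain equivalent to the ordered simplicial chains of the boundary of the simplex on $A$. This would be done via an acyclic-models style chain homotopy built from the cone operator, which is purely combinatorial because $A$ is finite.
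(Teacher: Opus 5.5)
Your outline is correct and shares the paper's skeleton: the image-cardinality filtration, the sequence (\ref{2rr}), induction on $k$, and the five lemma. It differs at the one step with real content, and there your route is the sound one.

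First, fixing a total order makes $\kappa$ well defined on chains. The paper's rule $[x_0,\ldots,x_n]\mapsto\sigma$ on oriented simplices is not, since $\sigma\circ\tau$ and $-\sigma$ are distinct elements of the free group $S_n^{\varepsilon}(X)$.

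Second, and more important, the paper kills $K_n^{\varepsilon}(X^k,X^{k-1})$ for $n>k$ using $T(\sigma)=(x_0,x_0,x_1,\ldots,x_n)$ and the identity $\partial T(\sigma)=\sigma-T(\partial\sigma)$. That identity is false. The first two faces of $T(\sigma)$ cancel, and one actually gets $\partial T(\sigma)+T(\partial\sigma)=(x_1,x_1,x_2,\ldots,x_n)$; for $\sigma=(a,b)$, $\partial T(\sigma)=(a,a)$. An apex that moves with the simplex does not give the cone identity; a fixed apex, as in your key lemma, does. Splitting the quotient over the sets $A$ and reducing to the fake pair $(\Delta_A,\partial\Delta_A)$ is the right replacement.

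Your ``main obstacle'' needs no Mayer--Vietoris argument if you state the inductive hypothesis for all metric spaces. For a $(k+1)$-point set $A$ of diameter less than $\varepsilon$, $S_*(\partial\Delta_A)$ is literally $S^{\varepsilon}_{*,k-1}(A)$, and $VR^{k-1}_{\varepsilon}(A)$ is the boundary complex of a $k$-simplex. So the level-$(k-1)$ hypothesis applied to $A$ says $\kappa$ is a homology isomorphism from the simplicial chains of that boundary complex to $S_*(\partial\Delta_A)$. Your cone lemma, with its simplicial analogue, gives the same for the full simplex versus $S_*(\Delta_A)$. The five lemma for the pair $(\Delta_A,\partial\Delta_A)$ then shows $\kappa$ is a homology isomorphism on each relative summand. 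This gives both the vanishing for $n\neq k$ and the fact that $\kappa(A)$ generates in degree $k$. Summing over $A$ and applying the five lemma to (\ref{2rr}) completes the step.

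Alternatively, $S_*^{\varepsilon}(X)$ is exactly the classical ordered chain complex of $VR_{\varepsilon}(X)$. Let $\theta$ send an injective tuple to its oriented simplex and every other tuple to $0$. Then $\theta$ is a chain map with $\theta\kappa=\mathrm{id}$. The carriers $\sigma\mapsto S_*(\Delta_{\sigma(V^n)})$ are acyclic by your fixed-apex cone, and the acyclic-carrier argument gives $\kappa\theta\simeq\mathrm{id}$. This yields a chain homotopy equivalence whose maps commute with change of scale, and it bypasses the filtration altogether.
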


\begin{remark}
\label{sd} The map $f_{\varepsilon\delta}$ is induced by the inclusion of
simplicial complexes $\mathrm{VR}_{\delta}(X)\hookrightarrow\mathrm{VR}%
_{\varepsilon}(X)$, whereas $j_{\varepsilon\delta}$ is induced by the
inclusion of the skeletal chain complexes $S_{n}^{\delta}(X)\hookrightarrow
S_{n}^{\varepsilon}(X)$. The point is that skeletal homology gives a different
chain-level model for the same scale-dependent homology, in a way closer in
spirit to singular homology.
\end{remark}

The commutative diagram (\ref{dr}) allows one to see the relationship between
persistence diagrams of the VR-homology and skeletal critical values, which we
will define more precisely later in this introduction. Roughly speaking,
skeletal critical values occur when the maps $j_{\varepsilon\delta}$ are not
injective or not surjective. Non-surjectivity of the maps $j_{\varepsilon
\delta}$ results in additional generators (or basis elements with coefficients
in a field), which may be considered as \textquotedblleft
birth\textquotedblright. Those generators persist until they find themselves
in the kernel of some $j_{\varepsilon\delta}$, which results in their
\textquotedblleft death\textquotedblright. In \cite{DPU}, we explore, for
finite metric spaces, the computational implications of Theorem \ref{VR} and
other theoretical results about skeletal homology from the present paper.

\begin{notation}
If $f,g:X\rightarrow Y$ are bounded functions into a metric space $Y$, we
denote the uniform metric between them by $\left\vert f-g\right\vert
=\underset{x\in X}{\sup}\{d(f(x),g(x))\}$.
\end{notation}

We define a metric (usually with infinite values) called the
\textit{ultradiamond metric }on the free abelian group $S_{n}^{\varepsilon
}(X)$, denoted by $\left\vert c_{1}-c_{2}\right\vert $ for cycles $c_{1}$ and
$c_{2}$, with $\left\vert c-0\right\vert $ denoted by $\left\vert c\right\vert
$. This metric, which is a close cousin to metrics defined by Graev
(\cite{Gr}), is an ultrametric that is invariant with respect to the group
operation, extends the uniform metric on skeletal simplices, and has the
property that the boundary map is $1$-Lipschitz (Theorem \ref{meth}). For
purposes of this introduction, we only need the following fact: If we may
write skeletal cycles $c_{1}=\sum\sigma_{i_{k}}$ and $c_{2}=\sum\sigma_{j_{k}%
}$ as sums of simplices such that for all $k$, $\left\vert \sigma_{i_{k}%
}-\sigma_{j_{k}}\right\vert <\delta$, then $\left\vert c_{1}-c_{2}\right\vert
<\delta$.

Theorem \ref{bdy} is a stability result at the level of cycles, i.e.,
\textquotedblleft close cycles are homologous\textquotedblright.
Unsurprisingly, there is some \textquotedblleft loss\textquotedblright\ in
this statement, quantified by the term $\varepsilon+\delta$. The term $N(c)$
measures the number of non-zero terms needed to express $c$ in terms of the
generating skeletal simplices. More precisely, any non-zero $c$ in a free
abelian group with generating set $\Gamma$ has a standard unique expression as
a finite sum using $\Gamma$, i.e., $c=\sum_{k}n_{k}\sigma_{k}$, where
$n_{k}\in\mathbb{Z}\backslash\{0\}$. By definition, $N(c)=\sum_{k}\left\vert
n_{k}\right\vert $. The ability to control $N(d)$ in Theorem \ref{bdy} is
useful for convergence questions involving the ultradiamond metric. Theorem
\ref{bdy} is the primary tool for almost all of the remaining results in this
paper. In fact, the proofs of these theorems are short enough that we will
include them in this section.

\begin{theorem}
\label{bdy}Suppose that $c_{1}$ and $c_{2}$ are $\varepsilon$-cycles in a
metric space $X$ such that $\left\vert c_{1}-c_{2}\right\vert <\delta$ for
some $\delta>0$. Then $c_{1}-c_{2}$ is the boundary of an $(\varepsilon
+\delta)$-chain $d$ such that $N(d)\leq(n+1)(N(c_{1})+N(c_{2}))$. In
particular $[c_{1}]_{\varepsilon+\delta}=[c_{2}]_{\varepsilon+\delta}$.
\end{theorem}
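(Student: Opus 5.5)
We will use the classical prism operator, restricted to $0$-skeleta. Since the ultradiamond metric is defined only later, we will rely solely on the property quoted in the introduction, read as a characterization. That is, we assume $\left\vert c_{1}-c_{2}\right\vert <\delta$ means that we can write
\[
c_{1}=\sum_{k=1}^{m}a_{k}\sigma_{k},\qquad c_{2}=\sum_{k=1}^{m}a_{k}\tau_{k},
\]
with $a_{k}=\pm1$ (repetitions allowed) and $\left\vert \sigma_{k}-\tau_{k}\right\vert <\delta$ for every $k$. Some of the $\sigma_k$ or $\tau_k$ may be degenerate padding terms that cancel in the sums, and we can arrange $m\leq N(c_{1})+N(c_{2})$. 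Verifying that such a matched decomposition exists, with this bound on $m$, from Definition \ref{udm} is the step I expect to be the main obstacle. The ultradiamond metric is built from a modified Cayley graph, so a path of length $<\delta$ from $c_2$ to $c_1$ should be a sequence of moves, each replacing one simplex by a $\delta$-close one. Since the metric is an ultrametric, every step has size $<\delta$. Collapsing the path, i.e.\ tracking which generator each simplex of $c_1$ was moved from, should yield the matching, and a careful count should give $m\leq N(c_{1})+N(c_{2})$.

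Given the matching, we define for each pair $(\sigma,\tau)$ of $(n,\varepsilon)$-skeletal simplices with $\left\vert \sigma-\tau\right\vert <\delta$ the skeletal prism chain
\[
P(\sigma,\tau)=\sum_{i=0}^{n}(-1)^{i}[\sigma(v_{0}),\dots,\sigma(v_{i}),\tau(v_{i}),\dots,\tau(v_{n})],
\]
where the bracket denotes the $(n+1)$-skeletal simplex with these vertex values. Each such simplex has image diameter less than $\varepsilon+\delta$. Indeed, distances among the $\sigma$-values and among the $\tau$-values are $<\varepsilon$, and for $j\leq i\leq l$ the triangle inequality gives
\[
d(\sigma(v_{j}),\tau(v_{l}))\leq d(\sigma(v_{j}),\sigma(v_{l}))+d(\sigma(v_{l}),\tau(v_{l}))<\varepsilon+\delta.
\]
So $P(\sigma,\tau)$ is an $(\varepsilon+\delta)$-chain with $N(P(\sigma,\tau))\leq n+1$.

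The purely combinatorial computation from the proof of homotopy invariance in Hatcher (p.~112) carries over verbatim to $0$-skeleta. It gives
\[
\partial P(\sigma,\tau)=\tau-\sigma-P(\partial\sigma,\partial\tau)\quad\text{(up to an overall sign convention)},
\]
where $P(\partial\sigma,\partial\tau)=\sum_{j}(-1)^{j}P(\sigma^{j},\tau^{j})$. The faces $\sigma^{j},\tau^{j}$ are again $\delta$-close, so these terms are defined.

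Now set $d=\pm\sum_{k}a_{k}P(\sigma_{k},\tau_{k})$, which is an $(\varepsilon+\delta)$-chain with $N(d)\leq(n+1)m\leq(n+1)(N(c_{1})+N(c_{2}))$. Its boundary is $c_{1}-c_{2}$ plus the error term $\sum_{k}a_{k}P(\partial\sigma_{k},\partial\tau_{k})$. The error term vanishes provided the matching is compatible with faces, i.e.\ the face-pairs $(\sigma_{k}^{j},\tau_{k}^{j})$ cancel in pairs with opposite signs, just as $\partial c_{1}=0$ and $\partial c_{2}=0$. This is the second delicate point. The matching must be arranged so that cancellations in $\partial c_1$ and $\partial c_2$ happen in the same pattern, so that $P$ is applied to a chain of \emph{pairs}. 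To handle it, I would formalize $P$ as a chain map on the free abelian group generated by $\delta$-close pairs, with $c=\sum a_{k}(\sigma_{k},\tau_{k})$. Then the error term is $P(\partial c)$, and the matching coming from the Cayley-graph path should make $c$ itself a cycle. If that fails directly, a fallback is to use the fact that pairs are determined by vertex values. If two simplices of $c_1$ agree (or are identical), one can re-choose their partners to agree as well, at no cost, since closeness is tested vertexwise; this enforces compatible cancellation. With the error term gone, $\partial d=c_{1}-c_{2}$, and hence $[c_{1}]_{\varepsilon+\delta}=[c_{2}]_{\varepsilon+\delta}$.
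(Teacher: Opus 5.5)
Your route is the paper's own: a skeletal prism $P$ applied to a $\delta$-close matching of the terms of $c_1$ and $c_2$, followed by the claim that the correction term $\sum_k a_kP(\partial\sigma_k,\partial\tau_k)$ vanishes. Your first anticipated obstacle is harmless: Theorem \ref{meth}(1) plus padding gives the matching. The second is a genuine gap, and it cannot be closed.

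Regard $C=\sum_k a_k(\sigma_k,\tau_k)$ as a chain in the complex of $\delta$-close pairs, with $\partial(\sigma,\tau)=\sum_j(-1)^j(\sigma^j,\tau^j)$. Then the correction term is $P(\partial C)$. The hypothesis $\partial c_1=\partial c_2=0$ says only that both coordinate projections of $\partial C$ vanish, not that $\partial C=0$. Your fallback of re-partnering identical simplices does not touch the cancellations between faces of \emph{distinct} simplices that make $\partial c_1=0$. The paper's proof has the same hole. It asserts $\sum_k s_kP_{n-1}(\partial\sigma_{i_k},\partial\sigma_{j_k})=P_{n-1}(\partial c_1,\partial c_2)=P_{n-1}(0,0)=0$ by ``linearity''. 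But $P_{n-1}$ is defined on pairs and is not linear on $S_{n-1}^{\varepsilon}(X)\times S_{n-1}^{\varepsilon}(X)$; already in degree $0$, $P(a,b)=[a,b]$ is not of the form $A(a)+B(b)$.

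In fact the statement is false. Let $X=\mathbb{R}/\mathbb{Z}$ with the arc-length metric, take $\varepsilon+\delta<\frac{1}{3}$, and choose $m$ with $\frac{1}{m}<\min\{\varepsilon,\delta\}$. Put $x_i=i/m$ (indices mod $m$), $c_1=\sum_i[x_i,x_{i+1}]$ and $c_2=\sum_i[x_{i+1},x_i]$. Both are $\varepsilon$-cycles, and $\bigl|[x_i,x_{i+1}]-[x_{i+1},x_i]\bigr|=\frac{1}{m}<\delta$. So $|c_1-c_2|<\delta$, even in the termwise-matched sense you assume. Note that $P([a,b],[b,a])=[a,b,a]-[a,b,a]=0$, so here all of $c_1-c_2$ sits in the correction term.

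To see that $c_1-c_2$ is not an $(\varepsilon+\delta)$-boundary, use a cocycle. For $d(a,b)<\frac{1}{2}$ let $t(a,b)\in(-\frac{1}{2},\frac{1}{2})$ be the signed displacement from $a$ to $b$, and set $\phi([a,b])=t(a,b)$. For a skeletal $2$-simplex $[a,b,c]$ of diameter $<\varepsilon+\delta$, $\phi(\partial[a,b,c])=t(b,c)-t(a,c)+t(a,b)$ is an integer of absolute value $<1$, hence $0$. So $\phi$ vanishes on all $(\varepsilon+\delta)$-boundaries, yet $\phi(c_1-c_2)=1-(-1)=2$. Equivalently, $c_1+c_2=\partial\sum_i([x_i,x_{i+1},x_i]+[x_i,x_i,x_i])$, so the theorem would force $2[c_1]_{\varepsilon+\delta}=0$. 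Since $|c_1-c_2|$ depends only on $c_1-c_2$, even the case $c_2=0$ fails.

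What your argument, and the paper's, does prove is the version in which $C$ is a cycle in the pair complex. An example is $c_1=f_{\#}(c)$, $c_2=g_{\#}(c)$ for a cycle $c$ and functions with $|f-g|<\delta$. Then $\partial P(C)=\pm(c_1-c_2)$ with $N(P(C))\leq(n+1)N(c)$, which is exactly what Theorem \ref{induced} needs.
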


\begin{definition}
\label{moc}If $f:X\rightarrow Y$ is a (possibly not continuous) function
between metric spaces, a \textit{modulus of continuity} $\omega$ of $f$ is a
non-decreasing function $\omega:[0,\infty)\rightarrow\lbrack0,\infty)$ such
that if $x,y\in X$ satisfy $d(x,y)<\omega(\varepsilon)$ then
$d(f(x),f(y))<\varepsilon$.
\end{definition}

If $\omega$ is a modulus of continuity of $f$, and both $\varepsilon$ and
$\omega(\varepsilon)$ are positive, it is immediate that the map
$\sigma\mapsto f\circ\sigma$ extends to a chain map $f_{\#}^{\omega}%
:S_{n}^{\omega(\varepsilon)}(X)\rightarrow S_{n}^{\varepsilon}(Y)$, which
induces a map $f_{\ast}^{\omega}:K_{n}^{\omega(\varepsilon)}(X)\rightarrow
K_{n}^{\varepsilon}(Y)$. Note that $f_{\ast}^{\omega}$ is functorial in the
sense that if $f_{i}$ has modulus of continuity $\omega_{i}$ then $\omega
_{1}\circ\omega_{2}$ is a modulus of continuity for $f_{2}\circ f_{1}$, and
$\left(  f_{2}\right)  _{\ast}^{\omega_{2}}\circ(f_{1})_{\ast}^{\omega_{1}%
}=\left(  f_{2}\circ f_{1}\right)  _{\ast}^{\omega_{1}\circ\omega_{2}}$. We
will often simply denote $f_{\ast}^{\omega}$ by $f_{\ast}$ and only include
$\omega$ in the notation for the domain and range of $f_{\ast}$.

\begin{remark}
\label{mc}Modulus of continuity is typically defined for uniformly continuous
functions, and typically is a positive function on $(0,\infty)$. However,
continuity is too strong an assumption for our purposes. Our more general
usage leads to some observations. First, $\underset{\varepsilon\rightarrow
0}{\lim}\omega_{f}(\varepsilon)$ need not be $0$. 
Suppose that $f$ is $(\varepsilon,\delta)$-continuous, meaning that if
$d(x,y)<\delta$ then $d(f(x),f(y))<\varepsilon$. Then, equivalently, $f$ has a
modulus of continuity that is $0$ on the interval $[0,\varepsilon)$ and
$\omega(\varepsilon)=\delta$ otherwise. Clearly, every function has the
$0$-function as a modulus of continuity (the definition is vacuously true).
Being $1$-Lipschitz is equivalent to having the function $\omega
(\varepsilon)=\varepsilon$ as modulus of continuity, and for $1$-Lipschitz
functions we will always use this modulus of continuity. Any set of functions
with the same domain and range has a common modulus of continuity, namely the
infimum of their moduli of continuity (which could the constant map $0$). The
only complication of allowing $\omega(\varepsilon)=0$ for positive
$\varepsilon$ is that we will need to assume $\omega(\varepsilon)>0$ for
expressions like $K_{n}^{\omega(\varepsilon)}(X)$, as in the next theorem.
\end{remark}

Theorem \ref{induced} is analogous to the classical theorem that homotopic
maps induce the same homomorphism on homology, where, as mentioned above,
\textquotedblleft homotopic\textquotedblright\ is replaced by
\textquotedblleft close\textquotedblright.

\begin{theorem}
\label{induced}Let $f,g:X\rightarrow Y$ be functions with a common modulus of
continuity $\omega$. If $0<\varepsilon,\delta$ are such that $\omega
(\varepsilon)>0$ and $\left\vert f-g\right\vert <\delta$ then $f_{\ast
}=g_{\ast}:K_{n}^{\omega(\varepsilon)}(X)\rightarrow K_{n}^{\varepsilon
+\delta}(Y)$.
\end{theorem}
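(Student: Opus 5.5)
The plan is to deduce this directly from Theorem \ref{bdy}, applied in $Y$ to the image cycles. Fix an $\omega(\varepsilon)$-cycle $c=\sum_k n_k\sigma_k$ in $X$, with each $\sigma_k:V^n\rightarrow X$ of image diameter less than $\omega(\varepsilon)$. Because $\omega$ is a modulus of continuity for both $f$ and $g$, the composites $f\circ\sigma_k$ and $g\circ\sigma_k$ have image diameter less than $\varepsilon$. Hence $f_{\#}(c)=\sum_k n_k (f\circ\sigma_k)$ and $g_{\#}(c)=\sum_k n_k(g\circ\sigma_k)$ are $\varepsilon$-chains in $Y$. They are also cycles, since $f_{\#},g_{\#}$ are chain maps: $\sigma\mapsto f\circ\sigma$ commutes with restriction to faces.

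The next step is to show that $\left\vert f_{\#}(c)-g_{\#}(c)\right\vert<\delta$ in the ultradiamond metric. For each simplex, $\left\vert f\circ\sigma_k-g\circ\sigma_k\right\vert=\max_{v\in V^n}d(f(\sigma_k(v)),g(\sigma_k(v)))\leq\left\vert f-g\right\vert<\delta$. Here the maximum is over a finite set, so the inequality stays strict. Write $c$ as a sum of $N(c)$ signed simplices, repeating $\sigma_k$ exactly $|n_k|$ times with the appropriate sign. This gives matching expressions of $f_{\#}(c)$ and $g_{\#}(c)$ as sums of terms that are pairwise $\delta$-close. By the fact quoted before Theorem \ref{bdy}, that $\left\vert c_1-c_2\right\vert<\delta$ whenever the cycles can be written as sums of pairwise $\delta$-close simplices, we get $\left\vert f_{\#}(c)-g_{\#}(c)\right\vert<\delta$.

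The one point needing care is that the quoted fact is stated for sums of simplices, whereas $c$ may carry negative coefficients. I would handle this in one of two ways. The first is to invoke the properties in Theorem \ref{meth}: the metric is an invariant ultrametric extending the uniform metric. Invariance gives $\left\vert -\alpha+\beta\right\vert=\left\vert \alpha-\beta\right\vert$, and with the ultrametric inequality this yields $\left\vert \sum a_k-\sum b_k\right\vert\leq\max_k\left\vert a_k-b_k\right\vert<\delta$ for signed terms $a_k=\pm f\circ\sigma_k$ and $b_k=\pm g\circ\sigma_k$. The second is to note that the definition (Definition \ref{udm}) presumably allows signed generators directly. This is the main, though mild, obstacle. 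Degenerate cases, where different $\sigma_k$ have equal images under $f$ so that terms cancel, cause no trouble, because only the termwise matching is used.

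Finally, Theorem \ref{bdy} applied to the $\varepsilon$-cycles $f_{\#}(c)$ and $g_{\#}(c)$ gives $[f_{\#}(c)]_{\varepsilon+\delta}=[g_{\#}(c)]_{\varepsilon+\delta}$. This says that $f_{\ast}[c]_{\omega(\varepsilon)}=g_{\ast}[c]_{\omega(\varepsilon)}$ in $K_n^{\varepsilon+\delta}(Y)$, where $f_{\ast}$ here denotes $f_{\ast}^{\omega}$ followed by $j_{\varepsilon+\delta,\varepsilon}$. Since $c$ was an arbitrary cycle, $f_{\ast}=g_{\ast}$.
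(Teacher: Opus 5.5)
Your proposal is correct and is essentially the paper's proof. The termwise bound $\left\vert f\circ\sigma_i-g\circ\sigma_i\right\vert<\delta$ gives $\left\vert f_{\#}(c)-g_{\#}(c)\right\vert<\delta$ in the ultradiamond metric, and Theorem \ref{bdy} applied to the $\varepsilon$-cycles $f_{\#}(c),g_{\#}(c)$ then gives equality in $K_n^{\varepsilon+\delta}(Y)$. The paper handles the sign issue you flag simply ``by definition of the ultradiamond metric'': each signed step $\pm(f\circ\sigma_i-g\circ\sigma_i)$ is an edge of the diamond graph of length less than $\delta$, which is exactly what your invariance/ultrametric telescoping argument makes explicit.
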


\begin{proof}
Let $[c]_{\omega(\varepsilon)}\in K_{n}^{\omega(\varepsilon)}(X)$; that is,
$c=\sum k_{i}\sigma_{i}$, where $\sigma_{i}$ is an $\omega(\varepsilon
)$-simplex. Then $\left\vert f\circ\sigma_{i}-g\circ\sigma_{i}\right\vert
<\delta$ for all $i$, and by definition of the ultradiamond metric,
$\left\vert f_{\#}(c)-g_{\#}(c)\right\vert <\delta$ in $S_{n}^{\varepsilon
}(Y)\subset S_{n}^{\varepsilon+\delta}(Y)$. Since $f_{\#}(c)$ and $g_{\#}(c)$
are $\varepsilon$-cycles, by Theorem \ref{bdy},
\[
f_{\ast}([c]_{\omega(\varepsilon)})=[f_{\#}(c)]_{\varepsilon+\delta}%
=[g_{\#}(c)]_{\varepsilon+\delta}=g_{\ast}([c]_{\omega(\varepsilon)})
\]
in $K_{n}^{\varepsilon+\delta}(Y)$.
\end{proof}

Theorem \ref{induced} and functorality allow us to obtain many stability results.

\begin{definition}
\label{dhi} Let $X$ and $Y$ be metric spaces. Functions $f:X\to Y$ and $g:Y\to
X$ are called $\delta$-homotopy inverses for some $\delta>0$ if

\begin{enumerate}
\item $\omega(\varepsilon)=\max\{\varepsilon-2\delta,0\} $ is a common modulus
of continuity for both $f$ and $g$, and

\item $\left|  f\circ g-\operatorname{id}_{Y}\right|  , \left|  g\circ
f-\operatorname{id}_{X}\right|  <2\delta. $
\end{enumerate}

We will also refer to the pair $f,g$ as a $\delta$-homotopy equivalence.
\end{definition}

\begin{remark}
Although we will not use the idea in this paper, so we won't define it, we
note that $\delta$-homotopy equivalences form a \textit{quasi-isometry}.
\end{remark}

The \textit{interleaving distance} (\cite{CC}) in our context is defined as
follows. Given two metric spaces $X,Y$, $\{K_{n}^{\varepsilon}%
(X),j_{\varepsilon\delta}\}$ and $\{K_{n}^{\varepsilon}(Y),j_{\varepsilon
\delta}\}$ are said to be $\delta$-interleaved for some $\delta>0$ if there
exist a families of homomorphisms $\phi_{\varepsilon}:K_{n}^{\varepsilon
}(X)\rightarrow K_{n}^{\varepsilon+\delta}(Y)$ and $\psi_{\varepsilon}%
:K_{n}^{\varepsilon}(Y)\rightarrow K_{n}^{\varepsilon+\delta}(X)$ such that
$\psi_{\varepsilon+\delta}\circ\phi_{\varepsilon}=j_{\varepsilon
+2\delta,\varepsilon}$ and $\phi_{\varepsilon+\delta}\circ\psi_{\varepsilon
}=j_{\varepsilon+2\delta,\varepsilon}$. The \textit{interleaving}
\textit{distance} between the skeletal homology persistence modules $X$ and
$Y$ is the infimum of all $\delta>0$ such that $\{K_{n}^{\varepsilon
}(X),j_{\varepsilon\delta}\}$ and $\{K_{n}^{\varepsilon}(Y),j_{\varepsilon
\delta}\}$ are $\delta$-interleaved.

\begin{theorem}
\label{interleaf}If $f:X\rightarrow Y$ and $g:Y\rightarrow X$ are a $\delta
$-homotopy equivalence, then the induced maps $f_{\ast}$ and $g_{\ast}$
comprise a $3\delta$-interleaving between the skeletal homology persistence
modules of $X$ and $Y$.
\end{theorem}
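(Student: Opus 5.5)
We need maps $\phi_\varepsilon:K_n^\varepsilon(X)\to K_n^{\varepsilon+3\delta}(Y)$ and $\psi_\varepsilon:K_n^\varepsilon(Y)\to K_n^{\varepsilon+3\delta}(X)$. We take them to be induced by $f$ and $g$ using the common modulus of continuity $\omega(\varepsilon)=\max\{\varepsilon-2\delta,0\}$ from Definition \ref{dhi}. For any $\varepsilon>0$ we have $\omega(\varepsilon+2\delta)=\varepsilon>0$. Hence $f_\#$ carries $S_n^{\varepsilon}(X)$ into $S_n^{\varepsilon+2\delta}(Y)$, since a simplex of diameter $<\varepsilon=\omega(\varepsilon+2\delta)$ goes to one of diameter $<\varepsilon+2\delta$. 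We then define
\[
\phi_\varepsilon=j_{\varepsilon+3\delta,\varepsilon+2\delta}\circ f_\ast:K_n^\varepsilon(X)\to K_n^{\varepsilon+3\delta}(Y),
\]
and define $\psi_\varepsilon$ symmetrically using $g$. Because $f_\#$ is just the chain map $\sigma\mapsto f\circ\sigma$, it commutes with the inclusion chain maps. Therefore $f_\ast$ commutes with the change-of-scale maps $j$, and the $\phi_\varepsilon$ form a morphism of persistence modules shifted by $3\delta$; the same holds for $\psi$.

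Next we verify $\psi_{\varepsilon+3\delta}\circ\phi_\varepsilon=j_{\varepsilon+6\delta,\varepsilon}$. At the chain level, for an $\varepsilon$-cycle $c$ in $X$, the composite is represented by $(g\circ f)_\#(c)$, viewed at scale $\varepsilon+6\delta$. By naturality of $f_\#,g_\#$ with inclusions, it suffices to compare $(g\circ f)_\#(c)$ and $c$. Functoriality (Definition \ref{moc}) gives $\omega\circ\omega$ as a modulus of continuity for $g\circ f$. A cleaner route is to apply Theorem \ref{induced} directly to the two functions $g\circ f$ and $\mathrm{id}_X$. These need a common modulus, and $\omega\circ\omega(\eta)=\max\{\eta-4\delta,0\}$ works for both, since the identity is $1$-Lipschitz and any smaller modulus remains valid. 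We also have $|g\circ f-\mathrm{id}_X|<2\delta$.

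Theorem \ref{induced} with $\eta=\varepsilon+4\delta$ then gives
\[
(g\circ f)_\ast=(\mathrm{id}_X)_\ast:K_n^{\varepsilon}(X)\to K_n^{\varepsilon+4\delta+2\delta}(X)=K_n^{\varepsilon+6\delta}(X).
\]
Here $(\mathrm{id}_X)_\ast$ is exactly $j_{\varepsilon+6\delta,\varepsilon}$. We must also check that $(g\circ f)_\ast$, followed by inclusion into scale $\varepsilon+6\delta$, agrees with $\psi_{\varepsilon+3\delta}\circ\phi_\varepsilon$. Both are represented by the same chain $g\circ f\circ c$, and that chain is an $(\varepsilon+4\delta)$-cycle. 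So both composites give the class $[g f c]_{\varepsilon+6\delta}$. The identity $\phi_{\varepsilon+3\delta}\circ\psi_\varepsilon=j_{\varepsilon+6\delta,\varepsilon}$ follows in the same way with $f,g$ swapped.

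The only delicate point is the scale bookkeeping. We must confirm that the moduli are positive where Theorem \ref{induced} requires it: $\omega(\varepsilon+2\delta)=\varepsilon>0$ and $\omega\circ\omega(\varepsilon+4\delta)=\varepsilon>0$. We must also confirm that the total shift $4\delta+2\delta=6\delta=2\cdot 3\delta$ matches the definition of a $3\delta$-interleaving. Since the composite-induced and composed-induced classes are represented by the same chain, no extra homotopy argument is needed.
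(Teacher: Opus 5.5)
Your proof is correct and is essentially the paper's argument: you apply Theorem \ref{induced} to $g\circ f$ and $\mathrm{id}_X$ (and symmetrically to $f\circ g$ and $\mathrm{id}_Y$) with common modulus $\max\{\eta-4\delta,0\}$ and closeness $2\delta$, so the composite equals the $6\delta$ change-of-scale map. Your explicit padding of $f_\ast$ by $j_{\varepsilon+3\delta,\varepsilon+2\delta}$, which makes each map shift scale by exactly $3\delta$, and your check that $\psi_{\varepsilon+3\delta}\circ\phi_\varepsilon$ is represented by the same chain $g\circ f\circ c$, make precise the scale bookkeeping that the paper leaves implicit.
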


\begin{proof}
From Theorem \ref{induced} we immediately have $(g\circ f)_{\ast}=\left(
id_{X}\right)  _{\ast}$ and $(f\circ g)_{\ast}=\left(  id_{Y}\right)  _{\ast}%
$. However, $(id_{X})_{\ast}$ and $(id_{Y})_{\ast}$ are not equal to the
identity! For one thing, the domain and range of each function are different.
In fact, $f\circ g$ and $g\circ f$ have modulus of continuity $\omega
(\varepsilon)=\max\{0,\varepsilon-4\delta\}$ and therefore the induced maps
have the required domain and range to be a $3\delta$-interleaving:
\[
(g\circ f)_{\ast}=g_{\ast}\circ f_{\ast}=\left(  id_{X}\right)  _{\ast
}=j_{\varepsilon+2\delta,\varepsilon-4\delta}:K_{n}^{\varepsilon-4\delta
}(X)\rightarrow K_{n}^{\varepsilon+2\delta}(X)
\]
and
\[
(f\circ g)_{\ast}=f_{\ast}\circ g_{\ast}=\left(  id_{Y}\right)  _{\ast
}=j_{\varepsilon+2\delta,\varepsilon-4\delta}:K_{n}^{\varepsilon-4\delta
}(Y)\rightarrow K_{n}^{\varepsilon+2\delta}(Y)
\]

\end{proof}

Recall that compact metric spaces $X,Y$ have Gromov-Hausdorff distance
$d_{GH}(X,Y)<\delta$ if there are isometric embeddings of $X$ and $Y$ into a
metric space $Z$, the images of which have Hausdorff distance $d_{H}(X,Y)<$
$\delta$. Gromov showed that this definition is equivalent to the following:
there is a relation $R$ on $X\times Y$ with distortion less than
$dis(R)<2\delta$. Here
\[
dis(R)=\sup\{\left\vert d(x,x^{\prime})-d(y,y^{\prime})\right\vert
:(x,y),(x^{\prime},y^{\prime})\in R\}
\]

\bigskip Suppose that $X,Y$ are subsets of a metric space with $d_{H}(X,Y)<$
$\delta$. We may define $f:X\rightarrow Y$ by choosing, for each $x\in X$,
some $f(x)\in Y$ such that $d(x,f(x))<\delta$. Similarly, define
$g:Y\rightarrow X$. Then by the triangle inequality, $f$ and $g$ are $\delta
$-homotopy inverses. Conversely, for $X,Y$ compact, suppose that $f$ and $g$
are $\delta$-homotopy inverses. Define a relation $R$ on $X\times Y$ by
$(x,y)\in R$ if and only $d_{Y}(f(x),y)<2\delta\text{ and }d_{X}%
(x,g(y))<2\delta$. By the second condition of the above definition, $R$ is a
correspondence. Now take $(x,y),(x^{\prime},y^{\prime})\in R$. Since $f$ has
modulus of continuity $\omega(\varepsilon)=\operatorname{max}\{0,\varepsilon
-2\delta\}$, we have $d_{Y}(f(x),f(x^{\prime}))\leq d_{X}(x,x^{\prime
})+2\delta$. Therefore
\[
d_{Y}(y,y^{\prime})\leq d_{Y}(y,f(x))+d_{Y}(f(x),f(x^{\prime}))+d_{Y}%
(f(x^{\prime}),y^{\prime})
\]%
\[
<2\delta+\big(d_{X}(x,x^{\prime})+2\delta\big)+2\delta=d_{X}(x,x^{\prime
})+6\delta
\]
Similarly, using the modulus of continuity for $g$, we obtain $d_{X}%
(x,x^{\prime})<d_{Y}(y,y^{\prime})+6\delta$. Hence $\left\vert d_{X}%
(x,x^{\prime})-d_{Y}(y,y^{\prime})\right\vert <6\delta$ for all
$(x,y),(x^{\prime},y^{\prime})\in R$. Thus $\operatorname{dis}(R)<6\delta$. We
summarize this discussion as the following lemma, which is unsurprising if not
precisely stated in metric geometry:\ 

\begin{lemma}
\label{ill}If $X$ and $Y$ are subsets of a metric space $Z$ and $d_{H}(X,
Y)<\delta$ for some $\delta>0$, then $X$ and $Y$ are $\delta$-homotopy
equivalent. Conversely, if $X$ and $Y$ are compact and $\delta$-homotopy
equivalent for some $\delta>0$, then $d_{GH}(X,Y)<3\delta$.
\end{lemma}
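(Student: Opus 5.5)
The plan is to make rigorous the discussion preceding the statement, with one extra point of care: strict inequalities. The uniform metric $\left\vert \cdot\right\vert$ is a supremum, so pointwise bounds $<2\delta$ only give a supremum $\leq2\delta$. I will handle this by working at a slightly smaller scale $\delta^{\prime}<\delta$ (or a slightly smaller threshold $\eta<2\delta$) chosen in advance.

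\emph{Forward direction.} Since $d_{H}(X,Y)<\delta$, first fix $\delta^{\prime}$ with $d_{H}(X,Y)<\delta^{\prime}<\delta$. For each $x\in X$ choose $f(x)\in Y$ with $d(x,f(x))<\delta^{\prime}$, and symmetrically choose $g(y)\in X$ with $d(y,g(y))<\delta^{\prime}$.
\begin{itemize}
\item Modulus of continuity: the triangle inequality gives $d(f(x),f(x^{\prime}))<d(x,x^{\prime})+2\delta^{\prime}$. Hence $d(x,x^{\prime})<\varepsilon-2\delta$ forces $d(f(x),f(x^{\prime}))<\varepsilon$. When $\varepsilon\leq2\delta$ the condition is vacuous, so $\omega(\varepsilon)=\max\{\varepsilon-2\delta,0\}$ is a modulus of continuity for $f$, and likewise for $g$.
\item Near-inverses: $d(g(f(x)),x)\leq d(g(f(x)),f(x))+d(f(x),x)<2\delta^{\prime}$. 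Taking the supremum gives $\left\vert g\circ f-\operatorname{id}_{X}\right\vert \leq2\delta^{\prime}<2\delta$, and symmetrically $\left\vert f\circ g-\operatorname{id}_{Y}\right\vert<2\delta$.
\end{itemize}
So $f,g$ satisfy Definition \ref{dhi}.

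\emph{Converse.} Suppose $f,g$ are $\delta$-homotopy inverses. Since the two suprema in Definition \ref{dhi} are strictly less than $2\delta$, pick $\eta$ with both suprema $<\eta<2\delta$. Define $R$ by $(x,y)\in R$ iff $d(f(x),y)<\eta$ and $d(x,g(y))<\eta$.
\begin{itemize}
\item $R$ is a correspondence: $(x,f(x))\in R$ because $d(x,g(f(x)))<\eta$, and $(g(y),y)\in R$ because $d(f(g(y)),y)<\eta$.
\item Continuity bound: the modulus $\omega$ gives $d(f(x),f(x^{\prime}))\leq d(x,x^{\prime})+2\delta$. Indeed, $d(x,x^{\prime})<\varepsilon-2\delta$ implies $d(f(x),f(x^{\prime}))<\varepsilon$ for every $\varepsilon>d(x,x^{\prime})+2\delta$.
\item Distortion: for $(x,y),(x^{\prime},y^{\prime})\in R$ the triangle inequality gives $d(y,y^{\prime})<d(x,x^{\prime})+2\eta+2\delta$. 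The symmetric argument with $g$ gives the reverse bound, so $\operatorname{dis}(R)\leq2\eta+2\delta<6\delta$.
\end{itemize}
Gromov's characterization for compact spaces, $d_{GH}\leq\frac{1}{2}\operatorname{dis}(R)$, then yields $d_{GH}(X,Y)<3\delta$.

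The only real obstacle is this strict-versus-nonstrict bookkeeping. The choice of $\delta^{\prime}$ and $\eta$ is what I would do first to make every inequality strict. Everything else is the triangle inequality.
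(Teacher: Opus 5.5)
Your proof is correct and follows the same route as the paper: you choose nearby points to define $f$ and $g$ and apply the triangle inequality for the forward direction, and for the converse you build the correspondence $R$ from $f$ and $g$ using the bound $d(f(x),f(x^{\prime}))\leq d(x,x^{\prime})+2\delta$ obtained from the modulus of continuity. Your preliminary choice of $\delta^{\prime}<\delta$ and $\eta<2\delta$ genuinely tidies up the paper's argument. Passing from pointwise strict bounds to suprema, the paper's version only yields $\left\vert g\circ f-\operatorname{id}_{X}\right\vert \leq2\delta$ and $\operatorname{dis}(R)\leq6\delta$.
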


We obtain a new proof of interleaving stability with respect to the
Gromov-Hausdorff metric (Lemma 4.3 \cite{CDO}).

\begin{corollary}
\label{ilc}If $X$ and $Y$ are compact metric spaces with $d_{GH}(X,Y)<\delta$
then the interleaving distance between their skeletal homology persistence
modules is less than $2\delta$.
\end{corollary}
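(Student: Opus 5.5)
The plan is to avoid going through Lemma \ref{ill} and Theorem \ref{interleaf}, since that route only yields a $3\delta$-interleaving. Instead I will build the interleaving maps directly from a Hausdorff-close embedding, and prove the needed homologies with an explicit prism chain rather than with Theorem \ref{bdy}.

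\textbf{Setup.} Since $d_{GH}(X,Y)<\delta$, first choose $\delta'<\delta$ with $d_{GH}(X,Y)<\delta'$. Then fix isometric embeddings of $X$ and $Y$ into some $Z$ with $d_H(X,Y)<\delta'$. For each $x\in X$ choose $f(x)\in Y$ with $d(x,f(x))<\delta'$, and similarly choose $g:Y\rightarrow X$. By the triangle inequality, $d(f(x),f(x'))<d(x,x')+2\delta'$. Hence $f\circ\sigma$ is an $(\varepsilon+2\delta')$-simplex whenever $\sigma$ is an $\varepsilon$-simplex. This gives chain maps $f_{\#}:S_n^{\varepsilon}(X)\rightarrow S_n^{\varepsilon+2\delta'}(Y)$ and, likewise, $g_{\#}:S_n^{\varepsilon}(Y)\rightarrow S_n^{\varepsilon+2\delta'}(X)$. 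Set $\phi_\varepsilon=f_{*}$ and $\psi_\varepsilon=g_{*}$. These commute with the change-of-scale maps $j$, because everything is defined on simplices and $j$ is induced by inclusion.

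\textbf{Key step.} I must show $g_{*}\circ f_{*}=j_{\varepsilon+4\delta',\varepsilon}$ on $K_n^\varepsilon(X)$, and symmetrically for $Y$. Let $h=g\circ f$. By the triangle inequality through $f(x)$,
\[
d(x,h(x))\leq d(x,f(x))+d(f(x),g(f(x)))<2\delta' .
\]
For an $\varepsilon$-simplex $\sigma$, define the prism chain
\[
P\sigma=\sum_{i}(-1)^{i}\tau_i ,
\]
where $\tau_i$ is the skeletal $(n+1)$-simplex with vertices $\sigma(v_0),\dots,\sigma(v_i),h\sigma(v_i),\dots,h\sigma(v_n)$. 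This is the skeletal version of the standard prism operator in Hatcher, taken between the maps $\mathrm{id}$ and $h$. The purely combinatorial identity $\partial P+P\partial=h_{\#}-\mathrm{id}$ carries over verbatim, as noted in the paper for combinatorial parts of singular homology. It remains to check the diameters of the $\tau_i$. Pairs within the $\sigma$-part are $<\varepsilon$ apart, and pairs within the $h\sigma$-part are $<\varepsilon+4\delta'$ apart. Mixed pairs satisfy
\[
d(\sigma(v_a),h\sigma(v_b))\leq d(\sigma(v_a),\sigma(v_b))+d(\sigma(v_b),h\sigma(v_b))<\varepsilon+2\delta' .
\]
So $P\sigma\in S_{n+1}^{\varepsilon+4\delta'}(X)$, and for an $\varepsilon$-cycle $c$ this gives $h_{\#}c-c=\partial Pc$. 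Thus $[h_{\#}c]_{\varepsilon+4\delta'}=[c]_{\varepsilon+4\delta'}$, which is exactly the interleaving identity. The same argument with the roles of $X$ and $Y$ swapped handles $f\circ g$.

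\textbf{Conclusion and main obstacle.} The maps $f_{*},g_{*}$ therefore form a $2\delta'$-interleaving, so the interleaving distance is at most $2\delta'<2\delta$. The main obstacle is the scale bookkeeping in the key step. Invoking Theorem \ref{bdy} would lose an additional $\delta$-term and overshoot $2\delta$, so the direct prism estimate is what keeps every simplex below $\varepsilon+4\delta'$.
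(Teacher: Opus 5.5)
Your argument is correct, and it takes a different and sharper route than the paper. The paper gives no separate proof. The corollary is meant to follow from Lemma~\ref{ill} (your near-point maps $f,g$ are exactly its $\delta$-homotopy equivalence) combined with Theorem~\ref{interleaf}, whose proof runs through Theorem~\ref{induced} and hence Theorem~\ref{bdy}. As you observed, that route only yields a $3\delta$-interleaving, so it does not deliver the stated constant $2\delta$. The reason is that Theorem~\ref{bdy} treats $c$ and $h_{\#}c$ as cycles at the common scale $\varepsilon+4\delta$ and then adds their distance $2\delta$, landing in $K_n^{\varepsilon+6\delta}(X)$. Your explicit prism uses the fact that one end of each $\tau_i$ is the original $\varepsilon$-simplex, so mixed pairs cost only $\varepsilon+2\delta'$ and everything fits at scale $\varepsilon+4\delta'$. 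Together with the choice $\delta'<\delta$, this gives exactly the strict bound $<2\delta$.

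Your estimate used only $d(f(x),f(x'))<d(x,x')+2\delta'$ and $|g\circ f-\mathrm{id}|<2\delta'$. So the same argument improves Theorem~\ref{interleaf} itself to a $2\delta$-interleaving. The only thing the paper's modular route buys is reuse of Theorem~\ref{induced}, at the cost of an extra $\delta$.

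Your route is also more robust. The pairing $\sigma\leftrightarrow h\circ\sigma$ commutes with taking faces, which is precisely why $P\partial c=0$ for a cycle $c$. The arbitrary pairings allowed in Theorem~\ref{bdy} have no such compatibility, and its conclusion can actually fail. For example, take the circle $C$ of circumference $1$ with $x_k=k/20$, and write $[a,b]$ for the skeletal $1$-simplex $v_0\mapsto a$, $v_1\mapsto b$. The $0.06$-cycles $c_1=\sum_k[x_k,x_{k+1}]$ and $c_2=\sum_k[x_{k+1},x_k]$ satisfy $|c_1-c_2|\le 0.05<0.06$. Yet $c_1+c_2=\partial\sum_k\bigl([x_k,x_{k+1},x_k]+[x_k,x_k,x_k]\bigr)$, so Theorem~\ref{bdy} would force $2[c_1]_{0.12}=0$. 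This is impossible, since $[c_1]$ generates $K_1^{0.12}(C)\cong\mathbb{Z}$. Bypassing Theorem~\ref{bdy} with a face-compatible prism is therefore a real gain, not just better bookkeeping.
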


Compactness is not required for some of the results that follow, and therefore
we will state them in terms of the Hausdorff distance, with immediate
consequences for the Gromov-Hausdorff distance when the spaces are compact.

Assuming $f$ and $g$ are $\delta$-homotopy inverses (assume $\varepsilon
>2\delta$), we have the following commutative diagram, ignoring $h$ for the
moment:
\begin{equation}
\begin{tikzcd}[column sep=2.4em, row sep=2.4em] & K_{n}^{\varepsilon-2\delta}(Y) \arrow[d, "j_{\varepsilon,\varepsilon-2\delta}"'] \arrow[dl, "g_{\ast}"'] \\ K_{n}^{\varepsilon}(X) \arrow[r, dashed, "h"] \arrow[dr, "f_{\ast}"'] & K_{n}^{\varepsilon}(Y) \arrow[d, "j_{\varepsilon+2\delta, \varepsilon}"'] \\ & K_{n}^{\varepsilon+2\delta}(Y) \end{tikzcd} \label{HI}%
\end{equation}

Suppose that $j_{\varepsilon+2\delta,\varepsilon}$ and $j_{\varepsilon
,\varepsilon-2\delta}$, and hence $j_{\varepsilon+2\delta,\varepsilon-2\delta
}$, are isomorphisms. Then we may define $h:=(j_{\varepsilon+2\delta
,\varepsilon})^{-1}\circ f_{\ast}$, so that the diagram still commutes, and a
quick diagram chase shows that $h$ is also an isomorphism, and therefore also
are $g_{\ast}$ and $f_{\ast}$. In particular, $K_{n}^{\varepsilon}(X)$ is
isomorphic to $K_{n}^{\varepsilon}(Y)$. This shows the importance of knowing
when $j_{\varepsilon\delta}$ is an isomorphism, and motivates the following definition:

\begin{definition}
\label{critical}Let $X$ be a metric space. A number $\psi>0$ is called
homology (resp. skeletal) non-critical for some $n\geq0$ if there exists some
$\kappa>0$, such that for all $0<\delta<\kappa$, $\eta_{\psi+\delta
,\psi-\delta}$ is an isomorphism (resp. $j_{\psi+\delta,\psi-\delta}$) is an
isomorphism. Otherwise, $\psi$ is called \textit{homology (resp. skeletal)}
critical. The set of all homology (resp. skeletal) critical values is called
the homology (resp. skeletal) critical spectrum $\eta_{n}(X)$ (resp.
$\kappa_{n}(X)$). An interval that contains no homology (resp. skeletal)
critical values is called homology (resp. skeletal) non-critical. For
technical reasons (see below) we allow a homology non-critical interval to
have a negative lower endpoint.
\end{definition}

Since the maps $\eta_{\varepsilon\delta}$ are surjective, it follows from
Diagram (\ref{is}) that the homology critical spectrum is contained in the
skeletal critical spectrum. By definition, the set of homology non-critical
values and the set of skeletal non-critical values are both open, hence the
two critical spectra are closed in $(0,\infty)$. Note that if $D$ is a finite
metric space, then $\kappa_{n}(D)$ is finite. In fact, the set of possible
distances between points is finite. If there are no distance values strictly
between $\psi_{1}<\psi_{2}$ then for any $0<\psi_{1}<\delta<\varepsilon
<\psi_{2}$, $S_{n}^{\delta}(D)=S_{n}^{\varepsilon}(D)$, and therefore
$(\psi_{1},\psi_{2})$ is a skeletal non-critical interval.

\begin{remark}
Fix a field $\mathbb{F}$. We include the following terminology only to relate
skeletal homology to the persistence-module literature; it will not be used in
the remainder of the paper. The skeletal persistence module $\mathbb{K}%
_{n}(X;\mathbb{F}) = \bigl\{
K_{n}^{\varepsilon}(X;\mathbb{F}), j_{\varepsilon\delta} \bigr\}_{0<\delta
<\varepsilon} $ is called \emph{$q$-tame} if, for every $0<\delta<\varepsilon
$, the change-of-scale map $j_{\varepsilon\delta}: K_{n}^{\delta}%
(X;\mathbb{F}) \longrightarrow K_{n}^{\varepsilon}(X;\mathbb{F}) $ has
finite-dimensional image. Under this hypothesis, a persistence diagram can be
associated with $\mathbb{K}_{n}(X;\mathbb{F})$; see \cite{CDO}. Roughly
speaking, this diagram records homological features that persist over a
nonzero range of scales. However, $q$-tameness alone does not imply that the
skeletal critical spectrum $\kappa_{n}(X;\mathbb{F})$ is discrete, nor does it
imply that this spectrum can be recovered simply as the set of endpoints of
bars in the persistence diagram. Such a description requires additional
assumptions on the persistence module. Since we do not use persistence
diagrams or these additional assumptions below, we do not pursue this
discussion here.
\end{remark}

\begin{proposition}
\label{nci}Suppose that $X$ and $Y$ are $\delta$-homotopy equivalent, and
$(a-2\delta,b+2\delta)$ is a skeletal non-critical interval for $Y$. If
$\max\{a,2\delta\}<a^{\prime}<b^{\prime}<b$ then $(a^{\prime},b^{\prime})$ is
skeletal non-critical in $X$.
\end{proposition}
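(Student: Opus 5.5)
The plan is to reduce everything to the diagram-chase argument sketched after Diagram (\ref{HI}), applied at the two scales $\psi-\eta$ and $\psi+\eta$. Then we use naturality of the induced maps with respect to change of scale to transport the isomorphism $j^{Y}$ back to $j^{X}$.

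First, a preliminary fact about $Y$. If $(a-2\delta,b+2\delta)$ is skeletal non-critical, then for \emph{any} $s<t$ in that interval the map $j^{Y}_{t,s}$ is an isomorphism. To see this, cover $[s,t]$ by finitely many intervals $(\psi_i-\kappa_i,\psi_i+\kappa_i)$ on which $j^Y_{\psi_i+\delta',\psi_i-\delta'}$ is an isomorphism for all $0<\delta'<\kappa_i$, as in Definition \ref{critical}. By a Lebesgue-number argument, $j^Y_{t,s}$ is a composition of maps of the form $j^Y_{\psi_i+\delta',\psi_i-\delta'}$, because the $j$'s form an inverse system. Each such map is an isomorphism, hence so is $j^{Y}_{t,s}$. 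I expect this bookkeeping to be routine.

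Now fix $\psi\in(a',b')$ and choose $\kappa>0$ so small that $\psi\pm\kappa\in(a',b')$. Let $0<\eta<\kappa$ and put $\varepsilon_{1}=\psi-\eta$ and $\varepsilon_{2}=\psi+\eta$. Then $\varepsilon_i>a'>2\delta$, so $\varepsilon_i-2\delta>0$ and the modulus $\omega(\varepsilon)=\max\{\varepsilon-2\delta,0\}$ is positive where needed. Moreover, the scales $\varepsilon_i\pm2\delta$ lie in $(a-2\delta,b+2\delta)$, because $a<a'<\varepsilon_i<b'<b$. For each $i$, Diagram (\ref{HI}) at $\varepsilon=\varepsilon_i$ has the following properties:
\begin{itemize}
\item The vertical $Y$-maps $j_{\varepsilon_i+2\delta,\varepsilon_i}$ and $j_{\varepsilon_i,\varepsilon_i-2\delta}$ are isomorphisms by the preliminary fact.
\item The paragraph following Diagram (\ref{HI}) then shows that $f_{\ast}:K_n^{\varepsilon_i}(X)\to K_n^{\varepsilon_i+2\delta}(Y)$ is an isomorphism, and likewise $g_{\ast}:K_n^{\varepsilon_i-2\delta}(Y)\to K_n^{\varepsilon_i}(X)$.
\end{itemize}
This step relies on the commutativity relations $f_\ast\circ g_\ast=j$ and $g_\ast\circ f_\ast=j$ coming from Theorem \ref{induced}, exactly as the paper uses them there.

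Finally, the induced maps commute with change of scale: $f_{\ast}\circ j^{X}_{\varepsilon_2,\varepsilon_1}=j^{Y}_{\varepsilon_2+2\delta,\varepsilon_1+2\delta}\circ f_{\ast}$. Both sides are induced at the chain level by $\sigma\mapsto f\circ\sigma$ followed by inclusion. In this square both horizontal $f_\ast$'s are isomorphisms by the previous step, and $j^{Y}_{\varepsilon_2+2\delta,\varepsilon_1+2\delta}$ is an isomorphism by the preliminary fact. Hence
\[
j^{X}_{\psi+\eta,\psi-\eta}=f_{\ast}^{-1}\circ j^{Y}_{\varepsilon_2+2\delta,\varepsilon_1+2\delta}\circ f_{\ast}
\]
is an isomorphism. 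Since this holds for all $\eta<\kappa$, the value $\psi$ is skeletal non-critical for $X$, and $\psi$ was arbitrary in $(a',b')$.

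The main obstacle is the step showing that $f_\ast$ is an isomorphism at each scale. This requires the compositions $g_\ast f_\ast$ and $f_\ast g_\ast$ to equal change-of-scale maps with exactly the shifts shown in Diagram (\ref{HI}). A careful use of Theorem \ref{induced} only gives shifts of size up to $6\delta$, as in the proof of Theorem \ref{interleaf}. If those larger shifts push scales outside $(a-2\delta,b+2\delta)$, my first fix would be to shrink to a compact subinterval of $(a',b')$ and use the margins $a'-\max\{a,2\delta\}$ and $b-b'$. Otherwise, I would argue injectivity and surjectivity of $j^{X}$ separately, each using only one composition.
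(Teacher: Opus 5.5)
The gap is exactly the step you flag as the main obstacle, and it cannot be repaired. The size of the shifts is not the problem. Diagram (\ref{HI}) does commute with exactly the stated shifts: the prism between $fg\sigma$ and $\sigma$ has diameter $<\varepsilon+2\delta$, because the cross distances are $<2\delta+(\varepsilon-2\delta)$. So $f_\ast g_\ast=j^{Y}_{\varepsilon+2\delta,\varepsilon-2\delta}$.

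An isomorphism $f_\ast g_\ast$ only makes $f_\ast$ surjective and $g_\ast$ injective. Injectivity of $f_\ast:K_n^{\varepsilon}(X)\to K_n^{\varepsilon+2\delta}(Y)$ would have to come from the other composite, $g_\ast f_\ast=j^{X}_{\varepsilon+4\delta,\varepsilon}$. That is a change-of-scale map of $X$ itself, so the argument becomes circular.

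The same circularity appears if you treat injectivity and surjectivity of $j^X_{ts}$ separately. For $a'<s<t<b'$ you only get $\ker j^X_{ts}\subseteq\ker j^X_{s+4\delta,s}$ and $K_n^t(X)=\operatorname{im}j^X_{ts}+\ker j^X_{t+4\delta,t}$. Shrinking the interval does not remove this $4\delta$ collar. Abstractly, a $2\delta$-interleaving lets $X$ carry bars of length $<4\delta$ that $Y$ does not see.

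Such bars really occur, so the statement is false as written. The paper's own proof via Diagram (\ref{CV}) rests on the same unjustified claim that $f_\ast$ and $g_\ast$ are isomorphisms. Here is a counterexample.

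\begin{itemize}
\item Fix $0<\delta<1/4$. Let $Y=\{y_1,\dots,y_4\}$ have all distances equal to $1$. Let $X=\{x_1,\dots,x_4\}$ have $d(x_i,x_{i+1})=1$ (indices mod $4$) and $d(x_1,x_3)=d(x_2,x_4)=1+2\delta$.
\item Set $f(x_i)=y_i$ and $g(y_i)=x_i$. Then $d_Y(fx,fx')\le d_X(x,x')$ and $d_X(gy,gy')\le d_Y(y,y')+2\delta$, and $fg$ and $gf$ are identities. Hence $f,g$ are $\delta$-homotopy inverses.
\item Every $VR_\varepsilon(Y)$ is either discrete or a full $3$-simplex. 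By Theorem \ref{VR}, $K_1^\varepsilon(Y)=0$ for all $\varepsilon$, so every interval is skeletal non-critical for $Y$ in degree $1$.
\item $VR_\varepsilon(X)$ is a $4$-cycle for $1<\varepsilon\le 1+2\delta$, and discrete or a full simplex otherwise. So $K_1^\varepsilon(X)\cong\mathbb{Z}$ exactly for $\varepsilon\in(1,1+2\delta]$, and both $1$ and $1+2\delta$ are skeletal critical for $X$.
\item Take $a=1/2$, $b=2$, $a'=3/4$, $b'=3/2$. Every hypothesis holds, yet $(a',b')$ contains both critical values of $X$.
\end{itemize}

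In particular, $f_\ast:K_1^\varepsilon(X)\cong\mathbb{Z}\to K_1^{\varepsilon+2\delta}(Y)=0$ is not injective, even though every $j^Y$ is an isomorphism. Any correct version needs an extra hypothesis on $X$, or a weaker conclusion such as the two inclusions above.
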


\begin{proof}
Using Diagram (\ref{HI}), we obtain the following diagram, in which all maps,
except \textit{a priori} $j_{b^{\prime}a^{\prime}}$, are isomorphisms.
\begin{equation}
\begin{tikzcd}[column sep=small, row sep=2.2em] & K_{n}^{a^{\prime}-2\delta}(Y) \arrow[d, "j_{a^{\prime},a^{\prime}-2\delta}"] \\ K_{n}^{a^{\prime}}(X) \arrow[ur, "g_{\ast}"] \arrow[d, "j_{b^{\prime},a^{\prime}}"'] & K_{n}^{a^{\prime}}(Y) \arrow[d, "j_{b^{\prime},a^{\prime}}"] \\ K_{n}^{b^{\prime}}(X) \arrow[dr, "f_{\ast}"'] & K_{n}^{b^{\prime}}(Y) \arrow[d, "j_{b^{\prime}+2\delta,b^{\prime}}"] \\ & K_{n}^{b^{\prime}+2\delta}(Y) \end{tikzcd} \label{CV}%
\end{equation}
But then $j_{b^{\prime}a^{\prime}}$ must also be an isomorphism. Since
$a^{\prime}$ and $b^{\prime}$ were arbitrary, we may also conclude that
$j_{b^{\prime\prime}a^{\prime\prime}}$ is an isomorphism for any subinterval
$(a^{\prime\prime},b^{\prime\prime})$ of $(a^{\prime},b^{\prime})$. Therefore,
$(a^{\prime},b^{\prime})$ is skeletal non-critical. We next give a very short
proof of the following known stability theorem (see Theorem 5.2 \cite{CDO}).
\end{proof}

\begin{theorem}
\label{spectrum}If $X$ and $Y$ are $\delta$-homotopy equivalent then
$d_{H}(\kappa_{n}(X),\kappa_{n}(Y))<2\delta$.
\end{theorem}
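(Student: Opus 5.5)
The plan is to deduce the statement directly from Proposition \ref{nci}, used symmetrically in $X$ and $Y$. Since the Hausdorff distance is symmetric and the hypothesis of a $\delta$-homotopy equivalence is symmetric in $X$ and $Y$, it suffices to show that every skeletal critical value $\psi\in\kappa_n(X)$ lies within distance less than $2\delta$ of $\kappa_n(Y)$. In fact we aim for the slightly stronger containment: every $\psi\in\kappa_n(X)$ has some point of $\kappa_n(Y)$ in the closed interval $[\psi-2\delta,\psi+2\delta]$, and then argue that the distance is strictly less than $2\delta$.

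The key step is the contrapositive. Suppose $\psi\in\kappa_n(X)$ but the interval $[\psi-2\delta,\psi+2\delta]$ meets no point of $\kappa_n(Y)$. Because $\kappa_n(Y)$ is closed in $(0,\infty)$, there is some $\eta>0$ such that $(\psi-2\delta-\eta,\psi+2\delta+\eta)$ is skeletal non-critical for $Y$. Here we must handle the boundary case $\psi-2\delta\le 0$ using the convention that intervals may be truncated at $0$, together with $\psi>2\delta$ being needed to use Proposition \ref{nci}; see below. Set $a=\psi-\eta$ and $b=\psi+\eta$. Then $(a-2\delta,b+2\delta)$ is skeletal non-critical for $Y$. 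Whenever $\max\{a,2\delta\}<a'<\psi<b'<b$, Proposition \ref{nci} makes $(a',b')$ skeletal non-critical for $X$. This contains $\psi$, a contradiction.

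The main obstacle is twofold.
\begin{itemize}
\item \textbf{The constraint $a'>2\delta$.} We need $\psi>2\delta$ to choose $a'<\psi$. If $\psi\le 2\delta$, then $\psi$ is already within distance $2\delta$ of any small critical value of $Y$, but $\kappa_n(Y)$ might have no points near $0$. The first thing I would try is to note that small scales are handled by the convention that non-critical intervals may have low endpoints. Failing that, I would restrict attention to critical values above $2\delta$, which is presumably what the paper intends.
\item \textbf{Strictness $<2\delta$ versus $\le 2\delta$.} To get the strict inequality, I would use closedness of $\kappa_n(Y)$ and the openness in the hypotheses: the $\delta$-homotopy equivalence condition has strict inequalities, so $f,g$ are also $\delta'$-homotopy inverses for some $\delta'<\delta$. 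Running the argument with $\delta'$ gives distance at most $2\delta'<2\delta$.
\end{itemize}

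Finally, swapping the roles of $X$ and $Y$ gives the reverse containment, and the Hausdorff distance bound follows.
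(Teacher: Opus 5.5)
Your argument is the paper's argument. The paper also argues by contradiction: it places a $Y$-non-critical window of radius more than $2\delta$ around some $\psi\in\kappa_n(X)$ and invokes Proposition \ref{nci} to obtain a non-critical interval about $\psi$. The genuine gap is that this key step is false, and the statement falls with it.

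Diagram (\ref{CV}) at best shows that $f_\ast\circ j_{b'a'}\circ g_\ast$ agrees with a change-of-scale isomorphism of $Y$. That forces $g_\ast$ to be injective and $f_\ast$ surjective, but says nothing about $j_{b'a'}$ itself: a class of $X$ that is born and killed strictly between $a'$ and $b'$ is invisible to $f_\ast$ and $g_\ast$.

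Concretely, let $X=\{(0,0),(1,0),(0,w),(1,w)\}\subset\mathbb{R}^2$ with $0<w<\frac{1}{100}$, let $Y=\{(0,0),(1,0)\}$, let $f$ be the vertical projection and $g$ the inclusion. Then $d(f(x),f(y))\le d(x,y)$, $|f\circ g-\mathrm{id}_Y|=0$ and $|g\circ f-\mathrm{id}_X|=w<2w$, so $f,g$ are $w$-homotopy inverses. By Theorem \ref{VR}, $K_1^\varepsilon(X)$ is $0$ for $\varepsilon\le 1$, is $\mathbb{Z}$ for $1<\varepsilon\le\sqrt{1+w^2}$ (the Rips complex is a $4$-cycle with no $2$-simplices), and is $0$ beyond that. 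So $1\in\kappa_1(X)$. But $K_1^\varepsilon(Y)=0$ for all $\varepsilon$, so every interval is non-critical for $Y$. Proposition \ref{nci}, with $\delta=w$, $a=\frac12$, $b=2$, would then make $(0.99,1.001)$ non-critical for $X$, which is false.

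To get a nonempty spectrum for $Y$, add to both spaces the vertices $Q$ of a square of side $10$ at distance $1000$, extending $f$ and $g$ by the identity of $Q$. These are still $w$-homotopy inverses, $1\in\kappa_1(X\cup Q)$, and the point of $\kappa_1(Y\cup Q)$ nearest to $1$ is $10$. Hence $d_H\ge 9$ while $2\delta=2w$. What survives is interleaving stability, as in Theorem \ref{interleaf} and Corollary \ref{ilc}, where short bars may be matched to the diagonal. The set of bar endpoints is not Hausdorff-stable.

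Two further steps of yours fail independently of this.
\begin{enumerate}
\item Restricting to $\psi>2\delta$ changes the statement rather than proving it, and small critical values really do escape. By Example \ref{pac} and Lemma \ref{ill}, the circle $C$ and the gapped circle $P$ with gap $\lambda<\frac16$ are $\delta$-homotopy equivalent for every $\delta>\lambda/2$. Yet for $\delta$ close to $\lambda/2$, the point $\lambda\in\kappa_1(P)$ is at distance $\frac13-\lambda>2\delta$ from $\kappa_1(C)=\{\frac13\}$.
\item Your strictness device does not work. Only condition 2 of Definition \ref{dhi} is open in $\delta$. Condition 1 is equivalent to $d(f(x),f(y))\le d(x,y)+2\delta$ for all $x,y$, which need not hold for any $\delta'<\delta$.
\end{enumerate}
The paper's proof has the same two lacunae: it ignores the constraint $a'>2\delta$, and it passes from $d_H\ge 2\delta$ to a point at distance greater than $2\delta$. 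So what is missing is not a local repair but a correct statement, e.g.\ an interleaving bound, or a Hausdorff bound under hypotheses that exclude short-lived classes.
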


\begin{proof}
Suppose not. Then, without loss of generality, there is some $\psi\in
\kappa_{n}(X)$ such that there is no skeletal critical value of $Y$ in
$(\psi-2\delta^{\prime},\psi+2\delta^{\prime})$, for some $\delta^{\prime
}>\delta$. Suppose first that $\psi>0$. According to the above discussion,
there would have to be some open skeletal non-critical interval about $\psi$,
a contradiction. If $\psi=0$ then by definition there is some $\psi^{\prime
}>0$ in $\kappa_{n}(X)$ arbitrarily close to $\psi=0$, and the previous
argument applies.
\end{proof}

In Theorem 1.3 of \cite{MZ}, M\'{e}moli and Zhou proved an analogous stability
theorem for a persistent fundamental group, for compact, semi-locally simply
connected geodesic spaces. As we will explain after a bit more background in
the next section, for metric spaces with abelian fundamental groups, Theorem
\ref{spectrum}, $n=1$, generalizes their theorem by removing the additional
assumptions on the metric spaces.

Now suppose that $X_{i}\rightarrow Y$ in the Gromov-Hausdorff metric, where
$X_{i},Y$ are compact metric spaces. As was shown by Gromov, we may simply
assume that the compact metric spaces $X_{i},Y$ are subsets of a single
ambient metric space, and convergence is in the Hausdorff metric. The first
part of the next theorem is immediate from Theorem \ref{spectrum}, and the
second part is immediate from Diagram (\ref{CV}).

\begin{theorem}
\label{stab}Suppose $X_{i}\rightarrow Y$ in the Gromov-Hausdorff metric and
$n\geq1$.

\begin{enumerate}
\item The sets $\kappa_{n}(X_{i})$ are Cauchy in the Hausdorff metric.

\item If $\varepsilon>0$ is a skeletal non-critical value for $Y$, then for
all large enough $i$, $K_{n}^{\varepsilon}(X_{i})$ is isomorphic to
$K_{n}^{\varepsilon}(Y)$.
\end{enumerate}
\end{theorem}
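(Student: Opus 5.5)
Both parts reduce to results already in hand once we fix a common ambient space. Following Gromov, we take all $X_i$ and $Y$ to be subsets of one metric space $Z$, with $\delta_i:=d_H(X_i,Y)\to0$; if some $\delta_i=0$ we replace it by any small positive number exceeding it, which changes nothing below. By Lemma \ref{ill}, $X_i$ and $Y$ are $\delta_i$-homotopy equivalent. Likewise, for any $i,j$ the triangle inequality for $d_H$ in $Z$ gives $d_H(X_i,X_j)<\delta_i+\delta_j$, so $X_i$ and $X_j$ are $(\delta_i+\delta_j)$-homotopy equivalent.

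For part 1, we apply Theorem \ref{spectrum} to the pair $X_i,X_j$ and get $d_H(\kappa_n(X_i),\kappa_n(X_j))<2(\delta_i+\delta_j)$. This tends to $0$ as $i,j\to\infty$, so the sequence is Cauchy. One should check the degenerate case where some $\kappa_n$ is empty; the Hausdorff distance bound from Theorem \ref{spectrum} already covers it, with the usual convention that distances may be infinite. That convention forces the spectra to be eventually all empty or all nonempty together.

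For part 2, let $\varepsilon$ be skeletal non-critical for $Y$. Since the non-critical set is open, there is $\kappa>0$ such that $(\varepsilon-\kappa,\varepsilon+\kappa)$ contains no skeletal critical values of $Y$. The key step is to verify that then every $j_{ts}$ with $\varepsilon-\kappa<s<t<\varepsilon+\kappa$ is an isomorphism, i.e. that a non-critical interval is one on which all change-of-scale maps are isomorphisms. This is the notion used implicitly in Diagram (\ref{HI}) and Proposition \ref{nci}. To prove it, cover $[s,t]$ by finitely many intervals $(\psi-\delta_\psi,\psi+\delta_\psi)$ on which $j_{\psi+\delta,\psi-\delta}$ is an isomorphism. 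Each $j_{ba}$ with $\psi-\delta_\psi<a<b<\psi+\delta_\psi$ factors through such an isomorphism, so it is injective and surjective by the two-out-of-three argument using the inverse-system property. Composing along an overlapping chain of these intervals from $s$ to $t$ then gives that $j_{ts}$ is an isomorphism. This compactness argument is the main obstacle; the rest is formal.

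Now choose $i$ large so that $4\delta_i<\kappa/2$, and pick $a<\varepsilon<b$ with $a-2\delta_i>\varepsilon-\kappa$ and $b+2\delta_i<\varepsilon+\kappa$, and also $a>2\delta_i$. In Diagram (\ref{HI}) at scale $\varepsilon$, with $X=X_i$ and $\delta=\delta_i$, the maps $j_{\varepsilon+2\delta_i,\varepsilon}$ and $j_{\varepsilon,\varepsilon-2\delta_i}$ on $Y$ are isomorphisms because their scales lie in the non-critical interval. The diagram chase after (\ref{HI}), equivalently Diagram (\ref{CV}) with $a'=b'=\varepsilon$, then shows that $f_*:K_n^\varepsilon(X_i)\to K_n^{\varepsilon+2\delta_i}(Y)$ is an isomorphism. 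Composing with the isomorphism $(j_{\varepsilon+2\delta_i,\varepsilon})^{-1}$ yields $K_n^\varepsilon(X_i)\cong K_n^\varepsilon(Y)$ for all large $i$.
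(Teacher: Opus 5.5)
Your route is the paper's own: Theorem \ref{spectrum} for part 1, and the chase after Diagrams (\ref{HI}) and (\ref{CV}) for part 2. The decisive step of part 2 does not hold. Diagram (\ref{HI}) supplies only the relation $f_{\ast}\circ g_{\ast}=j_{\varepsilon+2\delta_i,\varepsilon-2\delta_i}$ on $Y$. That this map is an isomorphism gives only that $g_{\ast}$ is injective and $f_{\ast}$ is surjective. Injectivity of $f_{\ast}:K_n^{\varepsilon}(X_i)\to K_n^{\varepsilon+2\delta_i}(Y)$ would need control of the change-of-scale maps of $X_i$ itself (e.g.\ of $g_{\ast}\circ f_{\ast}$, which is one). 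Non-criticality of $\varepsilon$ for $Y$ gives no such control. Setting $h=(j_{\varepsilon+2\delta_i,\varepsilon})^{-1}\circ f_{\ast}$, what the chase does prove is that $h\circ g_{\ast}=j_{\varepsilon,\varepsilon-2\delta_i}$ is an isomorphism, so $K_n^{\varepsilon}(Y)$ is a direct summand of $K_n^{\varepsilon}(X_i)$. The claimed isomorphism is false, and Example \ref{pac} gives a counterexample. Take $Y=P_0$ (gap $\tfrac13$) and $X_i=P_i$ (gaps $g_i\nearrow\tfrac13$) inside the circle $C$ of circumference $1$, with $n=1$ and $\varepsilon=\tfrac13$. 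For $\varepsilon'\le\tfrac13$, two points of $P_0$ at distance $<\varepsilon'$ are at that same distance along the arc. So $S_{\ast}^{\varepsilon'}(P_0)$ is the skeletal complex of a Euclidean interval, and $K_1^{\varepsilon'}(P_0)=0$. For $\varepsilon'>\tfrac13$, every class is a multiple of the loop through the gap, and that loop is filled by the $\varepsilon'$-simplex on the points at arc-length $0,\tfrac13,\tfrac23$. Thus $K_1^{\varepsilon'}(P_0)=0$ for all $\varepsilon'$, and $\tfrac13$ is non-critical for $Y$. Yet $K_1^{1/3}(P_i)\neq0$ for every $i$. Three points of $C$ with pairwise distances $<\tfrac13$ lie in an arc of length $<\tfrac13$, so summing signed short displacements gives a homomorphism $K_1^{1/3}(C)\to\mathbb{Z}$. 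The cycle running along $P_i$ and across its gap has value $1$ there (viewed in $C$), so its class in $K_1^{1/3}(P_i)$ is nonzero.

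Part 1 fails for the same reason. The example contradicts Theorem \ref{spectrum}, since $\tfrac13\in\kappa_1(P_i)$ while $\kappa_1(P_0)=\emptyset$. That theorem rests on Proposition \ref{nci}, which asserts without justification that $f_{\ast}$ and $g_{\ast}$ in Diagram (\ref{CV}) are isomorphisms. Concretely, the sequence $P_0,P_1,P_0,P_2,\dots$ converges to $P_0$, but its $1$-spectra alternate between $\emptyset$ and sets containing $\tfrac13$, so they are not Cauchy. Attaching a distant large circle to every term gives the same failure with nonempty spectra. Separately, your compactness proof that every $j_{ts}$ on a non-critical interval is an isomorphism does not work as written. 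The symmetric isomorphisms $j_{\psi+\delta,\psi-\delta}$ do not by themselves control $j_{ba}$. For example, take the inverse system that is $\mathbb{Z}$ with identity maps except for $\mathbb{Z}^2$ at a single scale $\psi$, entered by inclusion and left by projection. It is non-critical at every scale, yet $j_{\psi a}$ is not surjective for $a<\psi$. The lemma is true for skeletal homology, but only because $K_n^{\varepsilon}(X)=\operatorname{colim}_{\delta<\varepsilon}K_n^{\delta}(X)$ (the diameter bound is strict), and your argument never uses this.
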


According to Theorem \ref{stab}, the sets $\kappa_{n}(X_{i})$ converge in the
Hausdorff metric to a unique compact set $\kappa_{n}$, but it is possible that
$\kappa_{n}$ is not contained $\kappa_{n}(Y)$ (Example \ref{pac}). That is,
critical values may \textquotedblleft vanish\textquotedblright\ in the limit.

Recall that a subset $D$ of a metric space $X$ is $\mathit{\delta}%
$\textit{-dense for some }$\delta>0$, if for every $x\in X$ there is some
$d\in D$ such that $d(x,d)<\delta$. Under some circumstances, it is possible
to obtain information on the skeletal homology of $X$ from the skeletal
homology of $D$.

\begin{theorem}
\label{cycle}Suppose that $D$ is $\delta$-dense in a metric space $X$ and
$\varepsilon>0$. Then $i_{\ast}:K_{n}^{\varepsilon}(D)\rightarrow
K_{n}^{\varepsilon}(X)$ is injective (resp. surjective) if and only if
$j_{\varepsilon+2\delta,\varepsilon}:K_{n}^{\varepsilon}(D)\rightarrow
K_{n}^{\varepsilon+2\delta}(D)$ is injective (resp. $j_{\varepsilon
,\varepsilon-2\delta}:K_{n}^{\varepsilon-2\delta}(X)\rightarrow K_{n}%
^{\varepsilon}(X)$ is surjective).
\end{theorem}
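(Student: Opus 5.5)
The plan is to build a \emph{nearest-point retraction} $p:X\rightarrow D$ and reduce both statements to two factorization identities.

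\textbf{The retraction.} For each $x\in X$ choose $p(x)\in D$ with $d(x,p(x))<\delta$. For $x\in D$ take $p(x)=x$, so that $p\circ i=\operatorname{id}_{D}$ exactly. By the triangle inequality $p$ has modulus of continuity $\omega(\varepsilon)=\max\{\varepsilon-2\delta,0\}$. Also $i$ is $1$-Lipschitz and $|i\circ p-\operatorname{id}_{X}|<\delta$. This is exactly the situation of Lemma \ref{ill}.

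\textbf{First identity (on $D$).} By the functoriality remark after Definition \ref{moc},
\[
p_{\ast}\circ i_{\ast}=(p\circ i)_{\ast}=(\operatorname{id}_{D})_{\ast}=j_{\varepsilon+2\delta,\varepsilon}:K_{n}^{\varepsilon}(D)\rightarrow K_{n}^{\varepsilon+2\delta}(D).
\]
This holds already at the chain level, since $p\circ i\circ\sigma=\sigma$ for every simplex $\sigma$. It shows at once that injectivity of $j_{\varepsilon+2\delta,\varepsilon}$ forces injectivity of $i_{\ast}$.

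\textbf{Second identity (on $X$).} I will show $i_{\ast}\circ p_{\ast}=j_{\varepsilon,\varepsilon-2\delta}:K_{n}^{\varepsilon-2\delta}(X)\rightarrow K_{n}^{\varepsilon}(X)$. Theorem \ref{induced} is too lossy here, since it only lands in scale $\varepsilon+\delta$. Instead I will use the skeletal prism operator
\[
P(\sigma)=\sum_{i}(-1)^{i}\,[\sigma(v_{0}),\ldots,\sigma(v_{i}),p\sigma(v_{i}),\ldots,p\sigma(v_{n})].
\]
The standard combinatorial identity $\partial P+P\partial=p_{\#}-\operatorname{id}$ carries over verbatim. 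For the scale, if $\sigma$ has diameter $<\varepsilon-2\delta$, every pair of vertices of a prism simplex is at distance less than $(\varepsilon-2\delta)+2\delta=\varepsilon$. Here the estimate $d(\sigma(a),p\sigma(b))<\varepsilon-\delta$ is the one involving mixed vertices, and pairs of the form $p\sigma(a),p\sigma(b)$ are $<\varepsilon$ by the modulus of $p$. So $P$ lands in $S_{n+1}^{\varepsilon}(X)$ and the identity follows. It shows that surjectivity of $j_{\varepsilon,\varepsilon-2\delta}$ forces surjectivity of $i_{\ast}$, since every class of $K_{n}^{\varepsilon}(X)$ is then $i_{\ast}$ of $p_{\ast}$ of something.

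\textbf{Converses (the expected obstacle).} Here I would try to get equivalence by also using the mixed compositions. Suppose $i_{\ast}$ is injective and $[c]_{\varepsilon}\in\ker j_{\varepsilon+2\delta,\varepsilon}$, so $c=\partial e$ with $e$ an $(\varepsilon+2\delta)$-chain in $D$. I would try to push $e$ into $X$ and subdivide or retract it so as to produce an $\varepsilon$-chain in $X$ bounding $c$. Suppose instead $i_{\ast}$ is surjective. Each class in $K_{n}^{\varepsilon}(X)$ is then represented by a cycle $c$ in $D$. I would try to replace $c$ by an $(\varepsilon-2\delta)$-cycle homologous to it in $K_{n}^{\varepsilon}(X)$, perhaps via Theorem \ref{bdy} applied to a perturbation of $c$ that moves points of $D$ into $X$.

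I am least confident that these converse directions hold at exactly the stated scales. If they resist, I would first check the statement against small examples, such as a finite $\delta$-net in a circle, to see whether only the implications established by the two factorizations are genuinely true.
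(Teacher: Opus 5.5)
Your two factorizations are exactly the paper's argument. It uses the same retraction $R^{D}$ (your $p$) and the identities $R^{D}_{\ast}\circ i_{\ast}=j_{\varepsilon+2\delta,\varepsilon}$ and $i_{\ast}\circ R^{D}_{\ast}=j_{\varepsilon,\varepsilon-2\delta}$. On the second identity you are more careful than the paper, which only appeals to Proposition \ref{nci}. Theorem \ref{induced} alone would give equality only in $K_{n}^{\varepsilon+\delta}(X)$. It is your sharper prism estimate (mixed vertices at distance $<\varepsilon-\delta$) that puts the chain homotopy in $S_{n+1}^{\varepsilon}(X)$. So you have fully proved two implications: $j_{\varepsilon+2\delta,\varepsilon}$ injective implies $i_{\ast}$ injective, and (for $\varepsilon>2\delta$) $j_{\varepsilon,\varepsilon-2\delta}$ surjective implies $i_{\ast}$ surjective.

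The converse halves are not proved by your sketches, and no proof exists, because both are false; your suspicion was right. Take $D=X$, which is $\delta$-dense for every $\delta>0$, so $i_{\ast}$ is the identity and hence bijective. Let $X$ be the four vertices of a unit square, as in the example at the end of Section 4. Its Vietoris--Rips complex at scale $\varepsilon$ is:
\begin{itemize}
\item discrete for $\varepsilon\leq1$;
\item a $4$-cycle for $1<\varepsilon\leq\sqrt{2}$;
\item a full $3$-simplex for $\varepsilon>\sqrt{2}$.
\end{itemize}
By Theorem \ref{VR}, $K_{1}^{\varepsilon}(X)$ is $0$, $\mathbb{Z}$, $0$ on these ranges. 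With $\varepsilon=1.4$ and $\delta=0.1$, the map $j_{1.6,1.4}:\mathbb{Z}\rightarrow0$ is not injective. With $\varepsilon=1.1$ and $\delta=0.1$, the map $j_{1.1,0.9}:0\rightarrow\mathbb{Z}$ is not surjective.

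The same failures occur with $D\subsetneq X$. Adjoin $(-0.01,-0.01)$ to the square with vertices $(0,0),(1,0),(1,1),(0,1)$ in $\mathbb{R}^{2}$, and let $D$ be the four vertices. At scales $1.1$ and $1.4$ the new point is joined only to $(0,0),(1,0),(0,1)$ and cones off the two sides at $(0,0)$, so $i_{\ast}$ remains an isomorphism.

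The paper's own proof has the same limitation: from the factorizations only the ``if'' directions follow, despite its assertion that the statement ``now follows''. The correct theorem is the pair of one-way implications, which your argument establishes. Instead of pursuing the converse strategies, record this counterexample and state the result in that form.
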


\begin{proof}
Define a function $R^{D}:X\rightarrow D$ by letting $R^{D}(d)=d$ for any $d\in
D$, and for $x\in X$ letting $R^{D}(x)$ be any $d\in D$ such that
$d(x,d)<\delta$. That is, $R^{D}$ is a kind of \textquotedblleft metric
deformation retraction\textquotedblright. If $i:D\rightarrow X$ is the
inclusion and $i^{D}:D\rightarrow D$ is the identity, then as in singular
homology we may consider the compositions $R_{\ast}^{D}\circ i_{\ast}=i_{\ast
}^{D}$ and $i_{\ast}\circ R_{\ast}^{D}=R_{\ast}^{D}$, but the moduli of
continuity alter the conclusions. By the triangle inequality, $\omega
(\varepsilon)=\varepsilon-2\delta$ is a modulus of continuity for $R^{D}$, and
$i$ is $1$-Lipschitz. As in the proof of Proposition \ref{nci},
\[
R_{\ast}^{D}\circ i_{\ast}=j_{\varepsilon+2\delta,\varepsilon}:K_{n}%
^{\varepsilon}(D)\rightarrow K_{n}^{\varepsilon+2\delta}(D)
\]
The first statement now follows. The second part similarly follows,
considering the domain and range of
\[
i_{\ast}\circ R_{\ast}^{D}=j_{\varepsilon,\varepsilon-2\delta}:K^{\varepsilon
-2\delta}(X)\rightarrow K^{\varepsilon}(X)
\]

\end{proof}

\begin{theorem}
\label{dens}Let $X$ be a metric space and $\varepsilon>0$ be such that $X$ may
be covered by $N$ $\frac{\varepsilon}{4}$-balls. Then $H_{n}^{\varepsilon}(X)$
has a generating set with at most $N^{n+1}$ elements. In particular, if $X$ is
compact, then $H_{n}^{\varepsilon}(X)$ is finitely generated for all $n\geq1$
and $\varepsilon>0$.
\end{theorem}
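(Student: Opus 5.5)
The plan is to push every real class onto a finite vertex set using a "nearest center" map, and then count. Fix centers $x_1,\dots,x_N$ with $X=\bigcup_i B(x_i,\varepsilon/4)$, and let $D=\{x_1,\dots,x_N\}$. As in the proof of Theorem \ref{cycle}, define $R^D:X\rightarrow D$ by sending each $x$ to some $x_i$ with $d(x,x_i)<\varepsilon/4$. Then $|R^D-\operatorname{id}_X|<\varepsilon/4$, and $d(R^D(x),R^D(y))<d(x,y)+\varepsilon/2$.

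\emph{Step 1: represent each real class by a fine continuous cycle.} Let $\alpha\in H_n^{\varepsilon}(X)$, so $\alpha=\rho_{\varepsilon}([z])$ for some $[z]\in H_n(X)$. Since $\beta_{\varepsilon/4}$ is an isomorphism (Theorem \ref{rh}), we may choose a continuous cycle $c$ representing $[z]$ whose simplices all have diameter $<\varepsilon/4$. By the compatibility of $\beta$ and $r$ with inclusions, $\rho_{\varepsilon}=j_{\varepsilon,\varepsilon/4}\circ\rho_{\varepsilon/4}$. Hence $\alpha=[r(c)]_{\varepsilon}$, where $r(c)$ is an $\varepsilon/4$-cycle. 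Checking this naturality of $\rho$ is a routine diagram chase.

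\emph{Step 2: move the cycle onto $D$.} Put $c'=\sum k_i\,R^D\circ r(\sigma_i)$. Each simplex $R^D\circ r(\sigma_i)$ has image diameter $<\varepsilon/4+\varepsilon/2=3\varepsilon/4$. Because $R^D_{\#}$ is a chain map, $c'$ is a $3\varepsilon/4$-cycle, and so is $r(c)$. Each simplex moves by less than $\varepsilon/4$ in the uniform metric, so by the defining property of the ultradiamond metric $|r(c)-c'|<\varepsilon/4$. Theorem \ref{bdy}, applied at scale $3\varepsilon/4$ with $\delta=\varepsilon/4$, then gives $\alpha=[r(c)]_{\varepsilon}=[c']_{\varepsilon}$. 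The main obstacle is getting these scale bookkeeping inequalities right; that is exactly why the cycle is subdivided down to diameter $\varepsilon/4$ rather than $\varepsilon/2$.

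\emph{Step 3: count.} The cycle $c'$ lies in $Z$, the group of $\varepsilon$-cycles of the subcomplex of $S_n^{\varepsilon}(X)$ spanned by simplices $V^n\rightarrow D$. There are at most $N^{n+1}$ such simplices, so $Z$ is a subgroup of a free abelian group of rank $\le N^{n+1}$. Hence $Z$ is free of rank $\le N^{n+1}$, and its image $A\subset K_n^{\varepsilon}(X)$ is generated by at most $N^{n+1}$ elements. Step 2 shows $H_n^{\varepsilon}(X)\subset A$. A subgroup of an abelian group generated by $m$ elements is itself generated by at most $m$ elements: pull it back to $\mathbb{Z}^m$, where the preimage is free of rank $\le m$, and take images. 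Therefore $H_n^{\varepsilon}(X)$ has a generating set with at most $N^{n+1}$ elements.

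Finally, if $X$ is compact, it is covered by finitely many $\varepsilon/4$-balls for every $\varepsilon>0$, so $H_n^{\varepsilon}(X)$ is finitely generated.
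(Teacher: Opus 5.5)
Your proof is correct and is essentially the paper's argument: reduce to an $\varepsilon/4$-cycle, move it onto the centers, apply Theorem \ref{bdy} at scale $3\varepsilon/4$ with $\delta=\varepsilon/4$, and count maps $V^{n}\rightarrow D$. It is tidier than the paper's version, since the single vertex map $R^{D}$ guarantees that $c^{\prime}$ is a cycle (the paper chooses each $\sigma_{i}^{\prime}$ independently, which does not guarantee this), and you also correctly justify the generator bound for the subgroup $H_{n}^{\varepsilon}(X)$.

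One caution about Step 2: since $c^{\prime}=R^{D}_{\#}(r(c))$, justify it directly by the discrete prism operator $\sigma\mapsto\sum_{k}(-1)^{k}[\sigma(v_{0}),\ldots,\sigma(v_{k}),R^{D}\sigma(v_{k}),\ldots,R^{D}\sigma(v_{n})]$. This is a chain homotopy between $\operatorname{id}_{\#}$ and $R^{D}_{\#}$ whose simplices have diameter $<3\varepsilon/4$, and it is better than appealing to Theorem \ref{bdy} for arbitrary termwise pairings, which is not valid in that generality: a fine loop $\gamma$ around a circle and its reversal $\gamma^{\mathrm{rev}}$ are termwise close, yet $[\gamma^{\mathrm{rev}}]=-[\gamma]\neq[\gamma]$ at small scales.
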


\begin{proof}
Let $S=\{x_{1},\ldots,x_{N}\}$ be the centers of a cover of $X$ by $N$
$\frac{\varepsilon}{4}$-balls. By a simple counting argument, $S_{n}%
^{\frac{\varepsilon}{4}}(S)$ has at most $N^{n+1}$ elements, and therefore
$K_{n}^{\frac{\varepsilon}{4}}(S)$ has at most $N^{n+1}$ generators. We will
show that the image of the inclusion-induced map $i_{\ast}:K_{n}^{\varepsilon
}(S)\rightarrow K_{n}^{\varepsilon}(X)$ contains $H_{n}^{\varepsilon}(X)$,
finishing the proof. Suppose that $[c]_{\varepsilon}\in H_{n}^{\varepsilon
}(X)$. Since $\eta_{\varepsilon,\frac{\varepsilon}{4}}$ is surjective, we may
assume that $c$ is an $\frac{\varepsilon}{4}$-cycle; that is, $c=\sum
k_{i}\sigma_{i}$, where each $\sigma_{i}$ is an $\frac{\varepsilon}{4}%
$-simplex. Since $S$ is $\frac{\varepsilon}{4}$-dense, for each $\sigma_{i}$,
we may choose some simplex $\sigma_{i}^{\prime}\in S_{n}^{\frac{3\varepsilon
}{4}}(S)$ such that $\left\vert \sigma_{i}-\sigma_{i}^{\prime}\right\vert
<\frac{\varepsilon}{4}$. Letting $c^{\prime}=\sum k_{i}\sigma_{i}^{\prime}\in
K_{n}^{\varepsilon}(X)$, by Theorem \ref{bdy}, $i_{\ast}([c^{\prime
}]_{\varepsilon})=[c^{\prime}]_{\varepsilon}=[c]_{\varepsilon}$.
\end{proof}

Gromov's precompactness criterion says that a collection $\mathcal{X}$ of
compact metric spaces is precompact (aka totally bounded) if for every
$\varepsilon>0$ there is a uniform bound $N(\varepsilon)$ on the number of
$\varepsilon$-balls required to cover any space in $\mathcal{X}$. Therefore,
we obtain:

\begin{corollary}
\label{fgen}If $\mathcal{X}$ is a Gromov-Hausdorff precompact collection of
compact metric spaces, then for every $n,\varepsilon>0$ there exists some
$H(\varepsilon,n)$ such that for every $X\in\mathcal{X}$, $H_{n}^{\varepsilon
}(X)$ is generated by $H(\varepsilon,n)$ elements.
\end{corollary}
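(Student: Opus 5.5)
Corollary \ref{fgen} follows directly from Theorem \ref{dens} together with Gromov's precompactness criterion, so the plan is a short two-step argument.

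Fix $n$ and $\varepsilon>0$. Since $\mathcal{X}$ is Gromov-Hausdorff precompact, Gromov's criterion, applied at the radius $\frac{\varepsilon}{4}$, gives a uniform number $N=N(\frac{\varepsilon}{4})$ such that every $X\in\mathcal{X}$ can be covered by at most $N$ balls of radius $\frac{\varepsilon}{4}$. One point needs care: the criterion as usually stated refers to $\varepsilon$-nets or closed balls, while Theorem \ref{dens} uses open $\frac{\varepsilon}{4}$-balls. If necessary I would pass to a slightly smaller radius, taking $N(\frac{\varepsilon}{5})$. A cover by $\frac{\varepsilon}{5}$-balls (open or closed) is also a cover by open $\frac{\varepsilon}{4}$-balls with the same centers, so the count is unaffected.

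Now apply Theorem \ref{dens} to each $X\in\mathcal{X}$ with this $N$. It says that $H_{n}^{\varepsilon}(X)$ has a generating set with at most $N^{n+1}$ elements. A group generated by at most $N^{n+1}$ elements is also generated by exactly $N^{n+1}$ elements, obtained by padding with zeros. So I would set
\[
H(\varepsilon,n)=N\left(\tfrac{\varepsilon}{5}\right)^{n+1},
\]
which depends only on $\varepsilon$ and $n$ and not on $X$.

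There is no real obstacle here. The only steps needing attention are matching the radius convention in Gromov's criterion to the hypothesis of Theorem \ref{dens}, and noting that the bound in Theorem \ref{dens} depends on $X$ only through the covering number $N$.
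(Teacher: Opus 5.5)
Your proposal is correct and is the same argument as the paper's: the paper also obtains the corollary by taking the uniform covering bound from Gromov's precompactness criterion and applying Theorem \ref{dens}. Your extra care with the radius convention (passing to $\frac{\varepsilon}{5}$) is harmless. You also correctly use the direction ``precompact $\Rightarrow$ uniformly totally bounded'' of Gromov's criterion, which is the direction actually needed, even though the paper phrases the criterion only in the converse direction.
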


We now turn our attention to Latschev's Theorem and its various generalizations.

\begin{definition}
For any metric space $X$ and $n$, we say that $X$ has positive $n^{th}%
$-stability radius $\sigma_{n}=\sigma_{n}(X)>0$ if for all $0<\varepsilon
<\sigma_{n}$, $\rho_{\varepsilon}:H_{n}(X)\rightarrow K_{n}^{\varepsilon}(X)$
is an isomorphism.
\end{definition}

It is immediate that if $X$ has stability radius $\sigma_{n}>0$ then
$\sigma_{n}$ is a lower bound for the skeletal critical spectrum $\kappa
_{n}(X)$. The conclusion of Theorem \ref{HM} may now be restated as:
$\sigma_{n}(X)\geq2^{-(n+1)}\psi_{0}$.

\begin{theorem}
\label{qlatschev} Let $X$ and $Y$ be compact metric spaces such that $X$ and
$Y$ are $\delta$-homotopy equivalent and $X$ has stability radius $\sigma
_{n}>4\delta>0$. Then for all $0<\beta<\sigma_{n}-4\delta$,
\[
K_{n}^{\beta}(Y)\equiv K_{n}^{\beta}(X)\equiv H_{n}(X).
\]
Equivalently,
\[
H_{n}^{\Delta}\bigl(VR_{\beta}(Y)\bigr)\cong H_{n}(X)\text{.}%
\]
These assumptions are in particular satisfied if $d_{GH}(X,Y)<\delta$ and $X$
has uniform contractibility radius $\psi_{0}>0$, taking $\sigma_{n}%
=2^{-(n+1)}\psi_{0}$.
\end{theorem}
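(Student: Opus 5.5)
The plan is to run the diagram argument of Diagram (\ref{HI}), with the roles of $X$ and $Y$ swapped, and to use the stability radius to guarantee that all change-of-scale maps on the $X$ side are isomorphisms. Since $X$ has stability radius $\sigma_n$, for every $0<\varepsilon<\sigma_n$ the map $\rho_\varepsilon:H_n(X)\to K_n^\varepsilon(X)$ is an isomorphism. For $0<\alpha<\varepsilon<\sigma_n$ we have $\rho_\varepsilon=j_{\varepsilon\alpha}\circ\rho_\alpha$: this is the commutativity of Diagram (\ref{is}), since both sides are restrictions of the same small continuous cycle. Hence every $j_{\varepsilon\alpha}$ with $\varepsilon<\sigma_n$ is an isomorphism, so $(0,\sigma_n)$ is a skeletal non-critical interval for $X$, and $K_n^\beta(X)\cong H_n(X)$ for all $\beta<\sigma_n$.

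Fix $0<\beta<\sigma_n-4\delta$. First I would handle $\beta>2\delta$. Let $f:Y\to X$ and $g:X\to Y$ be the $\delta$-homotopy inverses. By Theorem \ref{induced}, with the computation in the proof of Theorem \ref{interleaf}, we get
\[
f_\ast\circ g_\ast=j_{\beta+2\delta,\beta-2\delta}:K_n^{\beta-2\delta}(X)\to K_n^{\beta+2\delta}(X),
\]
and
\[
g_\ast\circ f_\ast=j_{\beta+2\delta,\beta-2\delta}:K_n^{\beta-2\delta}(Y)\to K_n^{\beta+2\delta}(Y)
\]
(at neighbouring scales, after shifting indices). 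Consider the chain
\[
K_n^{\beta-4\delta}(X)\xrightarrow{g_\ast}K_n^{\beta-2\delta}(Y)\xrightarrow{j}K_n^{\beta}(Y)\xrightarrow{f_\ast}K_n^{\beta+2\delta}(X).
\]
The composite of the last two maps is $f_\ast$ on $K_n^{\beta-2\delta}(Y)$, and $f_\ast g_\ast$ is a change-of-scale map on $X$ inside $(0,\sigma_n)$, since $\beta+2\delta<\sigma_n$. So $j\circ g_\ast$ is injective and $f_\ast$ is surjective. Running the same argument one step further, with $K_n^{\beta}(Y)\to K_n^{\beta+2\delta}(X)\to K_n^{\beta+4\delta}(Y)$, together with the $X$-side chain ending at $\beta+4\delta<\sigma_n$, shows that $f_\ast:K_n^\beta(Y)\to K_n^{\beta+2\delta}(X)$ is injective. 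For this I need $f_\ast(y)=0\Rightarrow j_{\beta+4\delta,\beta}(y)=0$, combined with knowing that $y$ already comes from $X$. This is exactly the diagram chase after (\ref{HI}), i.e.\ the proof of Proposition \ref{nci}, adapted so that the non-critical side is $X$ rather than $Y$. The bound $\beta+4\delta<\sigma_n$ is what makes all the relevant $X$-scales lie in $(0,\sigma_n)$.

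The main obstacle is this step: bookkeeping the scales so that injectivity and surjectivity of $K_n^\beta(Y)\to K_n^{\beta+2\delta}(X)\cong H_n(X)$ both follow using only isomorphisms on the $X$ side, with no non-criticality assumed for $Y$. I would first try the two-sided sandwich above, using $K_n^{\beta-2\delta}(Y)\to K_n^\beta(Y)\to K_n^{\beta+2\delta}(Y)$ together with the factorizations through $X$. If surjectivity of $j$ onto $K_n^\beta(Y)$ is not available, I would fall back on Theorem \ref{cycle}-style arguments. For small $\beta\le 2\delta$ the modulus $\max\{\beta-2\delta,0\}$ degenerates. Here I would exploit that $\omega(\varepsilon)=\max\{\varepsilon-2\delta,0\}$ is only a sufficient modulus: in the Hausdorff case of the final sentence one can choose $f,g$ so that the relevant scales work, or restrict attention to $\beta$ in the range where the shifted scales are positive. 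This boundary case needs care.

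The VR statement then follows immediately from Theorem \ref{VR}. For the final sentence, Theorem \ref{HM} gives $\sigma_n\ge 2^{-(n+1)}\psi_0$. Embedding $X$ and $Y$ with Hausdorff distance $<\delta$, Lemma \ref{ill} produces a $\delta$-homotopy equivalence, so the hypotheses of the theorem are satisfied.
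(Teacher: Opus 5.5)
Your surjectivity half is fine, but the injectivity step is a genuine gap. It is the same gap that sits in the paper's own two-line proof, which applies Proposition \ref{nci} with $X$ and $Y$ exchanged and then Diagram (\ref{HI}). Keep your notation $f:Y\to X$, $g:X\to Y$, and take $2\delta<\beta<\sigma_n-4\delta$. Theorem \ref{induced}, applied to $f\circ g$ versus $\mathrm{id}_X$, gives $j_{\beta+4\delta,\beta+2\delta}\circ f_\ast\circ g_\ast=j_{\beta+4\delta,\beta-2\delta}$. Since all these $X$-scales lie in $(0,\sigma_n)$, the map $f_\ast\circ g_\ast:K_n^{\beta-2\delta}(X)\to K_n^{\beta+2\delta}(X)$ is an isomorphism; routing through $K_n^{\beta-4\delta}(X)$ is unnecessary and forces $\beta>4\delta$. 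But this yields only
$K_n^\beta(Y)=\operatorname{im}g_\ast\oplus\ker f_\ast\cong H_n(X)\oplus\ker f_\ast$.
For $y\in\ker f_\ast$, Theorem \ref{induced} applied to $g\circ f$ versus $\mathrm{id}_Y$ gives only $j_{\beta+6\delta,\beta}(y)=0$. That is useless unless the change-of-scale maps of $Y$ are injective. Your phrase ``$y$ already comes from $X$'' means $y\in\operatorname{im}g_\ast$, i.e.\ surjectivity of $g_\ast$. Both are non-criticality properties of $Y$, which is exactly what is to be proved. Every composite out of $K_n^\beta(Y)$ routed through $X$ factors through $f_\ast$ and so cannot see $\ker f_\ast$. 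Theorem \ref{cycle} does not help either, since its criteria are again stated in terms of change-of-scale maps. In the paper, likewise, the identity $f_\ast\circ g_\ast=j$ makes one map injective and the other surjective; it does not make $h$ an isomorphism.

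In fact $\ker f_\ast$ can be nonzero, so no argument closes the gap: the statement is false as written.

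\emph{Small $\beta$.} Let $X=C$ be the circle of circumference $1$, so $\psi_0=1/2$ and $\sigma_1=1/8$. Let $Y\subset C$ be $m>40$ equally spaced points and $\delta=1/m$. Then $d_H(X,Y)<\delta$, but for $\beta<1/m$ the complex $VR_\beta(Y)$ is discrete, so $K_1^\beta(Y)=0\neq H_1(C)$. This contradicts even the last sentence of the statement, so your ``boundary case'' is not a technicality.

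\emph{Middle range.} Let $X=P_0$ and $Y=P$ be $C$ with concentric open arcs of lengths $1/3$ and $1/3-\eta$ removed, as in Example \ref{pac}, with $\delta=\eta<1/9$. Then $d_H(X,Y)\le\eta/2<\delta$. We have $K_1^\varepsilon(P_0)=0=H_1(P_0)$ for every $\varepsilon$: for $\varepsilon\le 1/3$ no simplex crosses the gap, and for $\varepsilon>1/3$ the loop is filled by the $2$-simplex on the two endpoints and the midpoint of $P_0$, which are pairwise $1/3$ apart. Hence $\sigma_1=1$ is a stability radius. Yet $K_1^\beta(P)\neq 0$ for $\beta\in(1/3-\eta,1/3)\subset(2\delta,\sigma_1-4\delta)$. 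Here $\ker f_\ast$ is exactly this short-lived phantom class, and the same example also contradicts Proposition \ref{nci}.

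What the diagram chase honestly gives is the direct-summand statement above. Equivalently in spirit, for $2\delta<\beta<\sigma_n-8\delta$, the image of $j_{\beta+6\delta,\beta}:K_n^\beta(Y)\to K_n^{\beta+6\delta}(Y)$ is isomorphic to $H_n(X)$. A termwise isomorphism $K_n^\beta(Y)\cong H_n(X)$ would need at least a lower bound on $\beta$ in terms of $\delta$, plus geometric input about $Y$ beyond the interleaving identities.
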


\begin{proof}
Under these assumptions, Proposition \ref{nci} implies that $(0,\sigma
_{n}-4\delta)$ is homotopy non-critical, and the homomorphism $h$ in Diagram
(\ref{HI}) is an isomorphism. The Vietoris--Rips statement follows from
Theorem~\ref{VR}, and the final statement follows from the comment prior to
the statement of the theorem.
\end{proof}

In Proposition 5.11, \cite{CDO}, Chazal-deSilva-Oudot give an example of a
totally bounded metric space $X$ such that the first simplicial homology of
the VR-complex of $X$ is countably generated, showing that Theorem \ref{dens}
fails for skeletal homology. As a totally disconnected space, their example
has trivial real homology and hence only phantom homology.

Clearly, it is important to know when the skeletal or homology critical
spectrum is discrete (in $(0,\infty)$). We say that $\kappa_{n}(X)$ is
discrete from below (resp. from above) if for every $\psi\in\kappa_{n}(X)$
there is some $\varepsilon>0$ such that $(\varepsilon,\psi)$ (resp.
$(\psi,\varepsilon)$) is non-critical with similar definitions for $\eta
_{n}(X)$. It is possible for a compact metric space to have a skeletal
critical spectrum that is neither discrete from above nor from below (Example
\ref{jd}). We can squeeze out some purely algebraic consequences from the
ascending chain condition, when the groups in question are finitely generated,
hence Noetherian.

\begin{proposition}
Suppose $\{\varepsilon_{i}\}$ is strictly increasing with $0<\beta
\leq\varepsilon_{i}\leq\psi$ for all $i$.
\end{proposition}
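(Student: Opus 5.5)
Since the statement breaks off after its hypotheses, I will prove the conclusion I expect it to have. For compact $X$ (or, more generally, whenever $H_{n}^{\beta}(X)$ is finitely generated), the maps $\eta_{\varepsilon_{i+1}\varepsilon_{i}}:H_{n}^{\varepsilon_{i}}(X)\rightarrow H_{n}^{\varepsilon_{i+1}}(X)$ should be isomorphisms for all sufficiently large $i$. Equivalently, the groups $H_{n}^{\varepsilon_{i}}(X)$ eventually stabilize along the sequence. If the intended conclusion differs, the same Noetherian mechanism should still be the engine of the proof.

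\emph{Setting up the ascending chain.} The only input needed is that $H_{n}^{\beta}(X)$ is finitely generated. For compact $X$ this is Theorem \ref{dens}. A finitely generated abelian group is Noetherian, so every ascending chain of subgroups stabilizes. For each $i$ let
\[
A_{i}=\ker\bigl(\eta_{\varepsilon_{i}\beta}:H_{n}^{\beta}(X)\rightarrow H_{n}^{\varepsilon_{i}}(X)\bigr)\subset H_{n}^{\beta}(X).
\]
When $\varepsilon_{i}=\beta$ the map is the identity. Since the $\eta$'s form an inverse system (Theorem \ref{cpd}), we have $\eta_{\varepsilon_{i+1}\beta}=\eta_{\varepsilon_{i+1}\varepsilon_{i}}\circ\eta_{\varepsilon_{i}\beta}$. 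Hence $A_{i}\subset A_{i+1}$, and this ascending chain stabilizes at some index $i_{0}$.

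\emph{Deducing the isomorphism.} Fix $i\geq i_{0}$. Surjectivity of $\eta_{\varepsilon_{i+1}\varepsilon_{i}}$ is already part of Theorem \ref{cpd}. For injectivity, take $x\in H_{n}^{\varepsilon_{i}}(X)$ with $\eta_{\varepsilon_{i+1}\varepsilon_{i}}(x)=0$. By surjectivity of $\eta_{\varepsilon_{i}\beta}$ (Theorem \ref{cpd} again), write $x=\eta_{\varepsilon_{i}\beta}(y)$. Then $y\in A_{i+1}=A_{i}$, so $x=\eta_{\varepsilon_{i}\beta}(y)=0$. The same argument applies to $\eta_{\varepsilon_{j}\varepsilon_{i}}$ for any $j>i\geq i_{0}$.

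\emph{Main obstacle.} The delicate point is the asymmetry between increasing and decreasing sequences. This argument uses the ascending chain condition on kernels inside a single fixed finitely generated group $H_{n}^{\beta}(X)$, together with the surjectivity of the $\eta$'s. That is exactly why the statement requires the sequence to increase and to be bounded below by $\beta$. The upper bound $\psi$ plays no role in this step; I expect it is used only to phrase the consequence, namely that an increasing sequence of homology critical values in $[\beta,\psi]$ must eventually have non-critical steps. For skeletal homology, or for decreasing sequences, no such argument is available, because $K_{n}^{\beta}(X)$ need not be finitely generated (cf. the example of Chazal--de Silva--Oudot cited above). I would therefore state the result only for $H_{n}^{\varepsilon}$, and check carefully that every map used is one of the $\eta$'s rather than a $j$.
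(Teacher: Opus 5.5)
Your argument is correct for the claim you chose. The ascending chain $A_i=\ker\eta_{\varepsilon_i\beta}$ in $H_n^{\beta}(X)$, combined with the automatic surjectivity of the $\eta$'s, shows that $\eta_{\varepsilon_j\varepsilon_i}$ is an isomorphism for large $i<j$. This is the ``similar algebraic argument'' the paper invokes right afterwards for Proposition \ref{ch}.

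The proposition itself is different: its conclusions are the enumerated list immediately after the proposition environment, and they concern skeletal homology and the maps $j$. Part (1) says that if $K_n^{\beta}(X)$ is finitely generated and $j_{\varepsilon_i\beta}$ is surjective for large $i$, then $j_{\varepsilon_j\varepsilon_i}$ is injective for large $i<j$. Part (2) says that if $K_n^{\psi}(X)$ is finitely generated and $j_{\psi\varepsilon_i}$ is injective for large $i$, then $j_{\varepsilon_j\varepsilon_i}$ is surjective for large $i<j$.

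Your statement that ``no such argument is available'' for skeletal homology is therefore mistaken. The paper does not need $K_n^{\beta}(X)$ to be finitely generated in general; it assumes it. It also replaces the free surjectivity of $\eta$ by the hypothesis that $j_{\varepsilon_i\beta}$ is surjective. With those substitutions, your kernel-chain argument is word for word the paper's proof of (1).

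What is genuinely missing is part (2), and with it the role of $\psi$, which you say plays no role. The missing idea is the dual chain of \emph{images} in the top group. Set $I_i=\operatorname{im} j_{\psi\varepsilon_i}\subset K_n^{\psi}(X)$.

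Since $j_{\psi\varepsilon_i}=j_{\psi\varepsilon_{i+1}}\circ j_{\varepsilon_{i+1}\varepsilon_i}$, we have $I_i\subset I_{i+1}$. This chain therefore stabilizes at some $s$, because $K_n^{\psi}(X)$ is finitely generated.

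Now take $k>s$ and $x\in K_n^{\varepsilon_k}(X)$. Then $j_{\psi\varepsilon_k}(x)\in I_k=I_s$, so $j_{\psi\varepsilon_k}(x)=j_{\psi\varepsilon_s}(y)=j_{\psi\varepsilon_k}\bigl(j_{\varepsilon_k\varepsilon_s}(y)\bigr)$ for some $y$. Injectivity of $j_{\psi\varepsilon_k}$ then forces $x=j_{\varepsilon_k\varepsilon_s}(y)$.

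This uses the ascending chain condition in $K_n^{\psi}(X)$ rather than in $K_n^{\beta}(X)$. It turns injectivity into the endpoint into surjectivity of the intermediate maps, and nothing in your proposal produces it.
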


\begin{enumerate}
\item If\ $K_{n}^{\beta}(X)$ is finitely generated and for all large $i$,
$j_{\varepsilon_{i}\beta}$ is surjective, then for all large $i<j$,
$j_{\varepsilon_{j}\varepsilon_{i}}$ is injective

\item If $K_{n}^{\psi}(X)$ is finitely generated and for all large $i$,
$j_{\psi\varepsilon_{i}}$ is injective, then for all large $i<j$,
$j_{\varepsilon_{j}\varepsilon_{i}}$ is surjective.
\end{enumerate}

\begin{proof}
We have the following sequence, where $j>i$%
\[
K_{n}^{\beta}(X)\rightarrow\cdots\rightarrow K_{n}^{\varepsilon_{i}%
}(X)\rightarrow\cdots\rightarrow K_{n}^{\varepsilon_{j}}(X)\rightarrow
\cdots\rightarrow K_{n}^{\psi}(X)
\]
By the inverse limit condition, if $K_{i}$ denotes $\ker j_{\varepsilon
_{i}\beta}$, $K_{i}\subset K_{i+1}$; by definition, $\{K_{i}\}$ is an
ascending chain. Since $K_{n}^{\beta}(X)$ is finitely generated, by the
ascending chain condition, this sequence stabilizes. That is, there is some
$s$ such that for all $k>s$, $K_{s}=K_{k}$. Suppose that $[c]_{\varepsilon
_{s}}\in\ker j_{\varepsilon_{k}\varepsilon_{s}}$. If $j_{\varepsilon_{s}\beta
}$ is surjective, there is some $[c^{\prime}]_{\beta}\in K_{n}^{\beta}(X)$
such that $j_{\varepsilon_{s}\beta}([c^{\prime}]_{\beta})=[c]_{\varepsilon
_{s}}$, and $[c^{\prime}]_{\beta}\in K_{k}=K_{s}$. Therefore $[c]_{\varepsilon
_{s}}=0$, showing that $j_{\varepsilon_{k}\varepsilon_{s}}$ is injective and
proving the first statement.

For the second part, let $I_{i}$ be the image of $j_{\psi\varepsilon_{i}}$.
Again by the inverse limit condition, $\{I_{i}\}$ is an ascending chain in
$K_{n}^{\psi}(X)$, stabilizing at some $I_{s}$. Suppose that $k>s$ and let
$[c]_{k}\in K_{n}^{\varepsilon_{k}}(X)$. Then $j_{\psi\varepsilon_{k}%
}([c]_{\varepsilon_{k}})=[c]_{\psi}\in I_{k}=I_{s}$. Therefore, there is some
$[c^{\prime}]_{\varepsilon_{s}}\in K_{n}^{\varepsilon_{s}}(X)$ such that
$j_{\psi\varepsilon_{s}}([c^{\prime}]_{\varepsilon_{s}})=[c]_{\psi}$. But then
$[c]_{\varepsilon_{k}}-[c^{\prime}]_{\varepsilon_{k}}\in\ker j_{\psi
\varepsilon_{k}}$. If $\ker j_{\psi\varepsilon_{k}}=0$ then $[c]_{\varepsilon
_{k}}=[c^{\prime}]_{\varepsilon_{k}}$, showing that $j_{\varepsilon
_{k}\varepsilon_{s}}$ is surjective.
\end{proof}

Using similar algebraic arguments, since $\eta_{\varepsilon_{j}\varepsilon
_{i}}$ is always surjective, we have the following:

\begin{proposition}
\label{ch}If $H_{n}(X)$ is finitely generated, then the homology critical
spectrum of $X$ is discrete from below, and the number of non-discrete
homology critical values is bounded above by the rank of $H_{n}(X)$.
\end{proposition}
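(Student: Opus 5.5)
The plan is to view all the real homology groups $H_n^{\varepsilon}(X)$ as quotients of the single finitely generated group $H_n(X)$, and to use the ascending chain condition on the kernels of the quotient maps.

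\emph{Setup.} Since $H_{n}^{\varepsilon}(X)=\rho_{\varepsilon}(H_{n}(X))$, we have $H_n^{\varepsilon}(X)\cong H_n(X)/\ker\rho_\varepsilon$. Write $L_\varepsilon=\ker\rho_\varepsilon$. By naturality of $\rho$ with respect to change of scale, $\rho_{\varepsilon}=j_{\varepsilon\delta}\circ\rho_{\delta}=\eta_{\varepsilon\delta}\circ\rho_\delta$ for $\delta<\varepsilon$. This follows from the definition $r^{\ast}\circ\beta^{-1}$, using that the inclusion $C_S^{\delta}\subset C_S^{\varepsilon}$ commutes with $r$ and with $\beta$. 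Consequently $L_\delta\subset L_\varepsilon$ whenever $\delta<\varepsilon$. Moreover $\eta_{\varepsilon\delta}$ is an isomorphism if and only if $L_\delta=L_\varepsilon$, since $\eta_{\varepsilon\delta}$ is the induced surjection $H_n(X)/L_\delta\to H_n(X)/L_\varepsilon$. So the homology critical spectrum consists exactly of the points where the increasing family of subgroups $L_\varepsilon\subset H_n(X)$ fails to be locally constant.

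\emph{Discreteness from below.} Suppose $\psi$ is a homology critical value and no interval $(\varepsilon,\psi)$ is non-critical. Then there are critical values, and hence jumps of $L$, accumulating at $\psi$ from below. Choose a strictly increasing sequence $\varepsilon_i\to\psi$ with $L_{\varepsilon_i}\subsetneq L_{\varepsilon_{i+1}}$. This gives an infinite strictly ascending chain in the finitely generated, hence Noetherian, group $H_n(X)$, which is a contradiction.

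The one technical point here is that a critical point $\psi'$ in $(\varepsilon_i,\psi)$ produces a strict jump somewhere nearby. Since $L$ is monotone, if $L$ were constant on a neighbourhood of $\psi'$ then $\eta_{\psi'+\delta,\psi'-\delta}$ would be an isomorphism for small $\delta$. So $L_a\ne L_b$ for some $a<\psi'<b$ inside $(\varepsilon_i,\psi)$, and we can pick these points inductively.

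\emph{Counting.} For the bound on the number of non-discrete critical values, each such value (an accumulation point from above, given discreteness from below) will be assigned a strict jump of $L$. The plan is to pass to ranks: the function $\varepsilon\mapsto\operatorname{rank}L_\varepsilon$ is non-decreasing with values in $\{0,\dots,\operatorname{rank}H_n(X)\}$, so it has at most $\operatorname{rank}H_n(X)$ jumps.

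The main obstacle is jumps of $L$ that are purely torsion, i.e. $L_\delta\subsetneq L_\varepsilon$ with equal rank. There the index $[L_\varepsilon:L_\delta]$ is finite, but it could in principle be involved in infinitely many accumulating critical values. The first thing I would try is to show that at a non-discrete critical value $\psi$, approached from above by critical values $\psi_k\downarrow\psi$, the union or intersection behaviour forces a rank increase. For instance, I would look at $\bigcap_{\varepsilon>\psi}L_\varepsilon$, which is itself a subgroup of $H_n(X)$. Infinitely many strict containments above $\psi$ give an infinite strictly descending chain of subgroups of $H_n(X)$. The intended conclusion is that, modulo torsion, the rank must drop along this chain, since a finitely generated abelian group admits only finitely many strict descents within a fixed finite-index range. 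I would then argue that each such $\psi$ consumes one unit of rank.

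If the torsion issue cannot be handled this way, I would restrict the count to rank changes, interpreting ``rank'' as including torsion length, and use composition-length arguments on $H_n(X)/L_\psi$. This is where I expect most of the work to lie.
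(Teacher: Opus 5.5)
\textbf{Discreteness from below.} This part is correct and is the paper's argument made explicit. The paper gives only a one-line indication (``similar algebraic arguments'', i.e. the ascending chain condition, plus surjectivity of $\eta_{\varepsilon\delta}$). You identify $\eta_{\varepsilon\delta}$ with the surjection $H_n(X)/L_\delta\to H_n(X)/L_\varepsilon$, where $L_\varepsilon=\ker\rho_\varepsilon$ increases with $\varepsilon$. That identification is exactly what makes the indication rigorous.

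\textbf{Counting.} This part stops short of a proof, and the step you plan to use is misstated. The rank need not drop \emph{along} an infinite strictly descending chain: $2^{k}\mathbb{Z}\subset\mathbb{Z}$ is strictly descending with every term of rank $1$. The correct statement concerns the limit, and goes as follows.
\begin{itemize}
\item[(i)] Let $\psi$ be a non-discrete critical value, so critical values accumulate at $\psi$ from above. As in your from-below argument, choose jumps $L_{a_k}\subsetneq L_{b_k}$ with $b_{k+1}<a_k$ and $b_k\downarrow\psi$. This gives a strictly descending chain $L_{b_1}\supsetneq L_{b_2}\supsetneq\cdots$.
\item[(ii)] Let $r$ be the eventual value of $\operatorname{rank}L_\varepsilon$ as $\varepsilon\downarrow\psi$, and set $L^{+}=\bigcap_{\varepsilon>\psi}L_\varepsilon$. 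Suppose $\operatorname{rank}L^{+}=r$. Then for $\varepsilon>\psi$ close to $\psi$, the group $L_\varepsilon/L^{+}$ is finitely generated and torsion, hence finite. A finite group cannot contain the infinite strictly descending chain of the $L_{b_k}/L^{+}$ with $b_k<\varepsilon$. Hence $\operatorname{rank}L^{+}<r$.
\item[(iii)] Since $L_\psi\subset L^{+}$, you get $\operatorname{rank}L_\psi<\operatorname{rank}L_\varepsilon$ for every $\varepsilon>\psi$. So the non-decreasing function $\varepsilon\mapsto\operatorname{rank}L_\varepsilon$, with values in $\{0,\dots,R\}$ where $R=\operatorname{rank}H_n(X)$, jumps at $\psi$ itself.
\item[(iv)] Take non-discrete critical values $\psi_1<\dots<\psi_m$ and points $\varepsilon_i\in(\psi_i,\psi_{i+1})$, with $\varepsilon_m>\psi_m$. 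Then $\operatorname{rank}L_{\psi_1}<\operatorname{rank}L_{\varepsilon_1}\le\operatorname{rank}L_{\psi_2}<\dots<\operatorname{rank}L_{\varepsilon_m}\le R$, so $m\le R$.
\end{itemize}

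In particular, purely torsion jumps can accumulate only at rank jumps, so the obstacle you were worried about disappears. Your instinct to look at $\bigcap_{\varepsilon>\psi}L_\varepsilon$ and use finiteness of finite-index ranges was the right mechanism. It just has to be stated as a rank drop of the intersection and then converted into a jump of the rank function at $\psi$.

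You should discard the fallback of counting torsion or composition length. $\mathbb{Z}$ has infinite composition length, so that count gives no finite bound, and it would in any case bound a different quantity than the one in the statement.
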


We now have a nice picture of how the groups $H_{n}^{\varepsilon}(X)$
\textquotedblleft stratify\textquotedblright\ the singular homology of a
metric space $X$ in a way that is much more orderly than skeletal homology or
equivalently the Vietoris-Rips persistence module. We have the commutative diagram%

\begin{equation}
\begin{tikzcd}[column sep=small, row sep=small] & {} \arrow[d] \\ & H_{n}^{\delta}(X) \arrow[d] \\ H_{n}(X) \arrow[ur] \arrow[r] \arrow[dr] & \vdots \arrow[d] \\ & H_{n}^{\varepsilon}(X) \arrow[d] \\ & {} \end{tikzcd} \label{rhdia}%
\end{equation}
in which all of the maps are surjective. The isomorphism type of
$H_{n}^{\varepsilon}(X)$ may only change at the homology critical values,
which are discrete from below if $H_{n}(X)$ is finitely generated. If $X$ has
stability radius $\sigma_{n}>0$ then the inverse system stabilizes at
$H_{n}(X)$ below $\sigma_{n}$. That is, as the scale goes from large to small
(upwards in the diagram), $H_{n}^{\varepsilon}(X)$ group and \textquotedblleft
sees more topology\textquotedblright\ until eventually it sees all of it.

\begin{remark}
Returning to the colorful analogy of \textquotedblleft birth\textquotedblright%
\ and \textquotedblleft death\textquotedblright\ of \textquotedblleft
features\textquotedblright\ as the parameter goes from small to large, we may
say the following for a compact, uniformly locally contractible space: Real
homology is \textquotedblleft born\textquotedblright\ for small enough
$\varepsilon$, \textquotedblleft dies a little\textquotedblright\ at each
homology critical value, and is \textquotedblleft fully dead\textquotedblright%
\ by the time the parameter exceeds the diameter of the space. On the other
hand, phantom homology starts out dead, then comes to life, then dies again.
\end{remark}

\begin{remark}
When it comes to Riemannian manifolds with non-negative curvature, with
diameter $\leq D$ and dimension $\leq n$, the uniform locally contractibility
requirement is much stronger than is necessary to uniformly bound the
generators of homology, as is seen by Gromov's Betti Numbers Theorem
(\cite{GB}) and its extension by Weiss in \cite{W}.
\end{remark}

Along these lines, it is interesting to note that, as is well-known from the
study of collapse of Riemannian manifolds, singular homology generally is
\textit{not stable }with respect to Gromov-Hausdorff convergence. Therefore,
through its stability, skeletal homology is able to \textit{anticipate} the
abrupt changes in homology that can occur in Gromov-Hausdorff limits. We will
look more closely at this phenomenon in \cite{SH2}, but for now we only
present the following example.

\begin{example}
\label{berg}The Berger spheres (\cite{Ber}) $B_{\delta}$ consist of Riemannian
metrics on $S^{3}$ that begin with the Fubini-Study metric and shrink the
fibers of the Hopf fibration $h:S^{3}\rightarrow S$, which are rescaled great
circles that remain orthogonal to the base space) $S^{2}$. We will not provide
any more geometric details about this important example because for our
purposes we only need the fact that the topological $3$-spheres $B_{\delta}$
converge in the Gromov-Hausdorff metric on the base $S$, which is a
topological $2$-sphere. Up to reparameterization, we have $\delta$-homotopy
equivalences $f_{\delta}:B_{\delta}\rightarrow S$ and $g_{\delta}:S\rightarrow
B_{\delta}$ for all $\delta>0$. By Theorem \ref{interleaf}, $(f_{\delta}\circ
g_{\delta})_{\ast}=j_{\varepsilon+2\delta,\varepsilon-2\delta}$ factors as
$K_{n}^{\varepsilon-2\delta}(S)\rightarrow K_{n}^{\varepsilon}(B_{\varepsilon
})\rightarrow K_{n}^{\varepsilon+2\delta}(S)$. By Theorem \ref{HM}, the
homology critical values stabilize at some $\varepsilon_{0}>0$. That is
$\eta_{\varepsilon+2\delta,\varepsilon-2\delta}$ is an isomorphism and
$H_{2}^{\varepsilon}(S)=\mathbb{Z}=H_{2}(S)$ for all sufficiently small
$\varepsilon,\delta>0$. But then $K_{2}^{\varepsilon}(B_{\varepsilon})$
contains $\mathbb{Z}$ for all sufficiently small $\varepsilon>0$. Put another
way, $K_{2}^{\varepsilon}(B_{\varepsilon})$ \textquotedblleft
anticipates\textquotedblright\ the impending collapse.

As for $n=3$, $H_{3}(B_{\varepsilon})=H_3(S^{3})= \mathbb{Z}$ is generated by the difference
$g_{\varepsilon}$ between the upper and lower hemispheres, considered as
properly ordered $3$-simplices. As $\varepsilon\rightarrow0$ the upper and
lower hemispheres approach one another in the Gromov-Hausdorff metric. That
is, for every $\kappa>0$, there is some $\varepsilon_{\kappa}>0$ such that for
all $0<\varepsilon<\varepsilon_{\kappa}$, $\left\vert \rho_{\varepsilon
}(g_{\varepsilon})\right\vert <\kappa$. By Theorem \ref{bdy}, $H_{3}%
^{\varepsilon+\kappa}(B_{\varepsilon})=0$. In other words, although
$H_{3}^{\delta}(B_{\varepsilon})=\mathbb{Z}$ for all small enough $\delta$,
the upper bound of those values $\delta$ tends to $0$ with $\varepsilon$. This
time, $H_{3}^{\varepsilon}(B_{\varepsilon})$ anticipates the drop in dimension.
\end{example}

\textbf{Questions:}

\begin{enumerate}
\item What are the homology and skeletal critical spectra of spheres of
constant curvature $1$? From Example \ref{s1} we have $\kappa_{1}%
(S^{1})=\{\frac{1}{3}\}$, and (in the language of this paper) this question
was fully answered for $n=1$ by Adamaszek-Adams in \cite{AA}. For $n>1$ it is
easy to check by induction that $\kappa_{n}(S^{n})$ is bounded above by
$\frac{\pi}{2}$. The proof for $n=1$ follows from the existence of special
closed geodesics called \textit{essential circles} whose lengths determine the
homotopy critical spectrum, which is the same as $\kappa_{1}(S^{1})$. But we
know of an analog of essential circles for $n>1$. A subsequent step might be
to more precisely understand the homology critical spectrum of the Berger
spheres in Example \ref{berg}.

\item Do there exist compact, connected Riemannian manifolds, or even compact,
connected geodesic spaces, with $\kappa_{n}$ or $\eta_{n}$ non-discrete?
($\kappa_{1}=\eta_{1}$ is known to be discrete for compact geodesic spaces
(\cite{PW1}).)

\item Is it possible to characterize Riemannian $n$-manifolds that have no
phantom homology in dimensions $\leq n$?
\end{enumerate}

\section{Skeletal Homology}

In order to relate skeletal homology to classical singular homology, we need
to revisit one of the most important basic theorems in singular homology: that
homology is unchanged by simply restricting to \textquotedblleft small
simplices\textquotedblright, traditionally defined as simplices subordinate to
an open cover (Proposition 2.21 in \cite{H}). For any $\varepsilon>0$, let
$C_{S,n}^{\varepsilon}(X)$ denote the free abelian group generated by
\textquotedblleft$\varepsilon$-small\textquotedblright\ continuous simplices,
that is, whose images have a diameter less than $\varepsilon$. We denote the
resulting homology groups by $H_{S,n}^{\varepsilon}(X)$ to distinguish them
from $H_{n}^{\varepsilon}(X)$.

\begin{theorem}
\label{rh}Let $X$ be a metric space and $0<\delta<\varepsilon$. Then the
inclusion maps of $C_{S,n}^{\delta}(X)$ into $C_{n}(X)$, resp. $C_{S,n}%
^{\varepsilon}(X)$, induce isomorphisms $\beta_{\delta}$, resp. $\beta
_{\varepsilon\delta}$, such that the following diagram commutes:
\begin{equation}
\begin{tikzcd}[column sep=2.0em, row sep=2.0em] H_{S,n}^{\delta}(X) \arrow[r, "\beta_{\delta}"] \arrow[d, "\beta_{\varepsilon\delta}"'] & H_{n}(X) \\ H_{S,n}^{\varepsilon}(X) \arrow[ur, "\beta_{\varepsilon}"] & \end{tikzcd} \label{isos}%
\end{equation}

\end{theorem}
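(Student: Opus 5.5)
The plan is to reduce Theorem \ref{rh} to the classical small-simplices theorem, Proposition 2.21 in \cite{H}. For each $\varepsilon>0$, consider the open cover $\mathcal{U}_{\varepsilon}$ of $X$ by open metric balls $B(x,\varepsilon/3)$ for $x\in X$. Since any set of diameter less than $\varepsilon$ containing a point $x$ lies in $B(x,\varepsilon)$, a cover by balls does not by itself match the diameter condition exactly. So instead of invoking Proposition 2.21 as a black box, I would rerun its proof with the diameter condition built in.

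The classical proof provides, for each singular chain $c$, a subdivision operator $S$ and a chain homotopy $D$ with $\partial D+D\partial=\mathbb{1}-S$ on $C_{n}(X)$. Both are natural and preserve the subcomplex of chains whose simplices have images inside any given set. The key metric facts are the following.

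\begin{enumerate}
\item For a continuous simplex $\sigma:\Delta^{n}\rightarrow X$, the set $\sigma(\Delta^{n})$ is compact, hence uniformly continuous on $\Delta^{n}$. Since the diameters of the simplices of the $m$-fold barycentric subdivision of $\Delta^{n}$ are at most $(n/(n+1))^{m}\operatorname{diam}\Delta^{n}$, each simplex of $S^{m}\sigma$ has image of diameter less than $\delta$ for $m$ large. This is the Lebesgue-number step, replaced here by uniform continuity.
\item Every simplex appearing in $S\sigma$ or $D\sigma$ has image contained in $\sigma(\Delta^{n})$. Hence $S$ and $D$ map $C_{S,n}^{\delta}(X)$ into itself and, likewise, $C_{S,n}^{\varepsilon}(X)$ into itself.
\end{enumerate}

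With these facts, the algebra of Hatcher's proof goes through verbatim with $C_{n}^{\mathcal{U}}(X)$ replaced by $C_{S,n}^{\delta}(X)$. For each $\sigma$ choose $m(\sigma)$ minimal with $S^{m(\sigma)}\sigma\in C_{S,n}^{\delta}(X)$. Define $D_{\delta}\sigma=\sum_{0\leq i<m(\sigma)}DS^{i}\sigma$ and $\rho(\sigma)=\sigma-\partial D_{\delta}\sigma-D_{\delta}\partial\sigma$. One checks, exactly as in \cite{H}, that:
\begin{itemize}
\item $\rho$ is a chain map $C_{n}(X)\rightarrow C_{S,n}^{\delta}(X)$;
\item $\rho\circ\iota=\mathbb{1}$ on $C_{S,n}^{\delta}(X)$, since $m=0$ there;
\item $\iota\rho$ is chain homotopic to $\mathbb{1}$ via $D_{\delta}$.
\end{itemize}
The only place needing care is verifying that $\rho(\sigma)$ lands in $C_{S,n}^{\delta}(X)$. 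This uses fact 2 applied to the faces of $\sigma$, for which $m(\partial_{j}\sigma)\leq m(\sigma)$; that inequality holds because faces of subdivided simplices are subsets of subdivided simplices. It follows that $\beta_{\delta}$ is an isomorphism for every $\delta>0$, and in particular $\beta_{\varepsilon}$ is an isomorphism as well.

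Commutativity of Diagram (\ref{isos}) is immediate: $\beta_{\varepsilon}\circ\beta_{\varepsilon\delta}$ and $\beta_{\delta}$ are both induced by the same inclusion $C_{S,n}^{\delta}(X)\hookrightarrow C_{n}(X)$. Therefore $\beta_{\varepsilon\delta}=\beta_{\varepsilon}^{-1}\circ\beta_{\delta}$ is an isomorphism. I expect the main obstacle to be only the bookkeeping in fact 2 and in checking $m(\partial_{j}\sigma)\leq m(\sigma)$. The rest is a transcription of the classical argument, with "subordinate to $\mathcal{U}$" replaced by "diameter less than $\delta$."
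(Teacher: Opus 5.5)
Your proposal is correct and is essentially the paper's own argument: the paper also just reruns Hatcher's proof of Proposition 2.21, replacing ``subordinate to $\mathcal{U}$'' by ``image of diameter less than the scale'' and subdividing until the images are that small. One trivial slip in your fact 1: it is $\sigma$ that is uniformly continuous, because $\Delta^{n}$ is compact, not because $\sigma(\Delta^{n})$ is compact.
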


The proof is essentially the same as that of Proposition 2.21 in \cite{H}. The
only difference is that rather than using repeated barycentric subdivision of
a continuous simplex until the resulting simplices have a diameter smaller
than the Lebesgue number of the hypothesized open cover $\mathcal{U}$
restricted to the compact image of the simplex, one simply subdivides until
the images of the resulting simplices have diameter less than $\varepsilon$.

Suppose that $\sigma\in C_{S,n}^{\varepsilon}(X)$. Then the restriction of
$\sigma$ to $V^{n}$ is an element of $S_{n}^{\varepsilon}(X)$, and we may
define $r_{\varepsilon}:C_{S,n}^{\varepsilon}(X)\rightarrow S_{n}%
^{\varepsilon}(X)$ by linearly extending the restriction map. It is easy to
check that $r_{\varepsilon}$ is a chain map and we define $\rho_{\varepsilon
}:H_{n}(X)\rightarrow K_{n}^{\varepsilon}(X)$ as in the Introduction.

\begin{proof}
[Proof of Theorem \ref{cpd}]The fact that the vertical maps form an inverse
system is simply a restatement of the trivial fact that $j_{\varepsilon\delta
}\circ j_{\delta\tau}=j_{\varepsilon\tau}$ whenever $0<\tau<\delta
<\varepsilon$, with similar statements for the other vertical maps. The
exactness of the horizontal sequences is standard, with
\begin{equation}
\phi_{\varepsilon\delta}(\pi_{\delta}([c]_{\delta})):=\pi_{\varepsilon
}(j_{\varepsilon\delta}([c]_{\delta}))=\pi_{\varepsilon}([c]_{\varepsilon})
\label{alg}%
\end{equation}

If $[c]_{\delta}\in H_{n}^{\delta}(X)$ then%
\[
i_{\varepsilon}(\eta_{\varepsilon\delta}([c]_{\delta}))=i_{\varepsilon
}([c]_{\varepsilon})=[c]_{\varepsilon}=j_{\varepsilon\delta}([c]_{\delta
})=j_{\varepsilon\delta}(i_{\delta}([c]_{\delta}))
\]
This shows the commutativity of the left square. If $[c]_{\delta}\in
K_{n}^{\delta}(X)$ then by (\ref{alg}),
\[
\pi_{\varepsilon}(j_{\varepsilon_{\delta}}([c]_{\delta}))=\pi_{\varepsilon
}([c]_{\varepsilon})=\phi_{\varepsilon\delta}(\pi_{\delta}([c]_{\delta}))
\]
finishing the proof of commutativity.
\end{proof}

\begin{remark}
\label{facts}There are some additional facts that may be useful.

\begin{enumerate}
\item If $j_{\varepsilon\delta}$ is injective then $\eta_{\varepsilon\delta}$
is an isomorphism and $\phi_{\varepsilon\delta}$ is injective. In fact, it is
a quick diagram chase to see that $\ker\eta_{\varepsilon\delta}\subset\ker
j_{\varepsilon\delta}$ and since $\eta_{\varepsilon\delta}$ is surjective, the
injectivity of $j_{\varepsilon\delta}$ implies that $\eta_{\varepsilon\delta}$
is an isomorphism. The statement about $\phi_{\varepsilon\delta}$ is also a
simple diagram chase.

\item If $j_{\varepsilon\delta}$ is surjective then $\phi_{\varepsilon\delta}$
is surjective. Also, a diagram chase. Combining with the second fact yields
that if $j_{\varepsilon\delta}$ is an isomorphism then both $\eta
_{\varepsilon\delta}$ and $\phi_{\varepsilon\delta}$ are isomorphisms.
\end{enumerate}
\end{remark}

Discrete homotopy theory (\cite{BPTG}, \cite{BPUU}, \cite{PW1}, \cite{PW2})
provides many useful examples due to the relationship between the groups
$K_{1}^{\varepsilon}(X)$ and $\pi_{\varepsilon}(X,\ast)$, given by Theorem
\ref{fh2} below. We will quickly review a little background, almost entirely
adapted from \cite{PW1}. For any metric space $X$ and $\varepsilon>0$, an
$\varepsilon$-chain is a finite sequence $\{x_{0},...,x_{n}\}$ such that for
all $i$, $d(x_{i},x_{i+1})<\varepsilon$. There are two \textit{basic moves}:
adding a point between some $x_{i}$ and $x_{i+1}$, and removing a point
(except for an endpoint). A basic move is permissible if the resulting chain
is still an $\varepsilon$-chain. A finite sequence of basic moves is called an
$\varepsilon$-homotopy between $\varepsilon$-chains. The basic idea is that
while a (continuous) homotopy \textquotedblleft sees all 1-dimensional
holes\textquotedblright, an $\varepsilon$-homotopy only \textquotedblleft sees
holes at the scale of $\varepsilon$\textquotedblright. In a compact geodesic
space, these \textquotedblleft holes\textquotedblright\ are realized as
special closed geodesics of length $3\varepsilon$, called \textquotedblleft
essential circles\textquotedblright. Fixing a basepoint $\ast$, one may mimic
the construction of the fundamental group (and the universal cover, for that
matter, but we won't use that construction here). Concretely, given a
basepoint $\ast\in X$, $\pi_{\varepsilon}(X,\ast)$ is the group of
$\varepsilon$-homotopy equivalence classes $[\lambda]_{\varepsilon}$ of
$\varepsilon$-loops at $\ast$. The group operation is induced by
concatenation. If $0<\delta<\varepsilon$, every $\delta$-loop is an
$\varepsilon$-loop and there is a well-defined \textquotedblleft change of
scale\textquotedblright\ homomorphism $\theta_{\varepsilon\delta}:\pi_{\delta
}(X,\ast)\rightarrow\pi_{\varepsilon}(X,\ast)$ defined by $\theta
_{\varepsilon\delta}([\lambda]_{\delta})=[\lambda]_{\varepsilon}$. These maps
trivially form an inverse system. For geodesic spaces, these maps are
surjective. If $\varepsilon$ is larger than the diameter of $X$, then every
basic move is permissible, and any loop may be $\varepsilon$-homotoped to the
trivial loop--that is, $\pi_{\varepsilon}(X,\ast)=1$. If $X$ is compact and
semilocally simply connected then for some $\mu_{0}>0$, $\pi_{\varepsilon
}(X,\ast)$ is isomorphic to $\pi_{1}(X)$ for all $0<\varepsilon<\mu_{0}$. That
is, as $\varepsilon\rightarrow0$, $\pi_{\varepsilon}(X,\ast)$
\textquotedblleft sees more and more of the fundamental
group\textquotedblright, and is isomorphic to the fundamental group for small
enough $\varepsilon>0$. A positive number $\psi$ is homotopy
\textit{non-critical} if there exist $0<\delta<\psi<\varepsilon$ so that
$\theta_{\varepsilon\delta}$ is an isomorphism. Otherwise, $\psi$ is called
\textit{homotopy critical}, and the set of all homotopy critical values is
called the \textit{homotopy critical spectrum}. If $X$ is a compact geodesic
space then the set of homotopy critical values is discrete in $(0,\infty)$. In
other words, there is a discrete set of values at which the isomorphism type
of $\pi_{\varepsilon}(X)$ changes, and this discreteness is very important to
prove stability about the groups $\pi_{\varepsilon}(X)$. Stability, in turn,
is a consequence of a result of basic importance that roughly says that
\textquotedblleft$\delta$-close $\varepsilon$-chains are $(\varepsilon
+\delta)$-homotopic\textquotedblright\ (see \cite{PW1} for an exact
statement). In \cite{PW1}, Plaut-Wilkins used discrete homotopy theory to
generalize fundamental group finiteness theorems of Anderson (\cite{An}) and
Shen-Wei (\cite{ShW}) for compact Riemannian manifolds--as special cases of a
finiteness theorem for compact geodesic spaces. We note that, up to
isomorphism, the groups $\pi_{\varepsilon}(X,\ast)$ for geodesic spaces were
independently discovered by Sormani-Wei (\cite{SW1}) through an entirely
different construction that did not use discrete methods.

For the next theorem, a metric space $X$ is called $\varepsilon$-connected if
every pair of points in $X$ is joined by an $\varepsilon$-chain. The term
\textquotedblleft$\varepsilon$-component\textquotedblright\ of a point $p$
mentioned earlier is the largest $\varepsilon$-connected set containing $p$.

\begin{theorem}
(\cite{DD})\label{fh2}Let $X$ be an $\varepsilon$-connected metric space with
basepoint $\ast$. Then there is a natural surjective homomorphism
$h_{\varepsilon}:\pi_{\varepsilon}(X,\ast)\rightarrow K_{1}^{\varepsilon}(X)$,
the kernel of which is the commutator subgroup of $\pi_{\varepsilon}(X,\ast)$.
Moreover, one has the following commutative diagram for any $0<\delta
<\varepsilon$:%
\begin{equation}
\begin{tikzcd}[column sep=2.0em, row sep=2.0em] \pi_{\delta}(X,\ast) \arrow[r, "h"] \arrow[d, "\theta_{\varepsilon\delta}"'] & K_{1}^{\delta}(X) \arrow[d, "j_{\varepsilon\delta}"] \\ \pi_{\varepsilon}(X,\ast) \arrow[r, "h"] & K_{1}^{\varepsilon}(X) \end{tikzcd} \label{diac}%
\end{equation}

\end{theorem}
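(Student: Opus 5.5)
We construct $h_{\varepsilon}$ at the chain level and then check that it is an abelianization. For an $\varepsilon$-loop $\lambda=\{\ast=x_{0},x_{1},\ldots,x_{m}=\ast\}$, let
\[
\tilde{h}(\lambda)=\sum_{i=0}^{m-1}[x_{i},x_{i+1}]\in S_{1}^{\varepsilon}(X),
\]
where $[x,y]$ denotes the skeletal $1$-simplex $v_{0}\mapsto x$, $v_{1}\mapsto y$. It has diameter $d(x,y)<\varepsilon$. The boundary telescopes to $x_{m}-x_{0}=0$, so $\tilde{h}(\lambda)$ is an $\varepsilon$-cycle.

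\textbf{Well-definedness on $\pi_{\varepsilon}$.} We check invariance under the basic moves. Inserting $y$ between $x_{i}$ and $x_{i+1}$ replaces $[x_{i},x_{i+1}]$ by $[x_{i},y]+[y,x_{i+1}]$. Permissibility gives $d(x_{i},y),d(y,x_{i+1})<\varepsilon$, and $d(x_{i},x_{i+1})<\varepsilon$ already, so $\tau=(x_{i},y,x_{i+1})$ is a skeletal $(2,\varepsilon)$-simplex. Since $\partial\tau=[y,x_{i+1}]-[x_{i},x_{i+1}]+[x_{i},y]$, the two cycles differ by a boundary. Removal is the inverse move, so $h_{\varepsilon}([\lambda]_{\varepsilon})=[\tilde{h}(\lambda)]_{\varepsilon}$ is well defined. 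It is a homomorphism because concatenation of loops maps to the sum of chains. Its image is abelian, so the commutator subgroup lies in the kernel. Naturality, i.e. commutativity of Diagram (\ref{diac}), is immediate: the same $\delta$-loop gives the same chain, now read as an $\varepsilon$-chain. Degenerate simplices $[x,x]$ are boundaries of $(x,x,x)$, and $[x,y]+[y,x]=\partial(x,y,x)+[x,x]$; these identities are used below.

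\textbf{Surjectivity and the kernel.} For each $x$, use $\varepsilon$-connectedness to fix an $\varepsilon$-chain $\gamma_{x}$ from $\ast$ to $x$, with $\gamma_{\ast}$ trivial. For a $1$-simplex $[x,y]$ with $d(x,y)<\varepsilon$, set $\Phi([x,y])=[\gamma_{x}\cdot(x,y)\cdot\bar{\gamma}_{y}]_{\varepsilon}$, and extend $\Phi$ to a homomorphism $S_{1}^{\varepsilon}(X)\to\pi_{\varepsilon}(X,\ast)^{ab}$. This is possible because $S_{1}^{\varepsilon}$ is free.

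Applying $h_{\varepsilon}$ to $\Phi([x,y])$ gives $[x,y]+C_{x}-C_{y}$, where $C_{x}=\tilde{h}(\gamma_{x})$ and we have used $\tilde{h}(\bar{\gamma})\sim-\tilde{h}(\gamma)$. For a cycle $c$, the terms $C_{x}-C_{y}$ assemble according to $\partial c=0$ and cancel. Hence $h_{\varepsilon}(\Phi(c))=[c]$, which proves surjectivity.

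Next, $\Phi$ kills boundaries. For a $2$-simplex $(x,y,z)$, $\Phi(\partial(x,y,z))$ is the class of $\gamma_{x}(x,y)\bar{\gamma}_{y}\gamma_{y}(y,z)\bar{\gamma}_{z}\gamma_{z}(z,x)\bar{\gamma}_{x}$. This is $\varepsilon$-homotopic to $\gamma_{x}(x,y,z,x)\bar{\gamma}_{x}$, which is trivial. Indeed, since all pairwise distances among $x,y,z$ are less than $\varepsilon$, the loop $(x,y,z,x)$ reduces by removing $y$, then $z$. (Here I would also use that the class of $\gamma\bar{\gamma}$ is trivial.)

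Therefore $\Phi$ descends to $\bar{\Phi}:K_{1}^{\varepsilon}(X)\to\pi_{\varepsilon}^{ab}$. For a loop $\lambda$, $\bar{\Phi}(h_{\varepsilon}[\lambda])$ telescopes, since intermediate $\bar{\gamma}_{x_{i}}\gamma_{x_{i}}$ cancel, to the class of $[\lambda]$ in $\pi_{\varepsilon}^{ab}$. So $\bar{\Phi}\circ h_{\varepsilon}$ is the abelianization map, and $\ker h_{\varepsilon}$ is contained in the commutator subgroup.

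The main obstacle is the bookkeeping with discrete homotopies. One must check that $\gamma\bar{\gamma}$ is $\varepsilon$-null, by successively removing the turning points, and that degenerate steps with $x_{i}=x_{i+1}$ are handled; both are routine with the two basic moves.
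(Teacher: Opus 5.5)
The paper does not prove this statement; it is quoted from \cite{DD}, so there is no in-text argument to compare yours against. Your proof is correct and essentially complete. It is the discrete transcription of the classical argument that $H_1$ is the abelianization of $\pi_1$ (Theorem 2A.1 in \cite{H}).

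You handle correctly the one scale-specific ingredient: a skeletal $(2,\varepsilon)$-simplex is exactly the condition for a single insertion or removal to be permissible, and you use this in both directions. First, the simplex $(x_i,y,x_{i+1})$ supplies the boundary that makes $\tilde h$ invariant under basic moves. Second, the diameter bound on $(x,y,z)$ makes the loop $(x,y,z,x)$ $\varepsilon$-null, which is why $\Phi$ kills boundaries. This is why there is no loss of scale between the two sides, and why commutativity of the diagram with $\theta_{\varepsilon\delta}$ and $j_{\varepsilon\delta}$ is tautological.

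Two steps deserve an extra sentence in a write-up.

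First, $h_\varepsilon(\Phi([x,y]))$ is not meaningful for a single non-closed edge, and $h_\varepsilon$ must first be factored through $\pi_\varepsilon(X,\ast)^{ab}$ before it can be composed with $\Phi$. What you actually use is the chain identity
\[
\tilde h(\gamma_x\cdot(x,y)\cdot\bar\gamma_y)=[x,y]+C_x-C_y+(\text{boundary}),
\]
where the boundary comes from $[a,b]+[b,a]=\partial(a,b,a)+\partial(a,a,a)$. Then, for a cycle $c=\sum n_k[x_k,y_k]$, let $C$ be the linear map $x\mapsto C_x$. The correction term $\sum n_k(C_{x_k}-C_{y_k})$ equals $-C(\partial c)=0$ exactly at the chain level.

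Second, representing $-\Phi([x,z])$ by $\gamma_z\cdot(z,x)\cdot\bar\gamma_x$ uses the fact that reversal gives inverses in $\pi_\varepsilon(X,\ast)$. This follows from the same turning-point removal you describe for $\gamma\bar\gamma$.
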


As previously mentioned, inverse systems of such as $\{\pi_{\varepsilon
}(X,\ast),\theta_{\varepsilon\delta}\}$ may be equivalently viewed as
persistence modules, and therefore the family of homomorphisms
$\{h_{\varepsilon}\}$ comprise a morphism of persistence modules from the
fundamental group persistence module to the first homology persistence module.
In \cite{MZ}, M\'{e}moli and Zhou showed that the fundamental group
persistence module may equivalently be described in two other ways. As
mentioned in the introduction, they also proved, for compact, semi-locally
simply connected geodesic spaces, precisely the same stability theorem as
Theorem \ref{stab}, $n=1$. Theorem \ref{stab} and Theorem \ref{fh2} show that
the stability theorem of \cite{MZ} is true for arbitrary metric spaces with
abelian fundamental groups, but we do not know whether this is true in general.

It is useful to recall some results and examples from discrete homotopy
theory, which translate into examples about the first skeletal homology group
via Theorem \ref{fh2}. We will suppress the basepoint notation in these
examples since for a path-connected space, the isomorphism class of
$\pi_{\varepsilon}(X,\ast)$ is independent of basepoint choice in an analogous
way to the classical result (\cite{BPUU}). According to Theorem \ref{fh2},
when $\pi_{\varepsilon}(X,\ast)$ is abelian, the homotopy critical values of
$X$ are precisely the same as the first skeletal homology critical values of
$X$.

\begin{example}
\label{s1}Let $C$ be the geodesic circle of circumference $1$. From \cite{PW1}
we know that $C$ has exactly one homotopy critical value, hence (by Theorem
\ref{fh2}) one $1$-homology critical value, namely $\frac{1}{3}$. That is,
$K_{1}^{\varepsilon}(C)$ is trivial for $\varepsilon\geq\frac{1}{3}$ and is
$K_{1}^{\varepsilon}(C)=\mathbb{Z}=H_{1}(C)$ for $\varepsilon<\frac{1}{3}$.
Since the maps $\theta_{\varepsilon\delta}$ are surjective for geodesic
spaces, (\ref{diac}) shows that the maps $j_{\varepsilon\delta}$ are
surjective for all $0<\delta<\varepsilon$. For $n>1$, Adamaszek-Adams
(\cite{AA}) showed, in our language, that the circle has phantom homology in
all dimensions above $1$, and determined exactly what we call the skeletal
critical spectrum in all dimensions.
\end{example}

\begin{example}
\label{pac}Remove an open segment of length $0<\delta<\frac{1}{3}$ from the
geodesic circle $C$ to obtain a metric space $P$. Here we take the subspace
metric, not the induced geodesic metric, which would simply be an interval. In
this case, there are two homotopy critical values, hence two $1$-skeletal
homology critical values: $\psi_{1}=\frac{1}{3}$ and $\psi_{2}=\delta$. The
idea is that $\pi_{\varepsilon}(P)=K_{1}^{\varepsilon}(P)$ \textquotedblleft
sees\textquotedblright\ a \textquotedblleft phantom hole\textquotedblright\ as
soon as $\varepsilon$-chains can \textquotedblleft cross the
gap\textquotedblright\ $(\varepsilon=\delta)$ and until $\pi_{\varepsilon}(P)$
no longer \textquotedblleft sees the hole\textquotedblright\ at $\varepsilon
=\frac{1}{3}$). That is, $\pi_{\varepsilon}(P)=K_{1}^{\varepsilon}(P)$
transitions from trivial, to $\mathbb{Z}$, and back to trivial at those two
values of $\varepsilon$. In our current terminology, if $\varepsilon<\frac
{1}{3}$ then $j_{\frac{1}{3},\varepsilon}:\mathbb{Z\rightarrow}0$ is not
injective. When $0<\varepsilon<\frac{1}{3}$, $j_{\varepsilon,\frac{1}{3}%
}:0\rightarrow\mathbb{Z}$ is not surjective. Now take the spaces $P_{i}$ with
gap length $\delta_{i}\nearrow\frac{1}{3}$, and let $P_{0}$ be the Hausdorff
limit of these spaces, which has gap length equal to $\frac{1}{3}$. The
homotopy critical spectrum of $P_{i}$ is $\{\delta_{i},\frac{1}{3}\}$, which
converges in the Hausdorff metric to $\{\frac{1}{3}\}$. But no $\frac{1}{3}%
$-chain can cross the gap in $P_{0}$, and its homotopy critical spectrum is
$0$. This shows that if $X_{i}\rightarrow Y$ in the Gromov-Hausdorff metric
and $\kappa_{i}$ is a sequence of skeletal or homology critical values
$\kappa_{i}$ of $X_{i}$ with $\kappa_{i}\rightarrow\kappa>0$, $\kappa$ may not
be a homology or skeletal critical value of $Y$. This is analogous to the
\textquotedblleft disappearance\textquotedblright\ of persistence bars in a
persistence diagram.
\end{example}

\begin{example}
\label{jd}Conant, Curnutte, Jones, Plaut, Pueschel and Wilkins (\cite{CP})
constructed a class of examples called Rapunzel's combs as part of an REU
project. Rather than making a gap in a geodesic circle, as in Example
\ref{pac}, one puts a gap in each of the long sides (of fixed length $1$) of a
relatively narrow rectangle in the plane with the subspace metric. By varying
the widths of the shorter sides and lengths of the gaps and rescaling, one can
create any pair of homotopy critical values. These individual pieces can be
\textquotedblleft stacked\textquotedblright\ to obtain Rapunzel's combs that
have a critical spectrum that is not discrete--with positive limit points of
both increasing and decreasing sequences. Note that this space is not locally
path connected, and has trivial singular homology, hence only phantom
homology. Wilkins (\cite{WD}) took infinitely many Rapunzel's combs and glued
them together to get a bounded (but not compact) subset of separable Hilbert
space that has homotopy critical values equal to the interval $(0,1]$.
\end{example}

\begin{example}
\label{rev}Consider the surface of revolution $M$ of the graph of $x=e^{z}$
about the $x$-axis, with the induced Riemannian metric (or even the subspace
metric, if one chooses). As shown in \cite{PW1}, $\pi_{\varepsilon}(M,\ast)=1$
for all $\varepsilon>0$. The idea is that any loop at the basepoint $\ast$
that generates $\pi_{1}(M,\ast)$, i.e. it \textquotedblleft wraps
around\textquotedblright\ this topological cylinder one time, is
fixed-endpoint homotopic to a loop that consists of a path $\alpha$ from
$\ast$ to a point $(-t,0,e^{-t})$, concatenated by a loop $\lambda$ whose
image is the intersection of $M$ with the plane $x=-t$, concatenated with the
reversal $\overline{\alpha}$ of $\alpha$. For fixed $\varepsilon>0$ and large
enough $t$, the loop $\lambda$ is $\varepsilon$-null because the
\textquotedblleft hole is small enough to cross with an $\varepsilon
$-homotopy\textquotedblright. By Theorem \ref{fh2}, $K_{1}^{\varepsilon}(M)=0$
for all $\varepsilon>0$. Viewed directly as homology, for any fixed
$\varepsilon>0$, any cycle that generates continuous $1$-homology bounds a
$(2,\varepsilon)$-chain that includes a skeletal $1$-simplex of diameter less
than $\varepsilon$ that \textquotedblleft wraps around\textquotedblright\ the
cylinder. This example shows that for non-compact spaces, $H_{n}^{\varepsilon
}(X)$ may \textquotedblleft miss\textquotedblright\ some continuous homology
for all $\varepsilon$.
\end{example}

\begin{example}
\label{2t} Take flat $2$-tori $T_{i}$ obtained from rectangles with side
lengths $1$ and $\frac{1}{i}$. As was shown in \cite{PW1}, $T_{i}$ has
homotopy critical spectrum $\{\frac{i}{3},\frac{1}{3}\}$, which converges to
$\{0,\frac{1}{3}\}$ in the Hausdorff metric. The Gromov-Hausdorff limit of
these tori is the geodesic circle $C$ mentioned above, which has homotopy
critical spectrum $\{\frac{1}{3}\}$, meaning that one homotopy/homology
critical value vanishes in the end.
\end{example}

As mentioned previously, combinatorial arguments from singular homology carry
over readily to skeletal homology. To illustrate this, we consider homology of
pairs, using Hatcher, Section 2.1, as a notational model. The necessary
arguments are for the most part exactly the same as those for classical
singular homology, and we will omit many details. For $A\subset X$, define
$S_{n}^{\varepsilon}(X,A)$ to be the quotient $S_{n}^{\varepsilon}%
(X)/S_{n}^{\varepsilon}(A)$. We have an induced boundary map $\partial
_{\varepsilon}:S_{n}^{\varepsilon}(X,A)\rightarrow S_{n-1}^{\varepsilon}(X,
A)$, and we let $K_{n}^{\varepsilon}(X,A)$ denote the homology of the
resulting chain complex. We have the following commutative diagram:%
\[
\begin{tikzcd}[column sep=2.0em, row sep=2.0em]
0
\arrow[r]
& S_{n}^{\varepsilon}(A)
\arrow[r, "i"]
\arrow[d, "\partial"']
& S_{n}^{\varepsilon}(X)
\arrow[r, "\pi"]
\arrow[d, "\partial"']
& S_{n}^{\varepsilon}(X,A)
\arrow[r]
\arrow[d, "\partial"]
& 0
\\
0
\arrow[r]
& S_{n-1}^{\varepsilon}(A)
\arrow[r, "i"']
& S_{n-1}^{\varepsilon}(X)
\arrow[r, "\pi"']
& S_{n-1}^{\varepsilon}(X,A)
\arrow[r]
& 0
\end{tikzcd}
\]

From this point, it is a purely algebraic fact that there is a long exact
sequence
\[
\cdots\longrightarrow K_{n}^{\varepsilon}(A) \xrightarrow{i_{\ast}}
K_{n}^{\varepsilon}(X) \xrightarrow{\pi_{\ast}} K_{n}^{\varepsilon}(X,A)
\xrightarrow{\partial} K_{n-1}^{\varepsilon}(A) \longrightarrow\cdots.
\]
where the boundary map is defined by taking the homology class of the
equivalence class of a chain $b\in K_{n}^{\varepsilon}(X)$ to
$[a]_{\varepsilon}\in K_{n-1}^{\varepsilon}(A)$, where $a$ is such that
$i(a)=\partial b$.

\section{VR Complexes}

Given a metric space $X$, recall that the Vietoris-Rips (VR) complex
$VR_{\varepsilon}(X)$ at the scale of $\varepsilon>0$ consists of the
simplicial complex, the $n$-simplices $\Sigma=\{x_{0},...,x_{n}\}$ of which
are subsets of $X$ of diameter less than $\varepsilon$ with $n+1$ points. The
main result of this section is that $K_{\varepsilon}^{n}(X)$ is isomorphic to
the simplicial homology $H_{n}^{\Delta}(VR_{\varepsilon}(X))$. The model
theorem is the equivalence of simplicial and singular homology (Theorem 2.27
in \cite{H}). However, that proof relies on the simplicial homology
$H_{n}^{\Delta}(X^{k},X^{k-1})$ of pairs of the $k$-skeleta of the complex
$X$. For our theorem, $X$ is simply a metric space, with no notion of
$k$-skeleta. Therefore, a new approach is needed. We exploit the fact that the
domains of skeletal simplices are finite, and replace the long exact sequence
for the pair $(X^{k}, X^{k-1})$ by a long exact sequence involving the
cardinality of the images of $n$-simplices.

\begin{definition}
\label{fakepair}Let $X$ be a metric space, $k,n$ be natural numbers. We define
$S_{n,k}^{\varepsilon}(X)\subset S_{n}^{\varepsilon}(X)$ to be the free
abelian group generated by all simplices $\sigma$ such that the cardinality
$\left\vert \sigma(V^{n})\right\vert $ is at most $k+1$. In order to maintain
a formal analogy with the traditional sequence of pairs, we denote
$S_{n,k}^{\varepsilon}(X)/S_{n,k-1}^{\varepsilon}(X)$ by $S_{n}^{\varepsilon
}(X^{k},X^{k-1})$.
\end{definition}

For use below, we note that $S_{n}^{\varepsilon}(X^{k},X^{k-1})$ is the free
abelian group generated by equivalence classes of $(n,\varepsilon)$-simplices
$\sigma$ such that $\left\vert \sigma(V^{n})\right\vert =k+1$. We have a
sequence of inclusions
\[
S_{n,0}^{\varepsilon}(X)\subset S_{n,1}^{\varepsilon}(X)\subset\cdots\subset
S_{n,n}^{\varepsilon}(X)=S_{n}^{\varepsilon}(X)
\]
Note that if $\left\vert \sigma(V^{n})\right\vert \leq k+1$ then $\left\vert
\partial\sigma(V^{n})\right\vert \leq k+1$ since $\partial\sigma$ is a linear
combination of restrictions of $\sigma$. Moreover, since the argument that
$\partial^{2}=0$ is purely combinatoric, for each $k$ we have a chain complex
\[
\cdots\xrightarrow{\partial}S_{n+1,k}^{\varepsilon}%
(X)\xrightarrow{\partial}S_{n,k}^{\varepsilon}(X)\xrightarrow{\partial}\cdots
\xrightarrow{\partial}S_{k,k}^{\varepsilon}(X)\xrightarrow{\partial}\cdots
\xrightarrow{\partial}S_{0,k}^{\varepsilon}(X)\longrightarrow0
\]
We will denote the homology of this sequence by $K_{n,k}^{\varepsilon}(X)$.
For $k\geq n$, $S_{n,k}^{\varepsilon}(X)=S_{n}^{\varepsilon}(X)$ and therefore
$K_{n,k}^{\varepsilon}(X)=K_{n}^{\varepsilon}(X)$. Even so, the formal process
below will be carried out without assuming $k\leq n$. We have the inclusion
$i:S_{n,k-1}^{\varepsilon}(X)\rightarrow S_{n,k}^{\varepsilon}(X)$ and the
quotient map $j:S_{n,k}^{\varepsilon}(X)\rightarrow S_{n,k}^{\varepsilon
}(X)/S_{n,k-1}^{n}(X)$.

We have the following commutative diagram:%
\begin{equation}
\begin{tikzcd}[column sep=2.0em, row sep=2.0em] 0 \arrow[r] & S_{n,k-1}^{\varepsilon}(X) \arrow[r, "i"] \arrow[d, "\partial"'] & S_{n,k}^{\varepsilon}(X) \arrow[r, "j"] \arrow[d, "\partial"'] & S_{n}^{\varepsilon}(X^{k},X^{k-1}) \arrow[r] \arrow[d, "\partial"] & 0 \\ 0 \arrow[r] & S_{n-1,k-1}^{\varepsilon}(X) \arrow[r, "i"'] & S_{n-1,k}^{\varepsilon}(X) \arrow[r, "j"'] & S_{n-1}^{\varepsilon}(X^{k},X^{k-1}) \arrow[r] & 0 \end{tikzcd} \label{sk3}%
\end{equation}

The right-hand boundary map is induced on the quotient. We define the groups
$K_{n}^{\varepsilon}(X^{k}, X^{k-1})$ to be the homology groups of the
right-hand vertical chain complex. Whenever one has a commutative diagram of
the type (\ref{sk3}), there is a formal algebraic process to obtain a long
exact sequence, see \cite{H}, p. 116. For those less familiar with this
process, we provide a couple of details. We define the boundary map
$\partial:K_{n}^{\varepsilon}(X^{k},X^{k-1})\rightarrow K_{n-1,k-1}%
^{\varepsilon}(X)$ as follows. Every element of $K_{n}^{\varepsilon}%
(X^{k},X^{k-1})$ is the homology class $[\overline{c}]$ of a \textit{relative
cycle }$\overline{c}$; that is, $c\in S_{n,k}^{\varepsilon}(X)$ such that
$\partial c\in S_{n-1,k-1}^{\varepsilon}(X)$. Since $\partial^{2}=0$,
$\partial c$ is a cycle in $S_{n-1,k-1}^{\varepsilon}(X)$. Therefore we may
define $\partial\lbrack\overline{c}]=[\partial c]\in K_{n-1,k-1}^{\varepsilon
}(X)$. Some diagram chasing establishes the following long exact sequence for
any fixed $k\geq0$:
\begin{equation}
\cdots\xrightarrow{\partial} K_{n,k-1}^{\varepsilon}(X) \xrightarrow{i_{\ast}}
K_{n,k}^{\varepsilon}(X) \xrightarrow{j_{\ast}} K_{n}^{\varepsilon}%
(X^{k},X^{k-1}) \xrightarrow{\partial} K_{n-1,k-1}^{\varepsilon}(X)
\xrightarrow{i_{\ast}} \cdots\label{2rr}%
\end{equation}

To be clear, \textquotedblleft$X^{k}$\textquotedblright\ by itself has no
meaning because the metric space $X$ may not have a $k$-skeleton. Therefore,
this sequence should not be confused with the long exact sequence for the
pair. When $n<k$, the groups $K_{n}^{\varepsilon}(X^{k},X^{k-1})$ vanish
because $K_{n,k}^{\varepsilon}(X)=K_{n}^{\varepsilon}(X)$ and $K_{n-1,k-1}%
^{\varepsilon}(X)=K_{n-1}^{\varepsilon}(X)$.

\begin{theorem}
\label{30}Let $X$ be a metric space and let $\varepsilon>0$. Then
\[
K_{n,k}^{\varepsilon}(X)=0\qquad\text{and}\qquad K_{n}^{\varepsilon}%
(X^{k},X^{k-1})=0
\]
whenever $0\leq k<n$.
\end{theorem}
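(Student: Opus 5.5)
The plan is an induction on $k$ using the long exact sequence (\ref{2rr}). The key observation is that the relative complex $S_{\ast}^{\varepsilon}(X^{k},X^{k-1})$ splits as a direct sum of "local" complexes, one for each finite subset $A\subset X$ with $|A|=k+1$ and $\operatorname{diam}A<\varepsilon$. Each local complex can be understood entirely inside the finite space $A$, where everything is controlled by a cone construction.

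\textbf{Step 1 (splitting).} A generator of $S_{n}^{\varepsilon}(X^{k},X^{k-1})$ is the class of some $\sigma$ with $|\sigma(V^{n})|=k+1$. In the quotient, $\partial\sigma$ keeps only those faces $\sigma^{j}$ whose image is still the same set $A=\sigma(V^{n})$. Hence the relative complex is $\bigoplus_{A}R_{\ast}^{A}$, where $R_{n}^{A}$ is freely generated by the surjections $V^{n}\rightarrow A$. Moreover $R_{\ast}^{A}=S_{\ast}(A^{k},A^{k-1})$ for the finite metric space $A$; since $\operatorname{diam}A<\varepsilon$, every map $V^{n}\rightarrow A$ is an $\varepsilon$-simplex. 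So it suffices to show that $H_{n}(R^{A})=0$ for $n>k$.

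\textbf{Step 2 (the finite space $A$ is acyclic).} Let $E_{\ast}^{A}=S_{\ast}^{\varepsilon}(A)$, which is generated by all maps $V^{n}\rightarrow A$. Fix $a\in A$ and define the cone operator $C(\sigma)=(a,\sigma(v_{0}),\ldots,\sigma(v_{n}))$. This is again a skeletal simplex, since $\operatorname{diam}A<\varepsilon$. The standard computation $\partial C+C\partial=\mathrm{id}$ in positive degrees (as in the proof that convex sets are acyclic in \cite{H}) gives $K_{n}^{\varepsilon}(A)=0$ for $n\geq1$. Since $|A|=k+1$, we have $S_{n,k}^{\varepsilon}(A)=S_{n}^{\varepsilon}(A)$, and therefore $K_{n,k}^{\varepsilon}(A)=0$ for all $n\geq1$.

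\textbf{Step 3 (induction on $k$).} The claim to prove for every metric space $Y$ is $P(k)$: $K_{m,k}^{\varepsilon}(Y)=0$ for all $m>k$.

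For the base case $k=0$, note that $S_{n,0}^{\varepsilon}(Y)$ is freely generated by the constant simplices $c_{y}^{n}$. We have $\partial c_{y}^{n}=\sum_{j}(-1)^{j}c_{y}^{n-1}$, which equals $c_{y}^{n-1}$ for $n$ even and $0$ for $n$ odd. So for each point $y$ the complex is $\cdots\rightarrow\mathbb{Z}\xrightarrow{\mathrm{id}}\mathbb{Z}\xrightarrow{0}\mathbb{Z}\rightarrow0$, with $\mathrm{id}$ in even degrees $\geq 2$ and $0$ in odd degrees. Its homology vanishes in all degrees $n\geq1$.

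For the inductive step, assume $P(k-1)$ holds for all spaces, and let $n>k$.
\begin{itemize}
\item First apply (\ref{2rr}) to the finite space $A$. The relevant segment is $K_{n,k}^{\varepsilon}(A)\rightarrow K_{n}^{\varepsilon}(A^{k},A^{k-1})\rightarrow K_{n-1,k-1}^{\varepsilon}(A)$.
\item The left group vanishes by Step 2. The right group vanishes by $P(k-1)$, since $n-1>k-1$. By exactness, $H_{n}(R^{A})=K_{n}^{\varepsilon}(A^{k},A^{k-1})=0$.
\item By Step 1, this gives $K_{n}^{\varepsilon}(X^{k},X^{k-1})=0$ for $n>k$, which is the second assertion of the theorem.
\item Now apply (\ref{2rr}) to $X$. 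The segment $K_{n,k-1}^{\varepsilon}(X)\rightarrow K_{n,k}^{\varepsilon}(X)\rightarrow K_{n}^{\varepsilon}(X^{k},X^{k-1})$ has both ends zero, the left by $P(k-1)$ and the right by what was just shown. Hence $K_{n,k}^{\varepsilon}(X)=0$.
\end{itemize}

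The main obstacle is Step 1. One must verify carefully that the quotient boundary preserves the image set $A$, so that the decomposition into the $R^{A}$ is a decomposition of chain complexes. One must also check that $R^{A}$ coincides, as a chain complex, with the relative complex of the finite space $A$. Once this identification is in place, the rest is the cone argument and formal exactness.
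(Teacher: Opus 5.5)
Your proof is correct. Its overall skeleton matches the paper's: induction on $k$, the long exact sequence (\ref{2rr}), and the constant-simplex base case. The difference is in the key step, showing $K_{n}^{\varepsilon}(X^{k},X^{k-1})=0$ for $n>k$. The paper works on the relative complex of $X$ with the vertex-duplicating operator $T(\sigma)=(x_{0},x_{0},x_{1},\ldots,x_{n})$ and asserts $\partial T(\sigma)=\sigma-T(\partial\sigma)$. You instead do three things:
\begin{enumerate}
\item split the relative complex as $\bigoplus_{A}R^{A}_{\ast}$ over image sets $A$;
\item identify each summand with the relative complex of the finite space $A$;
\item kill its homology with a cone on a fixed apex $a\in A$, together with (\ref{2rr}) for $A$ and the inductive hypothesis applied to $A$.
\end{enumerate}
The cone is legitimate because every map $V^{n}\rightarrow A$ is an $\varepsilon$-simplex when $\operatorname{diam}A<\varepsilon$.

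Your route is the sounder one. The faces of $T(\sigma)$ obtained by deleting $v_{0}$ and deleting $v_{1}$ are both $\sigma$, and they cancel. The correct identity is therefore $\partial T(\sigma)=T(\sigma^{0})-T(\partial\sigma)$, where $\sigma^{0}$ is the face opposite $v_{0}$; for instance $\partial T(x_{0},x_{1})=(x_{0},x_{0})$. In the relative complex, $T(\sigma^{0})$ is nonzero whenever $x_{0}$ occurs more than once in $\sigma$. So the paper's computation does not show that a relative cycle is a relative boundary.

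An identity of the form $\partial C+C\partial=\mathrm{id}$ needs an apex independent of $\sigma$, and no such apex exists globally in $X$. Localizing to a single $(k+1)$-point set of diameter less than $\varepsilon$ is exactly what supplies one. The cost is verifying the splitting, and your Step 1 does this correctly: each face of a generator either keeps image $A$ or lies in $S_{n-1,k-1}^{\varepsilon}(X)$.

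One case is left implicit in both arguments: the second assertion for $k=0$. It is immediate. With $S_{\ast,-1}^{\varepsilon}=0$, we have $K_{n}^{\varepsilon}(X^{0},X^{-1})=K_{n,0}^{\varepsilon}(X)$, which your base case already handles.
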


\begin{proof}
We prove the first statement by induction on $k$. For $k=0$, $S_{n,0}%
^{\varepsilon}(X)$ is generated by constant skeletal $n$-simplices. We denote
the constant $m$-simplex with $\sigma(V^{m})=x$ by $\sigma_{x}^{m}$. Then
\[
\partial\sigma_{x}^{m}=\sum_{j=0}^{m}(-1)^{j}\sigma_{x}^{m-1}=%
\begin{cases}
0 & m\text{ odd,}\\
\sigma_{x}^{m-1} & m\text{ even.}%
\end{cases}
\]
If $n$ is odd, then every $\sigma_{x}^{n}$ is a cycle and $\partial\sigma
_{x}^{n+1}=\sigma_{x}^{n}$, showing that $K_{n,0}^{\varepsilon}(X)=0$.
\newline If $n$ is even and $c=\sum_{i}k_{i}\sigma_{x_{i}}^{n}$ is a cycle,
then
\[
\partial\left(  \sum_{i}k_{i}\sigma_{x_{i}}^{n+1}\right)  =\sum_{i}%
k_{i}\partial\sigma_{x_{i}}^{n+1}=\sum_{i}k_{i}\sigma_{x_{i}}^{n}=c.
\]
Thus $K_{n,0}^{\varepsilon}(X)=0$ in this case as well.

Now suppose $m\geq1$ and assume inductively that $K^{\varepsilon}_{r,m-1}(X)=0
$ whenever $r>m-1$. We prove that $K^{\varepsilon}_{n,m}(X)=0 $ whenever $n>m$.

Consider the following portion of (\ref{2rr}):%
\[
\cdots\xrightarrow{\partial} K_{n,m-1}^{\varepsilon}(X) \xrightarrow{i_{\ast}}
K_{n,m}^{\varepsilon}(X) \xrightarrow{j_{\ast}} K_{n}^{\varepsilon}%
(X^{m},X^{m-1}) \xrightarrow{\partial} K_{n-1,m-1}^{\varepsilon}(X)
\xrightarrow{i_{\ast}} \cdots
\]
Since $n>m$, we have $n>m-1\text{ and }n-1>m-1$. Therefore, by the inductive
hypothesis, $K_{n,m-1}^{\varepsilon}(X)=0\text{ and } K_{n-1,m-1}%
^{\varepsilon}(X)=0$. Hence $j_{\ast}:K_{n,m}^{\varepsilon}(X)\rightarrow
K_{n}^{\varepsilon}(X^{m},X^{m-1})$ is an isomorphism. It remains to show that
$K_{n}^{\varepsilon}(X^{m},X^{m-1})=0$ for $n>m$.

Let $[c]\in K_{n}^{\varepsilon}(X^{m},X^{m-1})$. By the description of the
quotient group, we may represent $c$ by a relative cycle of the form
$c=\sum_{i}k_{i}\sigma_{i}$, where each $\sigma_{i}$ is an $(n,\varepsilon
)$-simplex satisfying $|\sigma_{i}(V^{n})|=m+1$. For a generator $\sigma$,
write $\sigma(v_{i})=x_{i}$. Define $T(\sigma)$ to be the skeletal
$(n+1)$-simplex given by $T(\sigma)(v_{0})=x_{0},T(\sigma)(v_{i})=x_{i-1}%
\quad\text{for }i>0$. Then $T(\sigma)\in S_{n+1,m}^{\varepsilon}(X)$, since it
has the same image as $\sigma$. Extend $T$ linearly to chains. A direct
calculation gives $\partial T(\sigma)=\sigma-T(\partial\sigma)$.\newline Thus,
if $b=\sum_{i}k_{i}T(\sigma_{i})$, then $\partial b=c-T(\partial c)$. Since
$c$ is a relative cycle, $\partial c\in S_{n-1,m-1}^{\varepsilon}(X)$. Because
$T$ preserves image cardinality, $T(\partial c)\in S_{n,m-1}^{\varepsilon}%
(X)$. Therefore, $T(\partial c)$ vanishes in the quotient $S_{n}^{\varepsilon
}(X^{m},X^{m-1})=S_{n,m}^{\varepsilon}(X)/S_{n,m-1}^{\varepsilon}(X)$. Hence,
in the quotient complex, $c=\partial b$. Thus $c$ is a relative boundary, and
so $K_{n}^{\varepsilon}(X^{m},X^{m-1})=0$.

Since $\pi_{*}$ is an isomorphism, it follows that $K^{\varepsilon}_{n,m}(X)=0
$ for all $n>m$. This completes the induction.
\end{proof}

\begin{proof}
[Proof of Theorem \ref{VR}]Let $VR_{\varepsilon}^{k}(X)$ denote the
$k$-skeleton of the Vietoris--Rips complex $VR_{\varepsilon}(X)$. We compare
the filtration by skeleta $VR_{\varepsilon}^{0}(X)\subseteq VR_{\varepsilon
}^{1}(X)\subseteq\cdots\subseteq VR_{\varepsilon}^{k}(X)\subseteq\cdots$ with
the image-cardinality filtration of the skeletal chain complex. More
precisely, for every $n,k\geq0$, let $S_{n,k}^{\varepsilon}(X)\subseteq
S_{n}^{\varepsilon}(X)$ be the subgroup generated by the $(n,\varepsilon
)$-skeletal simplices whose images contain at most $k+1$ points. Thus, for
each fixed $n$, $S_{n,0}^{\varepsilon}(X)\subseteq S_{n,1}^{\varepsilon
}(X)\subseteq\cdots\subseteq S_{n,k}^{\varepsilon}(X)\subseteq\cdots\subseteq
S_{n}^{\varepsilon}(X).$ For each $k\geq0$, define a chain map
\[
\kappa_{n}^{k}:C_{n}^{\Delta}\bigl(VR_{\varepsilon}^{k}%
(X)\bigr)\longrightarrow S_{n,k}^{\varepsilon}(X)
\]
on an oriented simplex by $[x_{0},\ldots,x_{n}]\longmapsto\sigma$, where
$\sigma(v_{i})=x_{i}$.

The image of $\sigma$ has cardinality $n+1\leq k+1$, so $\sigma\in
S_{n,k}^{\varepsilon}(X)$. Since the boundary maps on both sides are given by
the same alternating sum over faces, $\kappa_{n}^{k}$ is a chain map.
Therefore, it induces homomorphisms
\[
\kappa_{\ast,n}^{k}:H_{n}^{\Delta}(VR_{\varepsilon}^{k}(X))\longrightarrow
K_{n,k}^{\varepsilon}(X)\text{.}%
\]
We prove by induction on $k$ that $\kappa_{\ast, n}^{k}$ is an isomorphism for
every $k$.

For $k=0$, $VR_{\varepsilon}^{0}(X)$ is the set of vertices of $X$, and
$S_{n,0}^{\varepsilon}(X)$ is generated by constant skeletal simplices. By the
base case in the proof of Theorem \ref{30}, $K_{n,0}^{\varepsilon}(X)=0$ for
$n>0$, and in degree $0$ both sides are the free abelian group generated by
the points of $X$. Hence
\[
\kappa_{\ast,n}^{0}:H_{n}^{\Delta}(VR_{\varepsilon}^{0}(X))\longrightarrow
K_{n,0}^{\varepsilon}(X)
\]
is an isomorphism for all $n$.

Now assume that $\kappa_{\ast,n}^{k-1}$ is an isomorphism in every degree. For
a fixed $n$, we compare the long exact sequence of the pair $(VR_{\varepsilon
}^{k}(X),VR_{\varepsilon}^{k-1}(X))$ with the long exact sequence associated
to
\[
0\longrightarrow S_{n,k-1}^{\varepsilon}(X)\longrightarrow S_{n,k}%
^{\varepsilon}(X)\longrightarrow S_{n}^{\varepsilon}(X^{k},X^{k-1}%
)\longrightarrow0.
\]
This gives a commutative diagram
\[
\begin{adjustbox}{max width=\textwidth}
\begin{tikzcd}[
column sep=0.50em,
row sep=3.5em
]
\cdots
\arrow[r]
&
H^\Delta_n\!\left(VR^{k-1}_\varepsilon(X)\right)
\arrow[r]
\arrow[d, "\kappa^{k-1}_{\ast,n}"']
&
H^\Delta_n\!\left(VR^k_\varepsilon(X)\right)
\arrow[r]
\arrow[d, "\kappa^k_{\ast,n}"']
&
H^\Delta_n\!\left(VR^k_\varepsilon(X), VR^{k-1}_\varepsilon(X)\right)
\arrow[r]
\arrow[d, "\kappa^{\mathrm{rel}}_{\ast,n}"']
&
H^\Delta_{n-1}\!\left(VR^{k-1}_\varepsilon(X)\right)
\arrow[r]
\arrow[d, "\kappa^{k-1}_{\ast,n}"']
&
\cdots
\\
\cdots
\arrow[r]
&
K^\varepsilon_{n,k-1}(X)
\arrow[r]
&
K^\varepsilon_{n,k}(X)
\arrow[r]
&
K^\varepsilon_n(X^k,X^{k-1})
\arrow[r]
&
K^\varepsilon_{n-1,k-1}(X)
\arrow[r]
&
\cdots
\end{tikzcd}
\end{adjustbox}
\]
We now identify the relative vertical map. On the Vietoris--Rips side,
\[
H_{n}^{\Delta}(VR_{\varepsilon}^{k}(X),VR_{\varepsilon}^{k-1}(X))=0
\]
unless $n=k$. When $n=k$, this relative group is freely generated by the
oriented $k$-simplices $[x_{0},\ldots,x_{k}]$ of $VR_{\varepsilon}(X)$.

On the skeletal side, by Theorem \ref{30}, $K_{n}^{\varepsilon}(X^{k}%
,X^{k-1})=0$ whenever $n>k$. Also, if $n<k$, then $S_{n}^{\varepsilon}%
(X^{k},X^{k-1})=0$, because an $n$-skeletal simplex has at most $n+1$ image
points and therefore cannot have image cardinality $k+1$. Hence $K_{n}%
^{\varepsilon}(X^{k},X^{k-1})=0$ for $n<k$ as well.

Thus $K_{n}^{\varepsilon}(X^{k},X^{k-1})$ can be nonzero only when $n=k$. When
$n=k$, the quotient group
\[
S_{k}^{\varepsilon}(X^{k},X^{k-1})=S_{k,k}^{\varepsilon}(X)/S_{k,k-1}%
^{\varepsilon}(X)
\]
is generated by the relative classes of $(k,\varepsilon)$-skeletal simplices
$\sigma:V^{k}\rightarrow X$ satisfying $|\sigma(V^{k})|=k+1$. Such a simplex
is precisely an ordering of a $(k+1)$-point subset $\{x_{0},\ldots
,x_{k}\}\subset X$ of diameter less than $\varepsilon$; that is, an oriented
$k$-simplex of $VR_{\varepsilon}(X)$. The boundary maps agree, and different
orderings differ by the usual sign. Therefore
\[
\kappa_{\ast,k}^{rel}:H_{k}^{\Delta}(VR_{\varepsilon}^{k}(X),VR_{\varepsilon
}^{k-1}(X))\longrightarrow K_{k}^{\varepsilon}(X^{k},X^{k-1})
\]
is an isomorphism for every $n$.

By the induction hypothesis, the left and right vertical maps in the long
exact sequence diagram are isomorphisms. Since the relative vertical map is
also an isomorphism, the Five Lemma (\cite{H}) implies that $\kappa_{\ast
,n}^{k}: H_{n}^{\Delta}(VR_{\varepsilon}^{k}(X))\longrightarrow K_{n,k}%
^{\varepsilon}(X)$ is an isomorphism for every $n$. This completes the induction.

Now fix $n$. Since simplicial homology in degree $n$ only depends on chains in
degrees $n-1$, $n$, and $n+1$, we have $H_{n}^{\Delta}(VR_{\varepsilon
}(X))\cong H_{n}^{\Delta}(VR_{\varepsilon}^{n+1}(X)).$ Similarly,
$K_{n}^{\varepsilon}(X)\cong K_{n,n+1}^{\varepsilon}(X)$, because
$S_{r,n+1}^{\varepsilon}(X)=S_{r}^{\varepsilon}(X)$ for every $r\leq n+1$.
Taking $k=n+1$ in the filtration-level isomorphism proved above gives
$H_{n}^{\Delta}(VR_{\varepsilon}^{n+1}(X))\cong K_{n,n+1}^{\varepsilon}(X).$
Combining these identifications yields $H_{n}^{\Delta}(VR_{\varepsilon
}(X))\cong K_{n}^{\varepsilon}(X)$.

Finally, the commutativity of Diagram (\ref{dr}) may be seen as follows: Then
the inclusion $VR_{\delta}(X)\hookrightarrow VR_{\varepsilon}(X)$ induces
$f_{\varepsilon\delta}:H_{n}^{\Delta}(VR_{\delta}(X))\rightarrow H_{n}%
^{\Delta}(VR_{\varepsilon}(X))$. On the skeletal side, the inclusion
$S_{n}^{\delta}(X)\hookrightarrow S_{n}^{\varepsilon}(X)$ induces
$j_{\varepsilon\delta}:K_{n}^{\delta}(X)\rightarrow K_{n}^{\varepsilon}(X)$.
Since $\kappa_{n}$ sends each ordered Vietoris--Rips simplex to the skeletal
simplex with the same ordered vertices, applying $\kappa_{n}$ and then
changing scale results in the same chain as first changing scale and then
applying $\kappa_{n}$.
\end{proof}

\begin{remark}
\label{VRR}Goldfarb's proof of this isomorphism for finite metric spaces uses
a simplicial-set comparison (\cite{G}). Our proof of Theorem \ref{VR} gives a
direct chain-level comparison by filtering the skeletal complex according to
the cardinality of the image of a simplex. This shows explicitly that the
filtration layer supported on at most $m+1$ vertices contributes no homology
above degree $m$. In particular, every class in $K_{n}^{\varepsilon}(X)$ can
be represented by a cycle whose $n$-simplices have $n+1$ distinct vertices,
although repeated-vertex simplices may still occur in bounding chains.
\end{remark}

\begin{remark}
The construction of the groups $K_{n}^{\varepsilon}(X^{n},X^{n-1}) $ is
somewhat similar to the skeletal-pair construction used for simplicial
complexes; compare the long exact sequence for the pairs $(X^{n},X^{n-1})$ in
\cite{H}. It is therefore natural to ask whether $K_{n}^{\varepsilon}(X) \cong
K_{n}^{\varepsilon}(X^{n},X^{n-1}). $ If $K_{n,n-1}^{\varepsilon}(X)=0, $ then
the long exact sequence gives an injection $j_{\ast}:K_{n}^{\varepsilon
}(X)\hookrightarrow K_{n}^{\varepsilon}(X^{n},X^{n-1}).$ The following example
shows that this injection need not be surjective.
\end{remark}

\begin{example}
Let $X=\{v_{1},v_{2},v_{3},v_{4}\}$ be the four vertices of a square, with
$d(v_{i},v_{i+1})=1,\;\;d(v_{1},v_{3})=d(v_{2},v_{4})=\sqrt{2}.$ Set
$\varepsilon=\sqrt{2}$. Thus adjacent pairs are allowable, while diagonal
pairs are not. Every subset of three distinct vertices contains a diagonal
pair. Hence every allowable $\sqrt{2}$-simplex has at most two distinct image
points, and $K_{1,1}^{\sqrt{2}}(X)=K_{1}^{\sqrt{2}}(X)=\mathbb{Z}.$ Since
$K_{1,0}^{\sqrt{2}}(X)=0\;\text{and}\;K_{0}^{\sqrt{2}}(X^{1},X^{0})=0,$the
long exact sequence of the pair $(X^{1},X^{0})$ gives
\[
0\longrightarrow K_{1}^{\sqrt{2}}(X)\overset{j_{\ast}}{\longrightarrow}%
K_{1}^{\sqrt{2}}(X^{1},X^{0})\overset{\partial}{\longrightarrow}K_{0,0}%
^{\sqrt{2}}(X)\overset{i_{\ast}}{\longrightarrow}K_{0,1}^{\sqrt{2}%
}(X)\longrightarrow0.\tag{\(\ast\)}
\]
For an adjacent pair $\{a,b\}$, the relative $1$-chains are generated by
$[ab]$ and $[ba]$. Modulo constant edges, the allowable $2$-simplices give the
single relation $[ab]+[ba]=0.$ Thus, each square edge contributes one copy of
$\mathbb{Z}$. Since no allowable $2$-simplex can involve two distinct square
edges, these relations do not mix the four edges. Therefore $K_{1}^{\sqrt{2}%
}(X^{1},X^{0})\cong\mathbb{Z}^{4}.$ Moreover, $K_{0,0}^{\sqrt{2}}%
(X)\cong\mathbb{Z}^{4},\;K_{0,1}^{\sqrt{2}}(X)=K_{0}^{\sqrt{2}}(X)\cong%
\mathbb{Z}.$ Consequently, $K_{1}^{\sqrt{2}}(X)$ is not isomorphic to
$K_{1}^{\sqrt{2}}(X^{1},X^{0}).$
\end{example}

\section{Uniformly Locally Contractible Spaces}

This section is devoted to the proof of Theorem \ref{HM} and related topics.
We first recall a basic construction from topology in this metric space
setting. Suppose that $B=B(v,\varepsilon)$ is a contractible ball in a metric
space $X$. Recall that the cone $c(B)$ is defined to be $I\times
B/\symbol{126}$ with the quotient topology where $\symbol{126}$ identifies all
points $(0,x)$ to a single point. Now suppose that $c:I\times B\rightarrow B$
is a contraction to $v$, i.e. a continuous function such that $c(1,x)=x$ and
$c(0,x)=v$ for all $x\in B$. We define the \textit{singular cone map }%
$\psi_{B}:c(B)=I\times B/\symbol{126}=B\rightarrow B$ with $\psi
_{B}([t,x])=c(t,x)$ to be the induced map of $c$ on the quotient space $c(B)$.
If $f:A\rightarrow B$ is a function, we define the \textit{singular cone
}$\psi_{v,\varepsilon}(f)$ of $f$ to be $\psi_{B(v,\varepsilon)}%
(f):c(A)\rightarrow B(v,\varepsilon)$. In particular, if $\sigma:\Delta
^{n}\longrightarrow B(v,\varepsilon)$ is a continuous singular $n$-simplex,
then we define $\psi_{v,\varepsilon}(\sigma):C(\Delta^{n})\longrightarrow
B(v,\varepsilon)$ by $\psi_{v,\varepsilon}(\sigma)([t,u])=c(t,\sigma
(u)),\;[t,u]\in C(\Delta^{n})$. After identifying $C(\Delta^{n})$ with
$\Delta^{n+1}$, this is a continuous singular $(n+1)$-simplex. With the cone
vertex ordered first, the cone construction satisfies the boundary identity
$\partial\psi_{v,\varepsilon}(\sigma)=\sigma-\psi_{v,\varepsilon}%
(\partial\sigma).$ By linearity, for any singular chain $z$, we have
$\partial\psi_{v,\varepsilon}(z)=z-\psi_{v,\varepsilon}(\partial z).$ In
particular, if $z$ is a cycle, then $\partial z=0$, and hence $\partial
\psi_{v,\varepsilon}(z)=z.$ Regardless of the diameter of $\sigma$, the best
one can generally conclude is that $\psi_{v,\varepsilon}(\sigma)$ is a
$2\varepsilon$-simplex. This \textquotedblleft loss\textquotedblright\ will
have to be controlled via barycentric subdivision when showing that
$\rho_{\varepsilon}$ is surjective at small enough scales.

We prove a somewhat more general statement than Theorem \ref{HM}, using the
following definition. Recall that we denote by $\delta(S)$ the diameter of a
subset $S$ of $X$.

\begin{definition}
\label{fcr}Let $X$ be a metric space. A filling of a skeletal simplex $\sigma$
is defined to be any $\widehat{\sigma}$ such that $r_{\#}(\widehat{\sigma
})=\sigma$. We say that $X$ has a finite contractibility ratio if there is a
pair $(\phi_{F},\rho)$, with $\phi_{F}>0$ and finite $\rho\geq2$ (the
contractibility ratio), such that for every $x\in X$ and every $0<r<\phi_{F}$,
the ball $B(x,r)$ is contractible inside $B(x,\rho r/2)$. That is, the
inclusion $B(x,r)\hookrightarrow B(x,\rho r/2)$ is homotopic in $B(x,\rho r)$
to the constant map at $x$. We call $(\phi_{F},\rho)$ a fill pair for $X$.
\end{definition}

The contractibility ratio controls how much larger one makes a skeletal
simplex when extending it to a continuous simplex via a local contraction. If
$X$ has uniform contractibility radius $\psi_{0}>0$, then via the singular
cone construction and the triangle inequality, $X$ has a fill pair $(\psi
_{0},2)$. Therefore, Theorem \ref{HM2} implies Theorem \ref{HM}.

\begin{theorem}
\label{HM2} Suppose that $X$ is a metric space with fill pair $(\phi_{F}%
,\rho)$. Then $X$ has positive stability radius at least $\rho^{-(n+1)}%
\phi_{F}$. Equivalently, if $0<\varepsilon<\rho^{-(n+1)}\phi_{F},$ then
$\rho_{\varepsilon}:H_{n}(X)\longrightarrow K_{n}^{\varepsilon}(X)$ is an isomorphism.
\end{theorem}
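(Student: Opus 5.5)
We will show that $\rho_\varepsilon=r_\varepsilon^{\ast}\circ\beta_\varepsilon^{-1}$ is surjective and injective by building a filling map $F$ from skeletal chains back to small continuous chains. We build $F$ inductively on dimension $m\le n+1$, so that it is a chain map with $r\circ F=\mathrm{id}$ on $S_m^{\tau}(X)$ for a suitable scale $\tau$.

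\emph{Construction of $F$.} For a $0$-simplex we set $F(\sigma_x)=\sigma_x$. Suppose $F$ has been defined through dimension $m-1$, and let $\sigma$ be an $(m,\tau_m)$-simplex. Then $F(\partial\sigma)$ is a continuous cycle supported in a ball $B(\sigma(v_0),r)$ with $r$ comparable to the diameter bound $\tau_{m-1}$ of the previously filled faces. The fill pair hypothesis says this ball contracts, to the point $\sigma(v_0)$, inside $B(\sigma(v_0),\rho r/2)$. We define $F(\sigma)$ as the singular cone of $F(\partial\sigma)$ with apex ordered first, adjusted so that the vertex $v_0$ maps to $\sigma(v_0)$ and the other vertices keep their values; the boundary identity $\partial\psi(z)=z-\psi(\partial z)$ gives this. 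Since we cone the cycle $F(\partial\sigma)$ rather than $\sigma$ itself, we get $\partial F(\sigma)=F(\partial\sigma)$. The restriction identity $r F(\sigma)=\sigma$ should then follow inductively, because the cone of a face simplex restricts to the skeletal simplex with $v_0$ prepended. Each step multiplies the diameter by at most $\rho$, so $F(\sigma)$ has diameter $<\rho^{m}\tau$. Requiring $\rho^{m}\tau<\phi_F$ for $m\le n+1$ yields the bound $\varepsilon<\rho^{-(n+1)}\phi_F$.

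\emph{Surjectivity.} Given an $\varepsilon$-cycle $c$, the chain $F(c)$ is a continuous cycle, so it defines a class in $H_n(X)$ through $\beta$. Then $\rho_\varepsilon$ of this class is $[rF(c)]_\varepsilon=[c]_\varepsilon$. To land in $C^{\varepsilon}_{S}$ as the definition of $\rho_\varepsilon$ requires, we first barycentrically subdivide $F(c)$; this changes the continuous class by a boundary. We then show that $r(\mathrm{sd}^k F(c))$ and $rF(c)$ agree in $K^\varepsilon_n$. For this we use a skeletal chain homotopy $r\circ(\text{subdivision homotopy } T)$, or more simply Theorem~\ref{bdy}, since small subdivided simplices are close to the original skeletal data.

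\emph{Injectivity.} Suppose $z$ is a continuous $\varepsilon$-small cycle with $r(z)=\partial d$ for some $d\in S_{n+1}^\varepsilon$. Then $F(d)$ is a continuous chain with boundary $F(r(z))$, so it suffices to show that $z-F(r(z))$ is a continuous boundary. We get this from a chain homotopy $D$ between $\mathrm{id}$ and $F\circ r$ on small continuous chains. Define $D(\sigma)$ by coning $\sigma - F r\sigma - D\partial\sigma$ inside a contractible ball; this is again the standard acyclic-carrier construction, and it needs one extra dimension of room, which the exponent $n+1$ provides. Combined with Theorem~\ref{rh}, this gives $[z]=0$ in $H_n(X)$.

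\emph{Main obstacle.} The main obstacle is the bookkeeping of diameters. Each cone or homotopy step inflates diameters by the factor $\rho$, and the scales must stay below $\phi_F$ for all simplices up to dimension $n+1$. The delicate point is that the continuous cycle produced in surjectivity has diameter about $\rho^{n}\varepsilon$, not $\varepsilon$. This forces the subdivision step, and forces the comparison of restrictions before and after subdivision to be done at scale $\varepsilon$ and not at a larger scale. We would handle this by choosing the subdivision chain homotopy so that every intermediate simplex it produces lies inside a simplex of $F(c)$. Its restriction then has vertices drawn from those of $F(c)$, and the task reduces to checking that these restricted simplices have diameter $<\varepsilon$.
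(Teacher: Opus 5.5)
Your architecture is the paper's: a filling chain map $F$ built from iterated singular cones, subdivision to return to $C^{\varepsilon}_{S,n}(X)$, and, for injectivity, a coning chain homotopy $\partial P+P\partial=\mathrm{id}-F\circ r$ on small continuous chains, combined with $F_{n+1}(d)$. The injectivity half is fine and is exactly the paper's $P_q$ argument. It shows $F_*\circ\rho_\varepsilon=\mathrm{id}_{H_n(X)}$.

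One claim needs correcting: $rF(\sigma)=\sigma$ is false on the nose. Coning from $x_0=\sigma(v_0)$ gives $rF(\sigma)=\sum_j(-1)^j[x_0,rF(\sigma^j)]$. For $j\geq 1$ these terms are degenerate simplices $[x_0,x_0,\ldots]$, which are nonzero generators of $S^{\varepsilon}_m(X)$; already $rF([x_0,x_1])=[x_0,x_1]-[x_0,x_0]$. This is repairable. All vertices of $rF(\sigma)$ lie in the finite set $\sigma(V^m)$ of diameter $<\varepsilon$, whose skeletal complex is acyclic, so acyclic carriers give $rF\simeq\mathrm{id}$ at scale $\varepsilon$.

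The genuine gap is the step you flag yourself: proving $[r(\mathrm{sd}^kF(c))]_\varepsilon=[c]_\varepsilon$ at scale $\varepsilon$. Your proposed fix fails. A simplex $\tau$ of $F(c)$ has its vertices close together, but its image can have diameter up to about $\rho^{n}\varepsilon$. The subdivision homotopy joins its vertices to images of barycenters: already $T\tau$ contains $\tau_\#[b,v_0,\ldots,v_n]$, which restricts to $[\tau(b),\tau(v_0),\ldots,\tau(v_n)]$. So the restricted simplices have diameter comparable to $\operatorname{diam}\tau(\Delta^n)$, and knowing they lie inside a simplex of $F(c)$ gives no better bound. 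Theorem~\ref{bdy} is no substitute: it only gives equality in $K_n^{\varepsilon+\delta}(X)$, and you have not exhibited the required pairing of terms.

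Nor can you borrow the paper's step. Its homotopy $B=r_\#\circ D\circ F$ uses $D$ with values in $C^{\rho^{n+1}\varepsilon}_{S,n+1}(X)$, so $B$ lands in $S^{\rho^{n+1}\varepsilon}_{n+1}(X)$ and has the same defect.

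What the argument (yours or the paper's) actually proves is weaker than the theorem. It shows that $\rho_\varepsilon$ is split injective, that $K_n^\varepsilon(X)=H_n^\varepsilon(X)\oplus\ker F_*$, and that $\ker F_*\subseteq\ker j_{\rho^{n+1}\varepsilon,\varepsilon}$. In other words, phantom classes at scale $\varepsilon$ die at the larger scale, but they are not shown to vanish. Closing this needs a new ingredient. One example is fillings with $\operatorname{diam}(\operatorname{im}F(\sigma))<\varepsilon$, as in the CBA or Hausmann-convexity setting; then $F$ lands in $C^{\varepsilon}_{S,n}(X)$ and surjectivity follows at once from $rF\simeq\mathrm{id}$. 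Contractibility of balls alone only produces continuous fillings, and their subdivided restrictions reproduce the same comparison problem.
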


\begin{proof}
We begin by defining a \textquotedblleft filling chain map\textquotedblright%
\ $F_{n}^{\varepsilon}:S_{n}^{\varepsilon}(X)\rightarrow C_{n}^{\rho
^{n+1}\varepsilon}(X)$ as follows. When there is no confusion, we eliminate
the superscript $\varepsilon$ and/or subscript $n$. For any skeletal simplex
$\sigma$, let $v_{\sigma}$ be any point such that the image of $\sigma$ lies
in the open ball $B(v_{\sigma},\varepsilon)$. We make no effort concerning
continuity of the map $\sigma\mapsto v_{\sigma}$ (which is generally
impossible anyway). We can get away with this, roughly speaking, because with
skeletal homology, \textquotedblleft continuity at relatively small scales is
not important\textquotedblright. Let $F_{0}(\sigma)=\sigma$, and iteratively
define $F_{i}(\sigma)=\psi_{v_{\sigma},\rho^{i}\varepsilon}(F_{i-1}%
(\partial\sigma))$. By construction, the image of $F_{i}(\sigma)$ lies in
$B(v_{\sigma},\rho^{i}\varepsilon)$, and therefore $\operatorname{diam}%
\bigl(\operatorname{im}F_{i}(\sigma)\bigr)<\rho^{i+1}\varepsilon$. Since
$F_{i-1}$ is a chain map, we have $\partial F_{i-1}(\partial\sigma
)=F_{i-2}(\partial^{2}\sigma)=0.$ Thus $F_{i-1}(\partial\sigma)$ is a cycle.
Therefore, using the cone-boundary identity, $\partial\psi_{v_{\sigma}%
,\rho^{i}\varepsilon}(z)=z-\psi_{v_{\sigma},\rho^{i}\varepsilon}(\partial z)$,
with $z=F_{i-1}(\partial\sigma)$, we obtain
\begin{equation}
\partial F_{i}(\sigma)=\partial\psi_{v_{\sigma},\rho^{i}\varepsilon}%
(F_{i-1}(\partial\sigma))=F_{i-1}(\partial\sigma) \label{three}%
\end{equation}
and therefore the linear extension of $F_{i}$ is a chain map, which we will
usually denote simply by $F$.

Since the simplices in $C_{S,q}^{\rho^{n+1}\varepsilon}(X)$ are continuous,
the usual barycentric subdivision operator and chain homotopy restrict to maps
$S:C_{S,q}^{\rho^{n+1}\varepsilon}(X) \longrightarrow C_{S,q}^{\rho
^{n+1}\varepsilon}(X) $ and $T:C_{S,q}^{\rho^{n+1}\varepsilon}(X)
\longrightarrow C_{S,q+1}^{\rho^{n+1}\varepsilon}(X), $ satisfying $\partial
T+T\partial=\mathbb{I}-S. $ For $m\geq0$, let $D_{m}=\sum_{0\leq i<m}TS^{i}. $
For each singular $q$-simplex $\sigma\in C_{S,q}^{\rho^{n+1}\varepsilon}(X)$,
let $m(\sigma)$ be the least nonnegative integer such that $S^{m(\sigma
)}\sigma\in C_{S,q}^{\varepsilon}(X), $ and define $D(\sigma)=D_{m(\sigma
)}(\sigma), $ extending linearly. Thus $D:C_{S,q}^{\rho^{n+1}\varepsilon}(X)
\longrightarrow C_{S,q+1}^{\rho^{n+1}\varepsilon}(X). $ Define $R=\mathbb{I}%
-\partial D-D\partial. $ As in the proof of Proposition~2.21 of \cite{H}, $R$
is a chain map whose image lies in $C_{S,n}^{\varepsilon}(X)$. Hence we may
regard $R:C_{S,n}^{\rho^{n+1}\varepsilon}(X) \longrightarrow C_{S,n}%
^{\varepsilon}(X). $%

\[
\begin{tikzcd}[column sep=2.0em, row sep=2.0em]
S_n^\varepsilon(X)
\arrow[r, "F"]
&
C_{S,n}^{\rho^{n+1}\varepsilon}(X)
\arrow[r, "r_\#"]
\arrow[d, "R"']
&
S_n^{\rho^{n+1}\varepsilon}(X)
\\
&
C_{S,n}^{\varepsilon}(X)
\arrow[r, "r_\#"]
&
S_n^\varepsilon(X)
\arrow[u, hook]
\end{tikzcd}
\]
Note that the filling chain map $F$ does not add any vertices to the chains in
$S_{n}^{\varepsilon}(X)$. Therefore, $r_{\#}(F(\sigma))=\sigma$, meaning that
$r_{\#}\circ F:S_{n}^{\varepsilon}(X)\rightarrow S_{n}^{\varepsilon}(X)$ is
the identity map. By definition, the image of the induced map $r_{\ast}$ of
the bottom $r_{\#}$ is $H_{n}^{\varepsilon}(X)$. Therefore, we have the
following diagram of induced maps:
\[
\begin{tikzcd}[column sep=2.0em, row sep=2.0em]
K_n^\varepsilon(X)
\arrow[r, "F_\ast"]
&
H_{S,n}^{\rho^{n+1}\varepsilon}(X)
\arrow[r, "r_\ast"]
\arrow[d, "R_\ast"']
&
K_n^\varepsilon(X)
\\
&
H_{S,n}^{\varepsilon}(X)
\arrow[r, "r_\ast"]
&
H_n^\varepsilon(X)
\arrow[u, hook]
\end{tikzcd}
\]

Define a chain homotopy $B=r_{\#}\circ D\circ F:S_{n}^{\varepsilon
}(X)\rightarrow S_{n}^{\varepsilon}(X)$. Then
\[
\partial B+B\partial=\partial\left(  r_{\#}\circ D\circ F\right)  +\left(
r_{\#}\circ D\circ F\right)  \partial
\]%
\[
=r_{\#}\circ\left(  \partial D+D\partial\right)  \circ F=r_{\#}\circ\left(
\mathbb{I}-R\right)  \circ F
\]%
\[
=r_{\#}\circ F-r_{\#}\circ R \circ F
\]
This shows that the induced maps $r_{\ast}\circ F_{\ast}$ and $r_{\ast}\circ
R_{\ast}\circ F_{\ast}$ are the same, and we conclude that $H_{n}%
^{\varepsilon}(X)=K_{n}^{\varepsilon}(X)$.

Now suppose that $[c]\in\ker\rho_{\varepsilon}$, i.e. we may choose $c$ to be
a continuous cycle such that $r_{\#}(c)=\partial d$ for some skeletal
$\varepsilon\,$-chain $d$. Here $c\in C_{S,n}^{\varepsilon}(X)$ and $d\in
S_{n+1}^{\varepsilon}(X).$ For the injectivity argument, the filling
construction used above must be carried out through dimension $n+1$. Thus we
have homomorphisms $F_{q}:S_{q}^{\varepsilon}(X)\longrightarrow C_{q}%
(X),\;0\leq q\leq n+1,$ such that $\partial F_{q}=F_{q-1}\partial$ for $1\leq
q\leq n+1$. Carrying out the preceding inductive filling construction
simultaneously for small continuous simplices, we may choose homomorphisms
$P_{q}:C_{S,q}^{\varepsilon}(X)\longrightarrow C_{q+1}(X),\;0\leq q\leq n,$
with $P_{-1}=0$, such that $\partial P_{q}+P_{q-1}\partial=\operatorname{id}%
-F_{q}r_{\#}.$ Indeed, inductively, $P_{q}(\sigma)$ is obtained by filling the
cycle $\sigma-F_{q}(r_{\#}\sigma)-P_{q-1}(\partial\sigma)$ inside the same
sufficiently small contractible ball.

Since $c$ is a cycle, the above equation gives $\partial P_{n}(c)=c-F_{n}%
(r_{\#}(c)).$ On the other hand, since $r_{\#}(c)=\partial d$, with $F$ we
have $\partial F_{n+1}(d)=F_{n}(\partial d)=F_{n}(r_{\#}(c))$. Adding these
two, we obtain $\partial\bigl(P_{n}(c)+F_{n+1}(d)\bigr)=c-F_{n}(r_{\#}%
(c))+F_{n}(r_{\#}(c))=c.$ Thus $c$ is a continuous singular boundary.
Therefore $[c]=0\in H_{n}(X)$, completing the proof.
\end{proof}

\begin{remark}
Although it may seem like a small difference, replacing uniform local
contractibility by finite contractibility ratio allows Theorem \ref{HM2} to be
applied to many more spaces than Theorem \ref{HM}. For example, any Euclidean
$2$-cone with cone angle less than $\pi/3$ is not uniformly locally
contractible, but has finite contractibility ratio. In \cite{SH2} we will show
that Theorem \ref{HM2} applies to many spaces that have a local cone structure
even though they may not be uniformly locally contractible.
\end{remark}

As far as we know, Theorem \ref{HM} implies qualitatively, and sometimes
quantitatively, all of the main Hausmann-type theorems, considered as
statements about homology. These include the Adamaszek-Adams-Frick Theorem 4.2
of \cite{AA2}, about Riemannian manifolds with uniformly bounded curvature and
a uniform positive lower bound on convexity radius. We note that this theorem
is misstated in the Introduction as Theorem Main (4), which omits the uniform
bound on convexity radius. See condition (i) on p. 605, which is assumed for
Theorem 4.2. Without this assumption, Example \ref{rev} is a counterexample.
There is also Theorem 4.6 in \cite{AM}, which applies to subsets of Euclidean
space of positive reach. But as observed by Federer (Remark 4.15, \cite{Fed}),
subsets of Euclidean space of positive reach are uniformly locally
contractible. As mentioned previously, our theorem applies to CBA($\kappa$)
spaces, in which case the stability radius is the infimum of the radii of
balls in which the triangle comparisons are valid--since in that case the fill
ratio is $1$.

The stronger version of Hausmann's Theorem, that a uniformly locally
contractible metric space is homotopy equivalent to its VR complex at small
enough scales, is still open. The fact that the scale in Theorem \ref{HM}
depends on $n$ leads us to suspect that the answer may be negative.

\section{The Ultradiamond Metric}

Let $\Gamma$ be a metric space having at least two points, with elements
denoted by $\sigma_{i}$, although $\Gamma$ may be uncountable, and metric
denoted by $\left\vert \sigma_{i}-\sigma_{j}\right\vert $. We are using this
notation for the metric because we will extend this metric to an invariant
metric on the free abelian group $C(\Gamma)$ generated by $\Gamma$, in which
subtraction is defined. Graev (\cite{Gr}) defined metrics on (possibly
infinitely generated) free groups--see also a more general definition and
self-contained exposition in \cite{DG}. Our goal is more geometric, and the
main issue with using the Graev metric for our purpose is that it requires one
to extend the metric of the generating set $\Gamma$ to include $0$, typically
and somewhat arbitrarily by simply setting $d(\sigma_{i},0)=1$ for all
$\sigma_{i}\in\Gamma$. The triangle inequality then forces the diameter of
$\Gamma$, somewhat arbitrarily, to be at most $2$. This is fine for
topological considerations, but we do have to ask what it means for a skeletal
chain $c$ to be \textquotedblleft close to $0~$\textquotedblright? The most
geometrically sensible answer is that $c$ can be expressed with an even number
of (signed) generators, which can be divided into differences of pairs of
simplices that are close to one another in the uniform metric. That is,
geometrically speaking, the chain \textquotedblleft folds back close to
itself\textquotedblright. Since skeletal simplices can never be expressed in
this way, their distance to $0$ should be infinite. While the metric that we
define can be considered as a modification of the Graev metric, rather than
reviewing and modifying the original construction, we will simply start from
scratch, giving a more geometric approach that is also more readily generalizable.

Recall that the traditional Cayley graph of $C(\Gamma)$ is simply a
\textquotedblleft square lattice\textquotedblright, with vertex set equal to
$C(\Gamma)$ and an edge of length 1 joining any two vertices if and only if
they differ by an element of $\Gamma$. One then takes the metric on
$C(\Gamma)$ induced by the length metric: the distance between vertices is the
length of the shortest edge path joining them. In this case, $\Gamma\subset
C(\Gamma)$ simply has the discrete metric with non-zero values equal to $2$.
Any prior metric on $\Gamma$ is lost. To preserve the metric of $\Gamma$ we
will instead add edges between vertices \textit{whose difference is a
difference of generators} $\sigma_{i}-\sigma_{j}$, and the length of the edge
will be $\left\vert \sigma_{i}-\sigma_{j}\right\vert $. For this construction,
keep in mind that the element $0\in C(\Gamma)$ and the negatives $-\sigma_{j}$
of elements of $\Gamma$ are explicitly not included in $\Gamma$; they only
appear formally as elements of $C(\Gamma)$.

We will need an expression in addition to the standard unique expression
$c=\sum n_{k}\sigma_{k}$, which \textquotedblleft breaks apart the
coefficients\textquotedblright. That is, each term $n_{k}\sigma_{k}$ is
replaced by $\sum_{k=1}^{n_{k}}s_{k}\sigma_{k}$, where $s_{k}=\frac{n_{k}%
}{\left\vert n_{k}\right\vert }$. Since individual generators may be repeated
in this sum, we will use double subscripts, writing
\[
c=\sum_{k=1}^{N(c)}s_{i_{k}}\sigma_{i_{k}}%
\]
for $c\neq0$. It may be possible that $\sigma_{i_{j}}=\sigma_{i_{k}}$ when
$j\neq k$, but when this is true, $s_{i_{j}}=s_{i_{k}}$ . This
\textit{expanded expression of }$c$ is uniquely determined up to permutation
of terms. For consistency below, we define $N(0)=0$ and the expanded
expression of $0$ will be the empty sum. Each term with $s_{i_{k}}=1$ will be
called a \textquotedblleft positive term\textquotedblright\ and each with
$s_{i_{k}}=-1$ will be called a \textquotedblleft negative
term\textquotedblright. Let $N^{+}(c)$ denote the number of positive terms in
the expanded expression of $c$ and $N^{-}(c)$ denote the number of negative
terms--so $N(c)=N^{+}(c)+N^{-}(c)$.

\begin{definition}
\label{udm}Let $\Gamma$ be a metric space. The \textit{diamond graph
}$D(\Gamma)$ has vertex set $C(\Gamma)$ and has an edge of length $\left\vert
\sigma_{i}-\sigma_{j}\right\vert $ added between $v$ and $w$ if and only if
$v-w=\pm\left(  \sigma_{i}-\sigma_{j}\right)  $ for some $\sigma_{i}%
,\sigma_{j}\in\Gamma$. The length of an edge path is simply equal to the sum
of the lengths of the edges. The ultralength of an edge path is the length of
the longest edge in the path. The diamond (resp. ultradiamond) metric is the
metric on $C(\Gamma)$ induced by the length (resp. ultralength) metric on
$D(\Gamma)$, i.e. for $c_{1},c_{2}\in C(\Gamma)$ the distance (resp.
ultradistance) between $c_{1}$ and $c_{2}$ is defined to be the infimum of the
lengths (resp. ultralengths) of edge paths joining vertices $c_{1}$ and
$c_{2}$ in $D(\Gamma)$. If $c_{1}$ and $c_{2}$ are not joined by an edge path,
the distance between them is infinite.
\end{definition}

\begin{remark}
In this paper, we will be exclusively interested in the ultradiamond metric,
which we will denote by $\left\vert c_{1}-c_{2}\right\vert $. Many of the
arguments for the ultradiamond metric also hold for the diamond metric, as
well as for many possible variations on this construction (see Remark
\ref{vari}). Note that there are no edges connecting $0$ and elements of
$\pm\Gamma$, as there would be in the Cayley graph. All edges starting at $0$
end at a difference $\pm\left(  \sigma_{i}-\sigma_{j}\right)  $, in effect
\textquotedblleft bypassing\textquotedblright\ what would have been an edge
path of length $2$ in the Cayley graph. The term \textquotedblleft diamond
graph\textquotedblright\ refers to the fact that the four Cayley edges (if
they existed) with vertices $v,w,v+\sigma_{j},v-\sigma_{i}$ would form a
square, and in the diamond graph an edge is added between opposite vertices
$v$ and $w$. If the length of the diagonal edge were less than $2$ (as would
be true if the diameter of $\Gamma$ were less than $2$), this imaginary square
metrically becomes a rhombus, or \textquotedblleft diamond\textquotedblright.
\end{remark}

\begin{notation}
\label{jud}Until we have proved invariance below, we will use square brackets
to delineate expressions in the distance. For example, $\left\vert [\sigma
_{i}+\sigma_{j}]-[\sigma_{j}+\sigma_{k}]\right\vert $ refers to the distance
between $\sigma_{i}+\sigma_{j}$ and $\sigma_{j}+\sigma_{k}$, and until we have
proved invariance we may not \textquotedblleft simplify\textquotedblright%
\ this expression to $\left\vert \sigma_{i}-\sigma_{k}\right\vert $. We will
refer to the elements of $C(\Gamma)$ as \textquotedblleft
chains\textquotedblright, since we will ultimately let $\Gamma$ be the space
of skeletal simplices in a metric space, with the uniform metric.
\end{notation}

\begin{theorem}
\label{meth}For any metric space $\Gamma$, the ultradiamond metric is an
invariant ultrametric on $C(\Gamma)$. In particular, $C(\Gamma)$ is a
topological group with respect to the metric topology. Moreover,

\begin{enumerate}
\item If $\left\vert c_{1}-c_{2}\right\vert <\infty$ then the expanded
expression of $c_{1}-c_{2}$ may be written (after some permutation)
\begin{equation}
c_{1}-c_{2}=%
{\textstyle\sum_{k}}
{}\left(  \sigma_{i_{k}}-\sigma_{j_{k}}\right)  \label{metr}%
\end{equation}
with $\left\vert c_{1}-c_{2}\right\vert =\max\{\left\vert \sigma_{i_{k}%
}-\sigma_{j_{k}}\right\vert \}$ .

\item $\left\vert c_{1}-c_{2}\right\vert <\infty$ if and only if $N^{+}%
(c_{1})+N^{+}(c_{2})=N^{-}(c_{1})+N^{-}(c_{2})$.

\item The set $F(\Gamma)=\{c\in C(\Gamma):N^{+}(c)=N^{-}(c)\}$ is an open and
closed subgroup of $C(\Gamma)$ that is equal to the set of all elements of
$C(\Gamma)$ of finite distance from $0$.

\item The function $\eta:C(\Gamma)\rightarrow\mathbb{Z}$ defined by
$\eta(c)=N^{+}(c)-N^{-}(c)$ is a surjective homomorphism with kernel
$F(\Gamma)$; that is, $C(\Gamma)/F(\Gamma)=\mathbb{Z}$. Accordingly we will
denote the coset $\eta^{-1}(i)$ by $F_{i}(\Gamma)$.

\item The inclusion of $\Gamma$ is an isometric embedding onto the set
$S_{1}=\{c\in F_{1}:N^{+}(c)=1\}$.
\end{enumerate}
\end{theorem}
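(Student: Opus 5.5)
Invariance comes first, since everything else rests on it. By construction the edge set of $D(\Gamma)$ is translation invariant: $v$ and $w$ are joined by an edge of length $|\sigma_i-\sigma_j|$ if and only if $v+c$ and $w+c$ are, because the condition only involves $v-w$. Translation by $c$ is therefore a graph automorphism preserving edge lengths, hence ultralengths of paths, so the induced distance satisfies $|[c_1+c]-[c_2+c]|=|c_1-c_2|$. Negation $v\mapsto -v$ is also an automorphism, since the condition is symmetric under $\pm$. It follows that $|c_1-c_2|$ depends only on $c_1-c_2$ and that $|c|=|-c|$.

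The strong triangle inequality holds because concatenating two edge paths gives an ultralength equal to the maximum of the two. Positivity, meaning $|c|>0$ for $c\neq0$, needs care, because an infimum over paths could tend to $0$. I would handle this through item 1. Any edge path from $0$ to $c$ writes $c$ as a sum $\sum_m \pm(\sigma_{a_m}-\sigma_{b_m})$. Cancellations between terms can be absorbed using the triangle inequality in $\Gamma$: a term $\sigma_b$ cancelling against $-\sigma_b$ lets me merge the pairs $(\sigma_a-\sigma_b)+(\sigma_b-\sigma_e)$ into the single pair $\sigma_a-\sigma_e$. In the ultradiamond setting this gives $|\sigma_a-\sigma_e|\le|\sigma_a-\sigma_b|+|\sigma_b-\sigma_e|$, which is at most twice the maximum, not the maximum.

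This factor of two is the main obstacle. I expect to resolve it by observing that the infimum is attained combinatorially. For a fixed $c$ with a finite expanded expression, consider the finitely many perfect matchings between the positive and negative terms of $c$. I would show that $|c|$ equals the minimum over these matchings of the maximum of $|\sigma_{i_k}-\sigma_{j_k}|$. The upper bound is clear, because a matching gives a path whose edges each add one pair. For the lower bound, I would argue by induction on path length that a path of ultralength $L$ from $0$ to $c$ can be replaced by one whose intermediate vertices only ever add or cancel terms of $c$. If that fails, I would instead redefine the relevant quantity through matchings and verify directly that it satisfies the ultrametric inequality. Since the minimum is over a finite nonempty set of positive numbers whenever $c\neq0$ (terms of opposite sign in an expanded expression are distinct generators, so every matched pair has positive distance), this yields positivity and also item 1 at once.

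Item 2 follows because each edge preserves $\eta=N^+-N^-$, so a finite distance forces equal $\eta$. Conversely, equal counts give a matching and hence a path. Items 3 and 4 then follow: $\eta$ is a homomorphism that is surjective via $\eta(\sigma)=1$; $F(\Gamma)$ is its kernel, which is exactly the set of points at finite distance from $0$; and $F(\Gamma)$ and its cosets are open and closed because points in distinct cosets are at infinite distance. For item 5, if $c\in S_1$, that is $c\in F_1$ with $N^+(c)=1$, then $N^-(c)=0$ and $c$ is a single generator. For $\sigma_i,\sigma_j\in\Gamma$, the difference $\sigma_i-\sigma_j$ has exactly one matching, so item 1 gives $|\sigma_i-\sigma_j|$ equal to the original distance. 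Finally, an invariant metric makes addition and negation continuous, so $C(\Gamma)$ is a topological group.
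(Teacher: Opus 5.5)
The step that fails is exactly the one you flagged as the main obstacle. You claim that $|c|$ equals the minimum, over perfect matchings of the positive and negative terms of $c$, of the largest matched distance. This is false, so neither your induction on path length nor your fallback can be completed.

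Take $\Gamma=\{a,b,e\}$ with $|a-b|=|b-e|=1$ and $|a-e|=2$. In $D(\Gamma)$ the vertex $a$ is joined to $b$, and $b$ to $e$, by edges of length $1$. So the ultradistance from $a$ to $e$ is at most $1$, while the only matching of $a-e$ has value $2$. Your fallback quantity $m$ is not an ultrametric either, since $m(a,e)=2>\max\{m(a,b),m(b,e)\}=1$. More generally, no ultrametric on $C(\Gamma)$ can restrict to a metric on $\Gamma$ that is not already ultrametric. Hence item 1, item 5, and the claim that the ultradiamond metric extends the metric of $\Gamma$ hold only when $\Gamma$ is itself an ultrametric space.

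Positive definiteness fails too. For $\Gamma=[0,1]$, the path $\sigma_0,\sigma_{1/N},\dots,\sigma_1$ has ultralength $1/N$, so $|\sigma_0-\sigma_1|=0$.

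The paper's proof has the same gap. It asserts that removing redundant pairs ``does not increase the ultralength,'' but merging $(\sigma_a-\sigma_b)+(\sigma_b-\sigma_e)$ into $\sigma_a-\sigma_e$ can double it, just as you observed. What the merging argument does establish is your matching formula with the metric of $\Gamma$ replaced by its subdominant ultrametric $u$. Here $u$ is the infimum, over finite chains of generators, of the largest step, and the formula holds because $u(\sigma_a,\sigma_e)\le\max\{u(\sigma_a,\sigma_b),u(\sigma_b,\sigma_e)\}$.

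The rest of your proposal is sound:
\begin{itemize}
\item Translation and negation are length-preserving automorphisms of $D(\Gamma)$. This gives invariance without any appeal to item 1; the paper's invariance argument instead cancels pairs via item 1.
\item Concatenation of paths gives the strong triangle inequality.
\item Invariance gives continuity of the group operations.
\item Your argument for item 2 (edges preserve $\eta$, and a matching yields a path) proves that the distance is finite if and only if $\eta(c_1)=\eta(c_2)$, i.e.\ $N^{+}(c_1)+N^{-}(c_2)=N^{-}(c_1)+N^{+}(c_2)$. That is the correct condition. Item 2 as literally stated has the signs of $c_2$ swapped, and it fails for $c_1=\sigma_a$, $c_2=\sigma_b$.
\item Items 3 and 4 follow as you say.
\end{itemize}
For general $\Gamma$, however, all of this yields only an invariant pseudo-ultrametric, not an ultrametric satisfying items 1 and 5.
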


\begin{proof}
We will delay checking the metric properties until we have proved the first
two numbered statements. Once this is done, the fact that the metric is
invariant, meaning that translation is an isometry, immediately implies that
$C(\Gamma)$ is a topological group with respect to the metric topology (and
negation) is also an isometry.

Any edge path from $c_{1}$ to $c_{2}$ corresponds to a sum $c_{2}-c_{1}=%
{\textstyle\sum_{k}}
{}\left(  \sigma_{i_{k}}-\sigma_{j_{k}}\right)  $, of which all of the
summands in the expanded expression of $c_{2}-c_{1}$ occur. In fact, an edge
path from $c_{1}$ to $c_{2}$ may be described as a sequence of vertices
\begin{equation}
\mathcal{E}=\{c_{1},c_{1}+(\sigma_{i_{1}}-\sigma_{j_{1}}),c_{1}+(\sigma
_{i_{1}}-\sigma_{j_{1}})+(\sigma_{i_{2}}-\sigma_{j_{2}}),\cdots,c_{2}\}
\label{metr2}%
\end{equation}
To be clear, any (directed) edge corresponds to $\pm(\sigma_{a}-\sigma_{b})$,
which we may always replace by $(\sigma_{a}-\sigma_{b})$ or $(\sigma
_{b}-\sigma_{a})$ to remove the $\pm$. Now (\ref{metr2}) is equivalent to
\begin{equation}
c_{2}-c_{1}=(\sigma_{i_{1}}-\sigma_{j_{1}})+(\sigma_{i_{2}}-\sigma_{j_{2}%
})+\cdots+(\sigma_{i_{m}}-\sigma_{j_{m}}) \label{xpf}%
\end{equation}
By uniqueness of the expression of $c_{2}-c_{1}$ in terms of the basis
$\Gamma$, if this expression is not (a permutation of) the expanded expression
of $c_{2}-c_{1}$, it must contain redundant pairs, which may be removed to
obtain the expanded expression. Removing these terms does not increase the
ultralength of the edge path. Therefore, candidates for the shortest edge path
between $c_{1}$ and $c_{2}$ are equivalently expressions of the form
(\ref{xpf}) for some permutation of the positive (or negative) terms in the
expanded expression of $c_{1}-c_{2}$. Therefore $\left\vert c_{1}%
-c_{2}\right\vert =\max\{{}\left\vert \sigma_{i_{k}}-\sigma_{j_{k}}\right\vert
\}$ for some permutation. The second part is clear from (\ref{metr}).

By standard arguments (e.g., for the word metric), the diamond metric is
non-negative, symmetric, and satisfies the triangle inequality. The only
modification is to observe that the ultralength of concatenated edge paths
$E_{1}$ and $E_{2}$ is the maximum of the ultralengths of $E_{1}$ and $E_{2}$,
rather than the sum. Therefore, the ultradiamond metric satisfies $\left\vert
c_{1}-c_{3}\right\vert \leq\max\left\{  \left\vert c_{1}-c_{2}\right\vert
,\left\vert c_{2}-c_{3}\right\vert \right\}  $, as required by the definition
of \textquotedblleft ultrametric\textquotedblright. Since the ultradiamond
metric has infinite values; we also need to observe that if $\left\vert
c_{1}-c_{3}\right\vert =\infty$ then it is not possible for both $\left\vert
c_{1}-c_{2}\right\vert $ and $\left\vert c_{2}-c_{3}\right\vert $ to be
finite, since in that case there would be an edge path from $c_{1}$ to $c_{3}%
$. Positive definiteness is a consequence of the first numbered part, because
if $c_{1}\neq c_{2}$ then their expanded expressions must differ in at least
one term.

For invariance, if $\left\vert c_{1}-c_{2}\right\vert <\varepsilon$, then when
computing $\left\vert [c_{1}+c]-[c_{2}+c]\right\vert $ using (\ref{metr}), we
may cancel the pairs contributed by $c$. Since $\left\vert \left[
c_{1}+c\right]  -\left[  c_{2}+c\right]  \right\vert $ minimizes over all such
expressions this means that $\left\vert \left[  c_{1}+c\right]  -\left[
c_{2}+c\right]  \right\vert \leq\left\vert c_{1}-c_{2}\right\vert $. This
shows that translation is distance non-increasing. But then
\[
\left\vert \left[  c_{1}+c\right]  -\left[  c_{2}+c\right]  \right\vert
\geq\left\vert \left[  \left(  c_{1}+c\right)  +(-c)\right]  -\left[
(c_{2}+c)+(-c)\right]  \right\vert =\left\vert c_{1}-c_{2}\right\vert
\]
showing that the ultradiamond metric is invariant. We may now discontinue use
of the convention of Notation \ref{jud}.

Returning to the third numbered statement, it is clear from (\ref{metr}) that
$F(\Gamma)$ is the subgroup of $C(\Gamma)$ of elements of finite
distance\ from $0$. If $c\in F(\Gamma)$ and, say, $|c-c^{\prime}|<1$, then
since $c^{\prime}$ is of finite distance to $c$, $c^{\prime}\in F(\Gamma)$,
showing that $F(\Gamma)$ is open. Since translation is a homeomorphism, the
non-trivial cosets of $F(\Gamma)$ are open, hence their union, which is the
complement of $F(\Gamma)$, is open. The fourth and fifth statements are now immediate.
\end{proof}

\begin{remark}
We may now interpret $\left\vert c_{1}-c_{2}\right\vert =\varepsilon\neq0$ as
follows, supposing, for example, that $N(c_{2})\geq N(c_{1})$. One is able to
permute the entries of the expanded expressions for $c_{1}$ and $c_{2}$ so
that each $\pm\sigma_{i}$ in the expanded expression for $c_{1}$ is paired
with some $\pm\sigma_{j}$ (same sign) in the expanded expression for $c_{2}$.
The remaining terms in the expanded expression for $c_{2}$ (if any) are even
in number and may themselves be matched in pairs of opposite signs. In the
end, the max of all distances $\left\vert \sigma_{i}-\sigma_{j}\right\vert $
of matched pairs is less than $\varepsilon$. Letting $\left\vert c\right\vert
$ denote $\left\vert c-0\right\vert $, we see that $\left\vert c\right\vert $
is finite if and only if $N(c)$ is even, and the terms in the expanded
expression for $c$ can be matched in pairs of opposite signs so that the max
of the distances of those pairs is equal to $\left\vert c\right\vert $. That
is, as mentioned earlier, $c$ \textquotedblleft folds back close to
itself\textquotedblright.
\end{remark}

\begin{remark}
\label{vari}There are variations on this theme. One may take any
\textquotedblleft length functional\textquotedblright, namely a function $L$
from the set of edge paths into the positive reals that is subadditive with
respect to concatenation. That is, for any edge paths $E_{1}$ and $E_{2}$,
$L(E_{1}\ast E_{2})\leq L(E_{1})+L(E_{2})$. For example, there is also a
\textquotedblleft bottleneck\textquotedblright\ length functional, which is
the inverse of the length of the shortest edge in the path. For a non-abelian
group with metric generating set $\Gamma$ such that no $a,b,c,d\in\Gamma$
satisfy $ab^{-1}=cd^{-1}$, one can, via essentially the same process, produce
a left-invariant metric on $G$ that extends the metric on $\Gamma$. Assuming
the Axiom of Choice, any generating set $\Gamma$ may be reduced to one with
this property via well-ordering.
\end{remark}

\begin{definition}
Let $\Gamma_{\varepsilon}^{n}(X)$ denote the set of skeletal $(n,\varepsilon
)$-simplices, with uniform metric
\begin{equation}
\left\vert \sigma_{1}-\sigma_{2}\right\vert =\max_{k}\{d(\sigma_{1}%
(v_{k}),\sigma_{2}(v_{k}))\} \label{indi}%
\end{equation}

\end{definition}

\begin{lemma}
With the ultradiamond metric, $\partial:S_{n}^{\varepsilon}(X)\rightarrow
S_{n-1}^{\varepsilon}(X)$ is $1$-Lipschitz.
\end{lemma}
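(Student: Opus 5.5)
By invariance of the ultradiamond metric (Theorem \ref{meth}) and linearity of $\partial$, it suffices to show $\left\vert \partial c\right\vert \leq\left\vert c\right\vert$ for every $c\in S_{n}^{\varepsilon}(X)$. Indeed, $\left\vert \partial c_{1}-\partial c_{2}\right\vert =\left\vert \partial(c_{1}-c_{2})\right\vert$, and $\left\vert c_{1}-c_{2}\right\vert$ equals the ultradistance from $c_{1}-c_{2}$ to $0$. If $\left\vert c\right\vert =\infty$ there is nothing to prove. So I assume $\left\vert c\right\vert <\infty$. By part 1 of Theorem \ref{meth}, the expanded expression of $c$ can then be written as $c=\sum_{k}(\sigma_{i_{k}}-\sigma_{j_{k}})$ with $\left\vert c\right\vert =\max_{k}\left\vert \sigma_{i_{k}}-\sigma_{j_{k}}\right\vert$.

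The key local step is to estimate a single pair. Using linearity,
\[
\partial(\sigma_{i_{k}}-\sigma_{j_{k}})=\sum_{l=0}^{n}(-1)^{l}\left(  \sigma_{i_{k}}^{l}-\sigma_{j_{k}}^{l}\right).
\]
Each face $\sigma^{l}$ is the restriction of $\sigma$ to the same vertex subset, precomposed with the same $i_{l}$. By (\ref{indi}) this gives $\left\vert \sigma_{i_{k}}^{l}-\sigma_{j_{k}}^{l}\right\vert \leq\left\vert \sigma_{i_{k}}-\sigma_{j_{k}}\right\vert$, since the maximum over fewer vertices is no larger. The term with sign $(-1)^{l}$ is either $\sigma_{i_{k}}^{l}-\sigma_{j_{k}}^{l}$ or its negative. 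Either way it is a single edge of the diamond graph, of length at most $\left\vert \sigma_{i_{k}}-\sigma_{j_{k}}\right\vert$. The faces are genuinely $(n-1,\varepsilon)$-simplices, so these edges lie in $D(\Gamma_{\varepsilon}^{n-1}(X))$.

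Next I concatenate. Writing $\partial c=\sum_{k}\sum_{l}\pm(\sigma_{i_{k}}^{l}-\sigma_{j_{k}}^{l})$ gives an explicit edge path from $0$ to $\partial c$ in $D(\Gamma_{\varepsilon}^{n-1}(X))$: add one signed difference at a time, as in (\ref{metr2}). Its ultralength is the maximum of the edge lengths, hence at most $\max_{k}\left\vert \sigma_{i_{k}}-\sigma_{j_{k}}\right\vert =\left\vert c\right\vert$. Since the ultradiamond distance is the infimum over edge paths, $\left\vert \partial c\right\vert \leq\left\vert c\right\vert$.

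The only subtle point is that this sum need not be the expanded expression of $\partial c$, because of cancellations and because some differences may be zero, when two faces coincide. I handle this directly. An edge path is allowed to be any sequence of edges, with no reducedness required in Definition \ref{udm}. Zero differences are simply omitted; alternatively they are loops of length $0$, which do not increase ultralength. So the path still ends at $\partial c$, and the infimum bounds its distance from $0$. The proof of Theorem \ref{meth} likewise shows that redundant pairs may be removed without increasing ultralength.
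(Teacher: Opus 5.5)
Your proof is correct, and its key step is the same as the paper's. For one pair of generators, $\partial(\sigma_{1}-\sigma_{2})=\sum_{l}(-1)^{l}(\sigma_{1}^{l}-\sigma_{2}^{l})$ is a string of diamond-graph edges, each of uniform length at most $|\sigma_{1}-\sigma_{2}|$. The difference is scope. The paper's proof only checks $|\partial\sigma_{1}-\partial\sigma_{2}|\leq|\sigma_{1}-\sigma_{2}|$ for single simplices and leaves the passage to arbitrary chains implicit. You carry that passage out: translation invariance reduces the claim to $|\partial c|\leq|c|$, and applying $\partial$ edge by edge to a path from $0$ to $c$ gives a path from $0$ to $\partial c$ of no larger ultralength. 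That is what the statement on all of $S_{n}^{\varepsilon}(X)$ actually needs, so yours is the more complete version.

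One change I would make: do not route the argument through part 1 of Theorem \ref{meth}. Your edge-by-edge argument works for \emph{any} edge path from $0$ to $c$. So take one of ultralength less than $|c|+\eta$ and let $\eta\to0$. This is more than cosmetic, because part 1 is not valid for the ultradiamond metric unless the metric on $\Gamma$ is itself an ultrametric. For example, if $|\tau_{1}-\tau_{2}|=|\tau_{2}-\tau_{3}|=1$ and $|\tau_{1}-\tau_{3}|=2$, the edge path $\tau_{1}\to\tau_{2}\to\tau_{3}$ has ultralength $1$. Yet the only pairing of the expanded expression of $\tau_{3}-\tau_{1}$ has length $2$. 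In general, the infimum defining $|c|$ need not be realized by a pairing of the expanded expression, so the equality $|c|=\max_{k}|\sigma_{i_{k}}-\sigma_{j_{k}}|$ you quote cannot be relied on. Your argument only needs the inequality supplied by an arbitrary near-optimal path, and with that substitution it is self-contained.
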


\begin{proof}
Let $\sigma_{1},\sigma_{2}$ be skeletal $n$-simplices. Then
\[
\partial\sigma_{1}-\partial\sigma_{2}=\sum_{j=0}^{n}(-1)^{j}\left(  \sigma
_{1}^{j}-\sigma_{2}^{j}\right)
\]
Therefore, among the maxima of which $\left\vert \partial\sigma_{1}%
-\partial\sigma_{2}\right\vert $ is the minimum, is
\[
\max_{j}\{\left\vert \sigma_{1}^{j}-\sigma_{2}^{j}\right\vert \}=\max
_{j}\{\max_{k\neq j}\{d(\sigma_{1}(v_{k}),\sigma_{2}(v_{k})\}\}=\left\vert
\sigma_{1}-\sigma_{2}\right\vert
\]

\end{proof}

\begin{remark}
\label{cyclic}When coefficients are in finite cyclic group $G=\mathbb{Z}/(m)$
(which is also a ring), the following modifications are necessary. In the free
module $C_{G}(\Gamma)$, every non-zero element is uniquely a finite sum
$c=\sum_{k=1}^{N(c)}\sigma_{i_{k}}$, where each individual simplex in the sum
may be repeated up to $m-1$ times, which we will also refer to as the
\textquotedblleft expanded expression\textquotedblright. The diamond graph
$D_{G}(\Gamma)$ is defined to have vertex set $C_{G}(\Gamma)$, with an edge
added between $v$ and $w$ of length $\left\vert \sigma_{i}-\sigma
_{j}\right\vert $ if and only if the expanded expression of $v-w$ is
$\pm(\sigma_{i}-\sigma_{j})$. The main difference here is that the expanded
expression is obtained by taking the formal sum $v-w$ \textquotedblleft mod
$m$\textquotedblright, meaning if a simplex $\sigma_{i_{k}}$ occurs $M>m-1$
times, the occurrence of $\sigma_{i_{k}}$ in the sum is reduced to $M$
$(\operatorname{mod}m)$. Now Theorem \ref{meth}.1 is true as stated, and
Theorem \ref{meth}.2 is restated as $\left\vert c_{1}-c_{2}\right\vert $ is
finite if and only if $N(c_{1})=N(c_{2})$.
\end{remark}

\section{Stability}

In this last section we will prove Theorem \ref{bdy}. The proof fittingly uses
a discrete variant of the \textquotedblleft prism
construction\textquotedblright, which is used to show the classical theorem
that homotopic maps induce the same homomorphism on homology (e.g., Theorem
2.10 in \cite{H}). For any generators $\sigma_{a},\sigma_{b}$ of
$S_{n}^{\varepsilon}(X)$, with $\left\vert \sigma_{a}-\sigma_{b}\right\vert
<\delta$, define $F_{\sigma_{a},\sigma_{b}}:\{0,1\}\times V^{n}\rightarrow X$
by $F_{\sigma_{a},\sigma_{b}}(0,v_{i})=\sigma_{a}(v_{i})$ and $F_{\sigma
_{a},\sigma_{b}}(1,v_{i})=\sigma_{b}(v_{i})$. By the triangle inequality, the
diameter of the image of $F_{\sigma_{a},\sigma_{b}}$, and therefore the
diameter of the image of its restriction to any subset of $\{0,1\}\times
V^{n}$ is less than $\varepsilon+\delta$.

Define $P_{n}:S_{n}^{\varepsilon}(X)\times S_{n}^{\varepsilon}(X)\rightarrow
S_{n+1}^{\varepsilon+\delta}(X)$ by
\[
P_{n}(\sigma_{a},\sigma_{b})=\sum_{r=0}^{n}(-1)^{r}\left(  F_{\sigma
_{a},\sigma_{b}}\mathbb{\mid}\{v_{0},...,v_{r},w_{r},...,w_{n}\}\right)
\]
and extend linearly to $S_{n}^{\varepsilon}(X)\times S_{n}^{\varepsilon}(X)$.
Here $v_{j}$ are the vertices of $0\times\Delta^{n}$ and $w_{j}$ are the
vertices of $1\times\Delta^{n}$. Now
\begin{equation}
\partial P_{n}(\sigma_{a},\sigma_{b})=\sum_{r}(-1)^{r}\partial\left(
F_{\sigma_{a},\sigma_{b}}\mathbb{\mid}\{v_{0},...,v_{r},w_{r},...,w_{n}%
\}\right)  \label{res}%
\end{equation}%
\[
=\sum_{k\leq m}(-1)^{k}(-1)^{m}F_{\sigma_{a},\sigma_{b}}\mid\{v_{0}%
,\ldots,\widehat{v_{m}},\ldots,v_{k},w_{k},\ldots,w_{n}\}
\]%
\[
+\sum_{m\geq k}(-1)^{k}(-1)^{m+1}F_{\sigma_{a},\sigma_{b}}\mid\{v_{0}%
,\ldots,v_{k},w_{k},\ldots,\widehat{w_{m}},\ldots,w_{n}\}
\]
The terms with $k=m$ cancel in pairs, except the first and last, which are
$F_{\sigma_{a},\sigma_{b}}\mid\{\widehat{v_{0}},w_{0},\ldots,w_{n}%
\}=\sigma_{a}$ and $-F_{\sigma_{a},\sigma_{b}}\mid\{v_{0},\ldots
,v_{n},\widehat{w_{n}}\}=-\sigma_{b}$.

Now
\[
P_{n-1}(\partial\sigma_{a},\partial\sigma_{b})=\sum_{k=0}^{n-1}(-1)^{k}%
P_{n-1}\left(  \sigma_{a}^{k},\sigma_{b}^{k}\right)
\]%
\[
=\sum_{k<m}(-1)^{k}(-1)^{m}F_{\sigma_{a}^{k},\sigma_{b}^{k}}\mathbb{\mid
}\{v_{0},...,v_{k},w_{k},\ldots,\widehat{w_{m}},...,w_{n}\}
\]%
\[
+\sum_{k<m}(-1)^{k-1}(-1)^{m}F_{\sigma_{a}^{k},\sigma_{b}^{k}}\mathbb{\mid
}\{v_{0},...,\widehat{v_{m}},\ldots,v_{k},w_{k},...,w_{n}\}
\]
Using (\ref{res}) we have that
\begin{equation}
\partial P_{n}(\sigma_{a},\sigma_{b})=\sigma_{a}-\sigma_{b}+P_{n-1}\left(
\partial\sigma_{a},\partial\sigma_{b}\right)  \label{repeat}%
\end{equation}

Using (\ref{repeat}),
\begin{equation}
c_{1}-c_{2}=\sum_{k}s_{k}\left(  \sigma_{i_{k}}-\sigma_{j_{k}}\right)
=\sum_{k}s_{k}\left(  \partial P_{_{n}}\left(  \sigma_{i_{k}},\sigma_{j_{k}%
}\right)  -P_{n-1}(\partial\sigma_{i_{k}},\partial\sigma_{j_{k}})\right)
\label{start}%
\end{equation}%
\[
=\partial\left(  \sum_{k}s_{k}P\left(  \sigma_{i_{k}},\sigma_{j_{k}}\right)
\right)  -\sum_{k}s_{k}P_{n-1}(\partial\sigma_{i_{k}},\partial\sigma_{j_{k}})
\]
Since $c_{1}$ and $c_{2}$ are cycles and $P_{n-1}$ is linear,
\[
\sum_{k}s_{k}P_{n-1}(\partial\sigma_{i_{k}},\partial\sigma_{j_{k}}%
)=P_{n-1}\left(  \sum_{k}s_{k}\partial\sigma_{i_{k}},\sum_{k}s_{k}%
\partial\sigma_{j_{k}}\right)
\]%
\[
=P_{n-1}\left(  \partial\left(  \sum_{k}s_{k}\sigma_{i_{k}}\right)
,\partial\left(  \sum_{k}s_{k}\sigma_{i_{k}}\right)  \right)
\]%
\[
=P_{n-1}(\partial c_{1},\partial c_{2})=P_{n-1}(0,0)=0
\]

Tracing through the sums shows that%
\[
N(d)=N\left(  \sum_{k}s_{k}P\left(  \sigma_{i_{k}},\sigma_{j_{k}}\right)
\right)  \leq(n+1)(N(c_{1})+N(c_{2}))
\]
as required.

\begin{acknowledgement}
The second author gratefully acknowledges the support of the American
Institute of Mathematics for his participation in the AIM program
\textquotedblleft Discrete and Combinatorial Homotopy
Theory,\textquotedblright held March 13--17, 2023. This program fostered new
research connections and motivated his study of homology at scale. We thank
Ernesto Ugona several valuable discussions, as well as Antonio Rieser and
Nikola Mili\'{c}evi\'{c} for helpful comments.
\end{acknowledgement}

\end{document}